\numberwithin{equation}{section}
\newtheorem{theorem}{Theorem}[section]
\newtheorem{lemma}[theorem]{Lemma}
\newtheorem{proposition}[theorem]{Proposition}
\newtheorem{corollary}[theorem]{Corollary}
\newtheorem{conjecture}[theorem]{Conjecture}
\theoremstyle{definition}
\newtheorem{remark}[theorem]{Remark}
\theoremstyle{remark}
\newcommand{\E}{\mathbb{E}}
\newcommand{\N}{\mathbb{N}}
\renewcommand{\P}{\mathbb{P}}
\newcommand{\R}{\mathbb{R}}
\newcommand{\T}{\mathbb{T}}
\def\bs{\boldsymbol}
\newcommand\bP{\ensuremath{\bs{\mathrm{P}}}}
\newcommand\bE{\ensuremath{\bs{\mathrm{E}}}}
\newcommand{\cC}{{\ensuremath{\mathcal C}} }
\newcommand{\cT}{{\ensuremath{\mathcal T}} }
\newcommand{\weight}{\textup{\textrm{w}}}
\renewcommand{\theta}{\vartheta}
\renewcommand{\rho}{\varrho}
\newcommand{\effR}[3]{{\rm R}_{\rm eff}^{#1}(#2 \leftrightarrow #3)} % Effective resistance
\newcommand{\limn}{\lim\limits_{n \rightarrow \infty}}
\newcommand{\ErdosRenyi}{Erd\H{o}s--R\'enyi }
\newcommand{\diam}{\operatorname{diam}}
\newcommand{\Exc}{\operatorname{Exc}}
\newcommand{\height}{\operatorname{ht}}
\newcommand{\BP}{\operatorname{BP}}
\newcommand{\TV}{\operatorname{TV}}
\date{\today}
\title{Repeat times and a two-weight UST model}
\author[U. Ambroggio]{Umberto De Ambroggio}
\address{Department of Mathematics\\
National University of Singapore\\
10 Lower Kent Ridge Road, 119076 Singapore
}
\email{umbidea@gmail.com}
\thanks{Email: \href{mailto:umbidea@gmail.com}{\texttt{umbidea@gmail.com}}, \href{mailto:luca.makowiec@uni-leipzig.de}{\texttt{luca.makowiec@uni-leipzig.de}}} %quick fix to add email in front page
\author[L. Makowiec]{Luca Makowiec}
\address{University of Leipzig\\ 
Institute of Mathematics\\
Augustusplatz 10, 04109 Leipzig, Germany
}
\email{luca.makowiec@uni-leipzig.de}
\keywords{disordered system, uniform spanning tree, random graphs}
\subjclass[2010]{Primary: 60K35;  Secondary: 82B41, 82B44, 05C05}
\begin{document}

\begin{abstract}
    We study a model of random weighted uniform spanning trees on the complete graph with $n$ vertices, where each edge is assigned a weight of $n^{1+\gamma}$ with probability $1/n$ and $1$ otherwise. Whenever $\gamma$ is large enough, we prove that the diameter of the resulting tree is typically of order $n^{1/3} \log n$, up to a $\log \log n$ correction. Our approach uses estimates on repeat times for selecting components in a critical Erdős–Rényi graph, as well as concentration bounds on the sums of diameters of these components.
\end{abstract}

\maketitle

%%%%%%%%%%%%%%%%%%%% REAL DOCUMENT START %%%%%%%%%%%%%%%%%%%

%%%%%%%%%%%%%%%%%%%%%% NEW SECTION %%%%%%%%%%%%%%%%%%%%%%%%%
%%%%%%%%%%%%%%%%%%%%%% Introduction %%%%%%%%%%%%%%%%%%%%%%%%%

\section{Introduction}
%REMARK: I think that overall we should just use $K_n$ and not $G$ for the complete graph. 
Let $G = K_n$ be the complete graph on $n$ vertices and, for $\gamma \in \R$, assign random weights
\begin{equation} \label{eq:weight_dist}
    \weight(e) \coloneqq  
    \begin{cases}
        n^{1+\gamma} & \text{with probability } \frac{1}{n}, \\
        1 & \text{otherwise},
    \end{cases}
\end{equation}
to the edges $e$ of $G$. Denote by $\mathbb{T} = \mathbb{T}(G)$ the set of spanning trees (i.e., connected cycle-free subgraphs) of $G$. Given a realization of the weights $(\weight(e))_{e \in E(G)}$, we define the weighted \textit{uniform spanning tree} (UST) measure on $\mathbb{T}(G)$ as
\begin{equation}\label{eq:PomegaT}
    \bP^\weight_G(\cT = T) 
    	:= \frac{1}{Z^{\weight}} \prod_{e \in T} \weight(e)
\end{equation}
with normalization constant
\begin{equation}\label{eq:Zomega}
    Z^{\weight} :=  \sum_{T \in \T} \prod_{e \in T} \weight(e).
\end{equation}
When the weights and the underlying graph are clear, we simply write $\bP_\cT(\cdot)$ for this law. We remark that $\bP_\cT(\cdot)$ is itself a random variable as it is a function of the (random) weights $(\weight(e))_{e \in E(G)}$.

\smallskip

This model is a specific instance of a \textit{random spanning tree in random environment} (RSTRE) as studied in \cite{MSS23, MSS24, K24, Mak24}. For instance, the papers \cite{MSS24, K24} consider the complete graph with weight distribution
\begin{equation*}
    \weight(e) = \exp( - \beta \omega_e),
\end{equation*}
where $(\omega_e)_{e \in E(G)}$ are i.i.d. uniform random variables on $[0,1]$, and study the effect of the parameter $\beta = \beta(n) \geq 0$ on the diameter of typical weighted USTs. Namely, for $\beta \leq n^{1 - o(1)}$ the typical diameter is of order $\sqrt{n}$, the same order as the diameter of the unweighted (i.e., all weights are equal) UST, whereas for $\beta \geq n^{4/3 + o(1)}$ the diameter is of order $n^{1/3}$, the same order of that of the random \textit{minimum spanning tree} (MST). It is conjectured, see \cite[Conjecture 1.3]{MSS24}, that there is some intermediate regime for the choice of parameter $\beta$ such that the diameter smoothly interpolates between the two power laws $\sqrt{n}$ and $n^{1/3}$ corresponding to the UST and MST, respectively.

Furthermore, in \cite{K24} and \cite{Mak24}, the authors study ``local'' properties of the RSTRE depending on the choice of $\beta$. Notably, Theorem~1.3 of \cite{Mak24} shows that there is a sharp transition of the local limit of the RSTRE whenever $\beta = n^{\alpha}$ and $\alpha$ crosses the critical threshold $\alpha_c = 1$: it either agrees with the UST ($\alpha < 1$) or the MST ($\alpha > 1$) local limit. We refer to \cite[Chapter 4]{LP16} for an introduction to USTs, and to the introduction of \cite{ABGM17} for a clear overview of the historical development and main results concerning the MST. See also the introduction of \cite{MSS24} for more background regarding the RSTRE and its relation to the UST and MST. We remark here that our choice of weights can be reparameterized to $\weight(e) = \exp(-\beta \omega_e)$ where $\omega_e$ has the law of minus a Bernoulli random variable with parameter $1/n$, and $\beta = (1+\gamma) \log n$.

\subsection{Main result}

\smallskip

In a similar spirit to the results of \cite{MSS23, MSS24, K24}, we study the diameter of the random spanning tree $\cT$, i.e.,\ the maximum length of a path in $\cT$ connecting vertices of $G$. For now, we only focus on the case when $\gamma$ is large. Our main result is the following. 

\begin{theorem} \label{T:diam_extreme}
    Let $\gamma \geq 5$. For any $\varepsilon > 0$ there exists $A = A(\varepsilon)$ and $n_0 = n_0(\varepsilon)$ such that for $n \geq n_0$
    \begin{equation}
         \widehat{\P} \big( A^{-1} n^{1/3} \log n \leq \diam(\cT) \leq A n^{1/3} \log n \log \log n \big) \geq 1 - \varepsilon,
    \end{equation}
    where $\widehat{\P}(\cdot)$ is the averaged law $\E[\bP_\cT(\cdot)]$.
\end{theorem}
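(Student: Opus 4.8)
The proof rests on a structural description of $\cT$ followed by a delicate analysis of an auxiliary tree.

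\textbf{Step 1: structure of $\cT$.} The heavy edges $\{e:\weight(e)=n^{1+\gamma}\}$ form exactly an \ErdosRenyi graph $G(n,1/n)$, which is critical; denote its components $H_1,\dots,H_m$ with $s_1=|H_1|\ge\dots\ge s_m=|H_m|$. Standard critical-graph estimates give, with probability $\to1$: $m=\Theta(n)$, $s_1=\Theta(n^{2/3})$, the number of components of size $\ge s$ is $\Theta(n\,s^{-3/2})$ for $1\le s\le n^{2/3}$, each $H_i$ has a bounded (for the macroscopic ones) or at most poly-logarithmic excess, and $\diam(H_i)=\Theta(\sqrt{s_i})$ for the macroscopic components while $\diam(H_i)\le C\sqrt{s_i}\log s_i$ for all $i$. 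Any spanning tree $T$ of $K_n$ has weight $(n^{1+\gamma})^{h(T)}$ with $h(T)$ the number of heavy edges of $T$, and since the heavy edges of $T$ form a forest inside the heavy graph one has $h(T)\le n-m$ always. I would then show — this is where $\gamma\ge5$ is used — that $\bP_\cT$ puts mass $1-o(1)$ on $\{h(\cT)=n-m\}$: a tree that ``wastes'' $r$ heavy edges carries the penalty $n^{-(1+\gamma)r}$ relative to the dominant term, while the corresponding spanning-tree count, estimated via the matrix--tree theorem, is at most $n^{Cr}$, so for $\gamma$ large the total correction is a negative power of $n$. Conditionally on the heavy graph and on $\{h(\cT)=n-m\}$ all admissible trees have equal weight, whence $\cT$ splits as: an independent uniform spanning tree $\mathrm{UST}(H_i)$ inside each $H_i$, together with $m-1$ light edges forming a uniform spanning tree of the multigraph $M$ obtained by contracting each $H_i$ and joining $H_i,H_j$ by $s_is_j$ parallel edges.

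\textbf{Step 2: reducing the diameter.} A path in $\cT$ projects to a path $\pi=(H_{i_1},\dots,H_{i_\ell})$ in $\mathrm{UST}(M)$ and inside each $H_{i_k}$ runs along the unique $\mathrm{UST}(H_{i_k})$-path between the vertices where it enters and leaves, so that
\[
  \diam(\cT)\;\asymp\;\max_{\pi}\Big(\ \ell(\pi)+\sum_{k=1}^{\ell(\pi)}\mathrm{dist}_{\mathrm{UST}(H_{i_k})}\!\big(\mathrm{in}_k,\mathrm{out}_k\big)\ \Big)
\]
up to an additive $O\big(\max_i\diam(\mathrm{UST}(H_i))\big)=O\big(n^{1/3}(\log n)^{O(1)}\big)$ from paths confined to one component. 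For the upper bound I would use $\mathrm{dist}_{\mathrm{UST}(H)}(\cdot,\cdot)\le\diam(\mathrm{UST}(H))\le C\sqrt{|H|}\,(\log|H|)^{O(1)}$, valid since $H$ is a tree plus at most poly-logarithmically many edges; for the lower bound, the crossing edges at $H_{i_k}$ are essentially uniformly spread, so $\mathrm{in}_k,\mathrm{out}_k$ are near-uniform in $H_{i_k}$, and the typical distance between two uniform vertices of the UST of a critical component of size $s$ is $\Theta(\sqrt s)$, whence a fixed positive proportion of the large components met along $\pi$ contribute $\gtrsim\sqrt{s_{i_k}}$.

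\textbf{Step 3 (the crux): $\mathrm{UST}(M)$ and a heavy-tailed sum.} Running Wilson's algorithm (or Aldous--Broder) on $M$, the walk moves from $H_i$ to $H_j$ with probability $\propto s_j$: it repeatedly \emph{selects components size-biasedly}. Rooting at $H_1$, the first loop-erased walk is absorbed after $\Theta(n^{1/3})$ steps, as $H_1$ is hit with probability $\asymp s_1/n\asymp n^{-1/3}$ per step; analysing how loop-erasure treats the occasional revisits to the few other macroscopic components — the ``repeat times'' — shows this loop-erased path $P_1$ still has $\ell\asymp n^{1/3}$ super-vertices, whose sizes are asymptotically i.i.d.\ with the size-biased law $\P(s=k)\asymp k^{-3/2}$ on $\{1,\dots,n^{2/3}\}$. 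Controlling the later, progressively shorter loop-erased walks (after $k$ steps the growing tree has $\asymp\sqrt{nk}$ vertices, so subsequent walks attach to near-uniform, not especially deep, vertices) then gives $\diam(\mathrm{UST}(M))=n^{1/3}(\log\log n)^{O(1)}$. The point of the computation is that a size-biased size satisfies $\E\sqrt s\asymp\sum_{k\le n^{2/3}}k^{-1}\asymp\log n$, so along $P_1$,
\[
  \sum_{k=1}^{\ell}\sqrt{s_{i_k}}\;\asymp\;n^{1/3}\log n .
\]
This sum is heavy-tailed — its $j$-th largest term is $\asymp n^{2/3}j^{-2}$, so it is driven by its $O(\log n)$ largest terms — but a truncation-plus-Bernstein estimate shows its ``bulk'' concentrates at $\Theta(n^{1/3}\log n)$ while the large terms add only $O(n^{1/3}\log\log n)$; a union bound over the $\le m^2$ paths in $\mathrm{UST}(M)$ (together with the $\log\log n$-type slack in $\diam(\mathrm{UST}(M))$) accounts for the extra $\log\log n$ in the upper bound. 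Plugging this, and the typical-distance estimate from Step 2 for the lower bound, into the display of Step 2 yields $A^{-1}n^{1/3}\log n\le\diam(\cT)\le A\,n^{1/3}\log n\log\log n$ with probability $\ge1-\varepsilon$, as claimed.

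\textbf{Main obstacle.} The heart of the proof is Step 3: one must understand loop-erased random walk on the size-biased ``highway'' multigraph $M$ precisely enough to (i) keep the length of a longest path in $\mathrm{UST}(M)$ at $n^{1/3+o(1)}$ despite the recursive attachment of $\Theta(n)$ loop-erased walks, and (ii) identify the limiting $k^{-3/2}$ law of the component sizes encountered along such a path, which is exactly what makes $\sum\sqrt{s_{i_k}}$ the relevant quantity; establishing concentration of that heavy-tailed sum uniformly over all paths — the origin of the $\log\log n$ in the upper bound — is the other delicate point. Step 1's combinatorial count for $\gamma\ge5$ and the distance bounds for USTs of critical components are comparatively routine but still require care.
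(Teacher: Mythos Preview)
Your proposal correctly identifies the architecture of the argument: contract the heavy components, observe that the random walk on the contracted multigraph $M$ selects components size-biasedly, use that the relevant time scale on $M$ is $n^{1/3}$, and that each visited component contributes $\asymp\sqrt{s}$ with $\E[\sqrt{s}]\asymp\log n$ under the size-biased law. This is exactly the paper's picture. However, on several tactical points the paper takes a cleaner route, and two of your steps hide difficulties that the paper avoids entirely.

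\textbf{Step 1.} The paper does not count spanning trees. Kirchhoff's formula plus a crude resistance bound (any two vertices in the same heavy component are joined by a heavy path of length $\le n$) shows directly that with probability $\ge 1-n^{4-\gamma}$ every tree path between two vertices of the same component stays inside it; this is where $\gamma\ge 5$ enters, and it yields the decomposition immediately, with no matrix--tree estimate needed.

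\textbf{Lower bound.} The paper uses Aldous--Broder rather than Wilson, and this buys a real simplification: run the walk from a uniform vertex and stop at the \emph{first repeat time} $t_1$ (the first time a component is seen twice). Until $t_1$ there is nothing to erase, so the walk path up to time $t_1-1$ is literally a path in $\cT'$, and the components $S_1,\dots,S_{t_1-1}$ along it are genuinely i.i.d.\ size-biased. The paper then combines Aldous's excursion description of the ordered component sizes with the Camarri--Pitman result on repeat times for non-uniform birthday problems to obtain $t_1\ge r\,n^{1/3}$ with high probability. Your LERW route would have to justify both that the loop-erased path still has length $\asymp n^{1/3}$ \emph{and} that the component sizes along it are ``asymptotically i.i.d.\ size-biased'' despite the conditioning inherent in erasure; the paper sidesteps this entirely. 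Concentration of $\sum d_{S_i}(u_i,v_i)$ is obtained not by Bernstein but by truncating each term at $n^{1/20}$ and applying Paley--Zygmund twice (once under $\bP_S$, once under $\P$).

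\textbf{Upper bound.} Your plan to first bound $\diam(\mathrm{UST}(M))$ via the full Wilson construction and then take a union over all $\le m^2$ tree paths has a genuine gap: the component sizes along a path in $\mathrm{UST}(M)$ are \emph{not} independent of the tree, so one cannot directly apply Bernstein to $\sum_k\sqrt{s_{i_k}}$ along a fixed tree path. The paper never looks at tree paths. Instead it sets $\mathcal{L}=\bigcup_{i\le (\log n)^3}\cC_i$, shows $|\mathcal{L}|\ge n^{2/3}\log n$, and bounds $d_{\cT}(u,\mathcal{L})$ for every $u$ by the Aldous--Broder walk from $u$ until it hits $\mathcal{L}$: this walk has i.i.d.\ steps, hits $\mathcal{L}$ within $O(n^{1/3})$ steps even after a union over all $n$ vertices (the per-step hitting probability is $\ge n^{-1/3}\log n$), and the sum of truncated diameters along it is controlled by Bernstein applied to a sum of i.i.d.\ bounded variables. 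The remaining piece is $d_\cT(\cC_i,\cC_1)$ for the $(\log n)^3$ components in $\mathcal{L}$; here the per-step hitting probability of $\cC_1$ is only $\asymp n^{-1/3}$, so surviving a union over $(\log n)^3$ starting points forces the walk to run for $O(n^{1/3}\log\log n)$ steps --- this, not a union over $m^2$ paths, is the source of the $\log\log n$. Your heuristic that ``after $k$ steps the growing tree has $\asymp\sqrt{nk}$ vertices'' is borrowed from the $K_n$ setting and does not obviously transfer to $M$, whose walk has a highly non-uniform stationary distribution.
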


\noindent We remark that the upper bound in Theorem~\ref{T:diam_extreme} contains an additional $\log \log n$ correction term compared to the lower bound. We suspect that this term arises from technical limitations of our approach, and that the typical diameter of the tree should be of order $n^{1/3} \log n$.  
\begin{remark}
    As shown in \cite{ABR09}, the typical diameter of a random minimum spanning tree is of order $n^{1/3}$. Hence, in the regime of large $\gamma$, the model sees the appearance %\textcolor{red}{U: instead of`incurs' before; what about `sees the appearance of'?} 
    of an additional $\log n$ correction term beyond what is observed in the MST and in the RSTRE of \cite{MSS24, K24} whenever $\beta$ is large.
\end{remark}

The techniques developed in \cite{MSS23, MSS24} are not applicable to our disorder distribution, and hence new proof ideas are required. In particular, in Theorem~\ref{T:repeat_times}, we characterize repeat-times of sampling (with replacement) components in a critical \ErdosRenyi random graph in terms of an observable of a drifted Brownian motion, as described in \cite{Ald97}. This result will be needed in the proof of our main result (Theorem~\ref{T:diam_extreme}), but it may be of independent interest on its own. Furthermore, our proof uses a concentration argument for the sum of diameters (or typical distances) in randomly chosen components. As far as we are aware, neither one of these results has been studied in the literature before. In contrast, when $\gamma < 0$, the methods of \cite[Theorem~2.3]{MSS23} (see also \cite[Theorem~1.1]{MNS21}) can be adapted, and in Section~\ref{S:small_gamma} we show that in this case the diameter does behave as in the UST case.

% The tools developed in \cite{MS} are not applicable to this model and hence we needed new novel ideas. Along the way, we prove Theorem X, which describes repeat times in selecting components in an erdos-renyo random graph in terms of an observable of drifted brownian motion as described in \cite{Aldous}. The proof ideas may be of interest in it self, as we about concentration arguments of the sum of diameters of randoly choosen components. However, when \gamma < 0 is small, the techniques from \cite{MS and Nachmias} do apply and we are able to prove that the diameter behaves as the UST.

\subsection{Critical \ErdosRenyi random graphs and branching processes} \label{SS:ER-graphs}
Due to the structure of the weights in \eqref{eq:weight_dist}, there is a close connection between this model and critical \ErdosRenyi random graphs, for which we briefly recall some notation. Before doing so, however, we need to recall some basic graph-theoretic terminology.

Given a simple and undirected graph $G=(V,E)$ and vertices $u,v\in V$, we write $u\leftrightarrow v$ if there is a path from $u$ to $v$ (or equivalently, from $v$ to $u$ as we are considering undirected graphs). If such a path exists, then we write $d_G(u,v)$ for the graph distance, that is, the length of a shortest path connecting $u$ and $v$. The diameter of $G$ is the maximal distance over all pairs of distinct vertices in the graph. The component of a vertex $u\in V$ is the set of vertices $\mathcal{C}(u)=\mathcal{C}(u;G)\coloneqq \{v\in V: v\leftrightarrow u\}$.
We write $|\mathcal{C}(u)|$ for the number of vertices in $\mathcal{C}(u)$. We denote by $\mathcal{C}_i=\mathcal{C}_{i}(G)$ the $i$-th largest component in $G$ (if two components have the same size, we list them in such a way that the one containing the vertex of smallest label comes first in the ordering) and write $|\mathcal{C}_i|$ for its size. In particular, $|\mathcal{C}_1|=\max_{u\in V}|\mathcal{C}(u)|$. 

\smallskip

We now recall the definition and some basic properties of \ErdosRenyi random graphs.
Let $G = K_n$ be the complete graph with vertices labeled by $\{1, \ldots, n\} =: [n]$. Given $p \in [0,1]$, for each edge $ e \in E(G)$ we independently keep the edge with probability $p$, and remove it otherwise (i.e., with probability $1-p$). We denote the resulting (random) graph by $\mathbb{G}(n,p)$, and write $\P_{n,p}$ and $\E_{n,p}$ for the corresponding law and expectation, respectively. When the parameters are clear, we often suppress the subscripts and write $\P,\E$ instead. %\textcolor{red}{U: do we wanna do that? I never do it, I think make notation a bit heavy?} 

It is well-known that, by letting $p=p(n)=\mu/n$ with $\mu>0$, a phase transition occurs when $\mu$ passes one. More precisely, when $\mu<1$ (fixed), then w.h.p.\ there is no component in $\mathbb{G}(n,p)$ containing more than $O(\log(n))$ vertices; when $\mu>1$ (fixed), then w.h.p.\ there is a unique giant component of order $\Theta(n)$ and all other components are of size $O(\log(n))$. Finally, when $\mu=1$ (the so-called \textit{critical} case), then the size of a largest component is of order $\Theta(n^{2/3})$. In fact, if 
\begin{equation*}
    p = \frac{1 + \lambda n^{-1/3}}{n}, \qquad \lambda \in \R,
\end{equation*}
then largest components are still of size $\Theta(n^{2/3})$; this is the so-called \textit{critical window}. We refer the reader to e.g.\ \cite{vdH17} for a detailed introduction to the topic and proofs of the above statements.

We will often use a coupling between the random weights in \eqref{eq:weight_dist} and the $\mathbb{G}(n,p)$ model by keeping an edge $e$ if and only if $\weight(e) = n^{1+\gamma}$. In particular, for any $u \in V = [n]$ we have
\begin{equation*}
    \cC(u) = \big\{ v \in V : \exists \text{ a path between $u$ and $v$ of edges $e$ with } \weight(e) = n^{1+\gamma} \big\}.
\end{equation*}
Whenever we refer to the components of the random graph in the context of the random weights, we implicitly assume this coupling.

%\textbf{Graph-theoretic notation.}
\smallskip

Finally, we will also need the concept of \textit{branching process}, which we briefly recall now. We refer the reader to e.g.\ \cite{vdH17} for an introduction to this model. Let $X$ be a random variable taking values in the set of non-negative integers $\mathbb{N}_0$ and let $(X_{i,k}:i,k\in \mathbb{N})$ be a family of i.i.d.\ random variables distributed as $X$.
A (discrete-time) branching process is a stochastic process $(Z_k:k\in \mathbb{N}_0)$ with values in $\mathbb{N}_0$ which is constructed recursively as follows. We set $Z_0\coloneqq 1$ and iteratively define
\begin{equation*}
    Z_k\coloneqq \sum_{i=1}^{Z_{k-1}}X_{i,k}, \text{ for }k\in \mathbb{N}.
\end{equation*}

We can interpret $Z_k$ as the number of alive individuals at time $k$ in a population started with one individual, where each member of the population, independently from the other individuals, produces $X$ offspring in the next generation before dying. From this point of view, $X_{i,k}$ is the number of children (in generation $k$) of the $i$-th individual belonging to the $(k-1)$-th generation. There is a natural (random) rooted tree $T_{\BP}$, with root $\rho$, associated to such a process. For each $k \in \N$, the random variable $Z_k$ then corresponds to the number of vertices at distance $k$ from the root. The height of $T_{\BP}$, denoted by $\height(T_{\BP})$, is the maximal distance of a vertex to the root, or equivalently the maximum $k$ such that $Z_k>0$.

\subsection{Outline and proof ideas } \label{SS:proof_ideas}
In Section~\ref{S:RG_estimates}, we collect several facts and estimates about connected components in critical random graphs, whereas in Section~\ref{S:size_biased_GNP} we consider the problem of sampling (with replacement) components in a critical random graph and study how many components have to be sampled until a component is sampled twice for the first time. The proof of Theorem~\ref{T:diam_extreme} is split into two parts: in Section~\ref{S:lower_bound}, we cover the proof of the lower bound in Theorem~\ref{T:diam_extreme}, whereas Section~\ref{S:upper_bound} covers the upper bound. Finally, in Section~\ref{S:small_gamma} we discuss the case $\gamma < 5$, and prove that for $\gamma < 0$ the diameter of the spanning tree is of order $\sqrt{n}$. 

For the proof of the lower bound in Theorem~\ref{T:diam_extreme}, with the Aldous-Broder algorithm in mind, we run a random walk started at a randomly chosen vertex and keep track of all the connected components that the walk visits. The results from Section~\ref{S:size_biased_GNP} will imply that approximately $n^{1/3}$ components are visited before a repeated component is seen by the random walk. We then construct a path in the tree consisting of the union of these $n^{1/3}$ components. Roughly speaking, each component $\cC_i$ will contribute at least $c |\sqrt{\cC_i}|$ to the length of the path. The lower bound then follows from a concentration inequality applied to the sum of $n^{1/3}$ terms of the form $c \sqrt{|\cC_i|}$.  See also Section~\ref{SS:lower_idea} for a more technical explanation.

%\smallskip

The rough idea of the proof for the upper bound is the following. The length of a path in $\cT$ between two vertices $u$ and $v$ can be bounded by decomposing the path into two paths connecting the vertices $u$ and $v$ to a ``large'' set $\mathcal{L}$, e.g.\ $\mathcal{L} = \cC_1$, and a path inside $\mathcal{L}$. When $\mathcal{L} = \cC_1$, the latter only contributes an additive factor of order $n^{1/3}$ to the overall length. As $\mathcal{L}$ has a size of order $n^{2/3}$, the path, say from $u$ to $\mathcal{L}$, typically visits about $n^{1/3}$ many components before reaching $\cC_1$. We will show that the sum of the diameters of these $n^{1/3}$ components concentrates well enough. However, to guarantee that the paths from all vertices (i.e., not only typical) to $\mathcal{L}$ visit on the order of at most $n^{1/3}$ many components, we need to enlarge $\mathcal{L}$ to include more than just the largest component. This will be achieved by setting $\mathcal{L}$ to be the union of the largest $(\log n)^3$ many components so that the size of $\mathcal{L}$ is at least $n^{1/3} \log n$. A union bound over random walks started in the $(\log n)^3$ largest components incurs our $\log \log n$ correction term. %This last step is where we incur a multiplicative $\log \log n$ error term through a union bound over $(\log n)^3$ starting points (corresponding to the $(\log n)^3$ largest components) of the random walks.

%However, since we later use a union bound, we first need to introduce a larger set $\mathcal{L}$ guaranteeing that, with high probability, the paths starting from all (i.e., not only typical) vertices only visit approximately $n^{1/3}$ many components before reaching $\mathcal{L}$. 

\subsection*{General notation.}
Here we collect some standard notation used throughout the article. We write $\mathbb{N}=\{1,2,\dots\}$ for the set of positive integer and $\mathbb{N}_0=\{0\}\cup \mathbb{N}$. For $n\in \mathbb{N}$ we set $[n]\coloneqq \{1,\dots,n\}$. When talking about sequences of random variables, we use the abbreviation i.i.d.\ to mean that they are independent and identically distributed. Moreover, we write $\text{Bin}(N,q)$ for the binomial distribution with parameters $N\in \mathbb{N}$ and $q\in [0,1]$; we write $\text{Ber}(q)$ for the Bernoulli distribution with parameter $q\in [0,1]$. Sometimes we write $=_d$ to represent equality in distribution. Given functions $f,g:\mathbb{N}\mapsto [0,\infty)$, we write either $f\ll g$ or $f=o(g)$ when $f(n)/g(n)\rightarrow 0$ as $n\rightarrow \infty$, and we write $f\gg g$ when $g(n)/f(n)\rightarrow 0$  as $n\rightarrow \infty$. We write $f=O(g)$ when there is a constant $C\geq 0$ such that $f(n)\leq Cg(n)$ for all large enough $n$. We write $f=\Theta(g)$ when $f=O(g)$ and $g=O(f)$.%, and we write $f=\Omega(g)$ when $g=O(f)$. 
%Finally, we write either $f\asymp g$ or $f=\Theta(g)$ when $f=O(g)$ and $g=O(f)$. 
%Luca: The following may not be necessary
%Given an event $\mathcal{H}$, we write either $\mathbbm{1}_{\mathcal{H}}$ or $\mathbbm{1}\mathcal{H}$ for the associated indicator random variable. Given two events $\mathcal{H}_1$ and $\mathcal{H}_2$, we often write $\mathbb{P}(\mathcal{H}_1, \mathcal{H}_2)$ rather than $\mathbb{P}(\mathcal{H}_1\cap \mathcal{H}_2)$. 
%We write $\log^b(\cdot)$ rather than $(\log(\cdot))^b$, for $0\neq b\in \mathbb{R}$. 
%Given numbers $a,b,c,d\in \mathbb{R}$ with $c,d\neq 0$, we often write $ab/c$ instead of the more precise $(ab)/c$ (but we never write $a/cd$ for $a/(cd)$).

%%%%%%%%%%%%%%%%%%%%%% NEW SECTION %%%%%%%%%%%%%%%%%%%%%%%%%
%%%%%%%%%%%%%%%%%%%%%% RG estimates %%%%%%%%%%%%%%%%%%%%%%%%%

\section{Random graph estimates} \label{S:RG_estimates}
In this section, we collect some properties about components in a critical binomial random graph, which will be used in the proof of Theorem~\ref{T:diam_extreme}. We remark that some of the results listed below are already known in the literature, however, for completeness and clarity’s sake, we include some of the proofs.

\subsection{Largest components}

Recall that we write $\mathcal{C}(i)$ for the component of vertex $i\in [n]$, whereas $\cC_1$ is the largest of such components. As already mentioned in Section~\ref{SS:ER-graphs}, the largest component of a binomial random graph considered in the critical window contains approximately $n^{2/3}$ vertices. The following theorem, which corresponds to Theorem~5.1 in \cite{vdH17}, gives tail bounds on the size of $\cC_1$. Stronger tail bounds can be found e.g.\ in \cite{DeARob1} and \cite{DeA25}.
\begin{theorem} \label{AP_T:C_1_size}
    Suppose that $p= (1 + \lambda n^{-1/3})/n$ for $\lambda \in \R$ fixed. Then, for any $\varepsilon > 0$, there exist constants $A = A(\lambda, \varepsilon)$ and $n_0 = n_0(\lambda, \varepsilon)$ such that
    \begin{equation}
        \P_{n,p} \Big( \frac{1}{A} n^{2/3} \leq |\cC_1| \leq A n^{2/3} \Big) \geq 1 - \varepsilon,
    \end{equation}
    for all $n \geq n_0$.
\end{theorem}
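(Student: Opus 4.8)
My plan is to establish the two inequalities separately: the upper bound on $|\cC_1|$ by a first-moment argument and the lower bound by a second-moment argument, the common analytic input being sharp tail bounds for the size of a \emph{single} component. Exploring $\cC(1)$ by breadth-first search produces a process $(A_t)_{t\ge0}$ with $A_0=1$, increments $A_t-A_{t-1}=X_t-1$ where $X_t\sim\mathrm{Bin}(n-t-A_{t-1}+1,p)$ counts newly discovered neighbours, and $|\cC(1)|=\min\{t:A_t=0\}$. Dominating $X_t$ from above by $\mathrm{Bin}(n-1,p)$ (ignoring depletion), the hitting-time theorem together with a local CLT yields the classical bound $\P_{n,p}(|\cC(1)|\ge \ell)\le C(\lambda)\,\ell^{-1/2}$ for all $1\le \ell\le n$; keeping the depletion term, which turns the mean of $A_t$ into the inverted parabola $\approx 1+\lambda n^{-1/3}t-t^2/(2n)$, improves this for $\ell\ge n^{2/3}$ by an extra factor of order $n^{2/3}/\ell$ (indeed super-polynomially). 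Conversely, $\P_{n,p}(|\cC(1)|\ge\ell)\ge c(\lambda)\,\ell^{-1/2}$ for $\ell\le\eta n^{2/3}$ with a fixed $\eta>0$ follows by coupling $X_t$ from below: while fewer than $2\ell$ vertices have been touched, $X_t$ stochastically dominates $\mathrm{Bin}(n-2\ell,p)$, a barely subcritical offspring law, and a ballot-type estimate shows that its exploration walk stays positive for $\ell$ steps with probability of order $\ell^{-1/2}$.

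For the \textbf{upper bound}, set $Z_{\ge\ell}\coloneqq \#\{v\in[n]:|\cC(v)|\ge\ell\}$, so that $\E_{n,p}[Z_{\ge\ell}]=n\,\P_{n,p}(|\cC(1)|\ge\ell)$. On the event $\{|\cC_1|\ge\ell\}$ all of $\cC_1$ is counted by $Z_{\ge\ell}$, whence $Z_{\ge\ell}\ge\ell$, and Markov's inequality gives
\[
\P_{n,p}(|\cC_1|\ge\ell)\;\le\;\frac{\E_{n,p}[Z_{\ge\ell}]}{\ell}\;=\;\frac{n\,\P_{n,p}(|\cC(1)|\ge\ell)}{\ell}\;\le\;\frac{Cn}{\ell^{3/2}}.
\]
Taking $\ell=An^{2/3}$ makes this at most $CA^{-3/2}$, which is below $\varepsilon/2$ once $A$ is large, uniformly in $n$.

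For the \textbf{lower bound}, fix a small $\delta\in(0,\eta)$, put $\ell=\delta n^{2/3}$, and note $c(\lambda)\,\delta^{-1/2}n^{2/3}\le\E_{n,p}[Z_{\ge\ell}]\le C(\lambda)\,\delta^{-1/2}n^{2/3}$. Decompose $\E_{n,p}[Z_{\ge\ell}^2]=\sum_{u,v}\P_{n,p}(|\cC(u)|\ge\ell,|\cC(v)|\ge\ell)$ into three parts. The diagonal contributes $\E_{n,p}[Z_{\ge\ell}]$, negligible against $\E_{n,p}[Z_{\ge\ell}]^2$. For pairs with $u\not\leftrightarrow v$, conditioning on $\cC(u)$ only deletes vertices, so the conditional probability that $|\cC(v)|\ge\ell$ is at most $\P_{n,p}(|\cC(1)|\ge\ell)$ by monotonicity of components under vertex/edge addition; this part is therefore at most $\E_{n,p}[Z_{\ge\ell}]^2$. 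For pairs with $u\leftrightarrow v$ the sum equals $n\,\E_{n,p}\big[(|\cC(1)|-1)\mathbf{1}\{|\cC(1)|\ge\ell\}\big]$, and writing $\E_{n,p}[|\cC(1)|\mathbf{1}\{|\cC(1)|\ge\ell\}]=\ell\,\P_{n,p}(|\cC(1)|\ge\ell)+\sum_{m>\ell}\P_{n,p}(|\cC(1)|\ge m)$ and using the $\ell^{-1/2}$ bound below scale $n^{2/3}$ and its faster decay above, this part is $O(n^{4/3})=O(\delta)\,\E_{n,p}[Z_{\ge\ell}]^2$. Hence $\mathrm{Var}_{n,p}(Z_{\ge\ell})\le (C\delta+o_n(1))\,\E_{n,p}[Z_{\ge\ell}]^2$, and Chebyshev gives $\P_{n,p}(Z_{\ge\ell}=0)\le C\delta+o_n(1)$; since $Z_{\ge\ell}>0$ forces $|\cC_1|\ge\delta n^{2/3}$, we obtain $\P_{n,p}(|\cC_1|<\delta n^{2/3})\le C\delta+o_n(1)$. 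Given $\varepsilon$, choose $A$ with $CA^{-3/2}<\varepsilon/2$, then $\delta=1/A$ (so that $C\delta<\varepsilon/4$ for $A$ large), then $n_0$ so that the $o_n(1)$ term is $<\varepsilon/4$ for $n\ge n_0$; combining the two displays proves the theorem.

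The genuinely analytic step, and the main obstacle, is the two-sided single-component tail: $c(\lambda)\ell^{-1/2}\le\P_{n,p}(|\cC(1)|\ge\ell)\le C(\lambda)\ell^{-1/2}$ for $\ell\lesssim n^{2/3}$, with faster decay for $\ell\gg n^{2/3}$. Everything else is first/second-moment bookkeeping. Since these bounds are classical, I would simply cite them (see e.g.\ \cite[Chapter~5]{vdH17} and the references therein), remarking only that deleting $O(n^{2/3})$ vertices from $K_n$ perturbs the critical parameter $\lambda$ by $O(1)$, so all constants above can be taken uniform over the conditionings that occur.
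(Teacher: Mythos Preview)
The paper does not prove this theorem at all; it simply cites it as Theorem~5.1 in \cite{vdH17} (and points to \cite{DeARob1,DeA25} for sharper tails). Your sketch is correct and is in fact the standard first/second-moment argument that underlies the proof in \cite[Chapter~5]{vdH17}: the upper bound via $\P(|\cC_1|\ge\ell)\le n\,\P(|\cC(1)|\ge\ell)/\ell$ and the lower bound via Paley--Zygmund/Chebyshev on $Z_{\ge\ell}$, both fed by the two-sided tail $\P(|\cC(1)|\ge\ell)\asymp\ell^{-1/2}$ in the critical window. So your approach is essentially the same as the cited reference, and there is nothing to compare against in the paper itself.
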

Later on we will need bounds not only on the size of the largest component, but also on the size of the union of the $k$ largest components, for some $k=k(n)$. The next result says that, if $k$ is at least of poly-logarithmic order (in the number of vertices), then the number of vertices contained in the first $k$ largest components is at least $n^{2/3}\log(n)$. 
\begin{lemma} \label{L:union_of_largest}
    Fix $\varepsilon>0$. There is $n_0=n_0(\varepsilon)\in \mathbb{N}$ such that, if $k\coloneqq \lceil (\log n)^3 \rceil$ and $n\geq n_0$, then
    \begin{equation}
        \mathbb{P}_{n,1/n}\Big(\sum_{i=1}^k |\mathcal{C}_i| \geq n^{2/3} \log n\Big)\geq 1-\varepsilon.
    \end{equation}
\end{lemma}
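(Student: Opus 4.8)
The plan is to control, for every relevant threshold $t$, the number $N_{>t} := \#\{i : |\mathcal{C}_i| > t\}$ of components of $\mathbb{G}(n,1/n)$ on more than $t$ vertices, and then recover the quantity of interest from the layer-cake identity
\begin{equation*}
  \sum_{i=1}^{k} |\mathcal{C}_i| \;=\; \int_0^\infty \#\{i \le k : |\mathcal{C}_i| > t\}\,dt \;=\; \int_0^\infty \min\big(k,\, N_{>t}\big)\,dt,
\end{equation*}
where the second equality uses that $\{i : |\mathcal{C}_i| > t\}$ is an initial segment of the size-ordered list of components. Heuristically the action takes place at the scale $t_\star$ where $N_{>t}$ crosses level $k$, namely $t_\star \asymp (n/k)^{2/3} \asymp n^{2/3}(\log n)^{-2}$: the part of the integral over $t \le t_\star$ contributes $\asymp k\,t_\star \asymp n^{2/3}k^{1/3}$, the part over $t \ge t_\star$ contributes a comparable constant multiple of $n^{2/3}k^{1/3}$, and — this is the point — the two constants add up to the number $3c_0^{2/3}$, which is \emph{strictly larger than $1$} (numerically $\approx 1.24$), where $c_0 := \tfrac{2}{3\sqrt{2\pi}}$. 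Since $k = \lceil (\log n)^3 \rceil$ forces $k^{1/3} \ge \log n$, this will beat $n^{2/3}\log n$.

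The two inputs are a first-moment estimate and a variance bound. For the first moment, using $\mathbb{E}_{n,1/n}[N_{>t}] = n\sum_{s>t}\tfrac1s\,\mathbb{P}_{n,1/n}(|\mathcal{C}(1)|=s)$ together with the local limit asymptotics $\mathbb{P}_{n,1/n}(|\mathcal{C}(1)|=s) = (\tfrac{1}{\sqrt{2\pi}}+o(1))\,s^{-3/2}$ valid uniformly for $s = o(n^{2/3})$ (see e.g.\ \cite{vdH17}, and \cite{Ald97}), one gets $\mathbb{E}_{n,1/n}[N_{>t}] = (c_0+o(1))\,n\,t^{-3/2}$, uniformly for $t$ in a window such as $[\,n^{2/3}(\log n)^{-10},\, n^{2/3}(\log n)^{-1}\,]$ containing $t_\star$. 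For the variance, I would expand $\mathbb{E}[N_{>t}^2] = \mathbb{E}[N_{>t}] + \sum_{S\ne S'}\mathbb{P}(S,S'\text{ are both components})$ over ordered pairs of disjoint vertex sets with $|S|,|S'|>t$, and use that conditioning on $S$ being a component leaves a fresh $\mathbb{G}([n]\setminus S, 1/n)$, so $\mathbb{P}(S,S'\text{ both components}) = \mathbb{P}(S\text{ a component})\cdot \mathbb{P}_{[n]\setminus S,\, 1/n}(S'\text{ a component})$; summing over $S'$ gives $\sum_{S}\mathbb{P}(S\text{ a component})\,\mathbb{E}_{n-|S|,\,1/n}[N_{>t}]$, and a one-line term-by-term comparison (reducing to $(n'{+}1)(1-(1-\tfrac1n)^s)\le s$, which holds since $1-(1-\tfrac1n)^s\le s/n$) shows $\mathbb{E}_{n',1/n}[N_{>t}]$ is nondecreasing in $n'$, hence $\le \mathbb{E}_{n,1/n}[N_{>t}]$. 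Therefore $\sum_{S\ne S'}\mathbb{P}(\cdots)\le \mathbb{E}[N_{>t}]^2$ and $\operatorname{Var}(N_{>t}) \le \mathbb{E}[N_{>t}]$.

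Given these, Chebyshev gives $\mathbb{P}\big(N_{>t} < (1-\delta)\mathbb{E}[N_{>t}]\big) \le (\delta^2\mathbb{E}[N_{>t}])^{-1} = O(\delta^{-2}(\log n)^{-3})$ throughout the window, so a union bound over $O(\log n)$ dyadic thresholds $t$, combined with monotonicity of $t\mapsto N_{>t}$, produces an event of probability $\ge 1-\varepsilon$ (for $n$ large) on which $N_{>t} \ge (1-\delta)(c_0-o(1))\,n\,t^{-3/2}$ for all $t$ in the window. On that event I would bound the integrand below by $\min\big(k,\,(1-\delta)(c_0-o(1))\,n\,t^{-3/2}\big)$, restrict integration to $[0,\, n^{2/3}(\log n)^{-1}]$, and split at the point $t_\dagger$ where $(1-\delta)(c_0-o(1))\,n\,t^{-3/2}=k$; a short calculus computation then yields
\begin{equation*}
  \sum_{i=1}^{k} |\mathcal{C}_i| \;\ge\; 3\big((1-\delta)(c_0-o(1))\big)^{2/3}\, n^{2/3} k^{1/3} \;=\; (1-\delta)^{2/3}\,3c_0^{2/3}\,(1-o(1))\, n^{2/3} k^{1/3}.
\end{equation*}
Since $3c_0^{2/3} > 1$ and $k^{1/3}\ge\log n$, choosing $\delta$ small and then $n_0(\varepsilon)$ large makes the right-hand side $\ge n^{2/3}\log n$, which is the claim.

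The hard part is the constant in the first-moment estimate. Because the true order of $\sum_{i=1}^k|\mathcal{C}_i|$ is $\approx 1.24\,n^{2/3}\log n$ — only about $20\%$ above the target — a crude branching-process lower bound on $\mathbb{P}(|\mathcal{C}(1)|\ge s)$ does \emph{not} suffice: one genuinely needs the precise constant $c_0=\tfrac{2}{3\sqrt{2\pi}}$ from the local limit theorem for critical component sizes (equivalently the Borel-distribution asymptotics $s^{s-1}e^{-s}/s! \sim (2\pi)^{-1/2}s^{-3/2}$), together with the numerical fact that $3c_0^{2/3}>1$, and one needs this local limit theorem in a form uniform over the window of scales $t \asymp n^{2/3}(\log n)^{-2}$. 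By contrast, the variance bound and the layer-cake bookkeeping are soft and essentially self-contained.
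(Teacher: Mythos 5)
Your approach is genuinely different from the paper's. The paper argues by contradiction: if $\sum_{i=1}^k|\mathcal C_i|<n^{2/3}\log n$, then in particular $|\mathcal C_k|<n^{2/3}\log n/k$; conditionally on the first $k-1$ components being small, $|\mathcal C_k|$ stochastically dominates the largest component of the slightly subcritical graph $\mathbb{G}(n-\lceil n^{2/3}\log n\rceil,1/n)$, and the \cite{NP07} concentration result gives that this dominating quantity is of order $n^{2/3}(\log n)^{-2}\log\log n$, beating the target $n^{2/3}/(\log n)^2$ by a $\log\log$ factor. So the paper's margin is $\Theta(\log\log n)$ and no sharp constant is ever needed. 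Your plan, by contrast, hinges on the precise constant $c_0=\tfrac{2}{3\sqrt{2\pi}}$ in the component-size local limit theorem and the numerical fact $3c_0^{2/3}\approx 1.24>1$, i.e.\ a margin of only about $24\%$. The layer-cake decomposition, the identity $\mathbb{E}[N_{>t}]=n\sum_{s>t}s^{-1}\mathbb{P}(|\mathcal C(1)|=s)$, and the variance bound $\operatorname{Var}(N_{>t})\le\mathbb{E}[N_{>t}]$ (via the spatial Markov property and the monotonicity $\mathbb{E}_{n',1/n}[N_{>t}]\le\mathbb{E}_{n,1/n}[N_{>t}]$ for $n'\le n$) are all correct and rather elegant.

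However there is a concrete gap in the union-bound/monotonicity step, and it is fatal precisely because your margin is so thin. You apply Chebyshev at $O(\log n)$ \emph{dyadic} thresholds and claim monotonicity of $t\mapsto N_{>t}$ then yields the pointwise bound $N_{>t}\ge (1-\delta)(c_0-o(1))\,n\,t^{-3/2}$ for all $t$ in the window. But monotonicity gives, for $t\in[t_j,t_{j+1}]$ with $t_{j+1}=2t_j$, only $N_{>t}\ge N_{>t_{j+1}}\ge (1-\delta)(c_0-o(1))\,n\,t_{j+1}^{-3/2}\ge 2^{-3/2}(1-\delta)(c_0-o(1))\,n\,t^{-3/2}$. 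Carrying the $2^{-3/2}$ factor through your calculus computation replaces the constant $3c_0^{2/3}$ by $3(2^{-3/2}c_0)^{2/3}=\tfrac{3}{2}c_0^{2/3}\approx 0.62<1$, and the conclusion no longer follows. This is repairable — use a grid of ratio $1+\eta$ with $\eta$ a small fixed constant, which gives $O(\log\log n)$ thresholds so the Chebyshev union bound is still $o(1)$, while the monotonicity loss $(1+\eta)^{-1}$ in the final constant can be made as small as you like — but as written the argument does not close. A second, softer, issue is the citation: the uniform local limit $\mathbb{P}_{n,1/n}(|\mathcal C(1)|=s)=(\tfrac{1}{\sqrt{2\pi}}+o(1))s^{-3/2}$ for $s$ up to $\Theta(n^{2/3}(\log n)^{-2})$ with the exact constant is not supplied by \cite{Ald97} (which is about the scaling limit of the component sizes, not a local limit) nor by the part of \cite{vdH17} the paper uses (Proposition~5.2 is a one-sided tail bound without a constant); the paper's own Lemma~\ref{L:collection_RG_facts}\ref{enu:collection_1} only gives $\mathbb{P}(|\mathcal C(1)|=j)\ge c\,j^{-3/2}$ with an unspecified $c$ and only for $j\le n^{1/5}$, which is far too small a range and gives no constant. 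The local limit you need is true and can be derived from Cayley's formula and Stirling (the tree-component contribution already has the right constant, and the depletion corrections are $O(s^3/n^2)=O((\log n)^{-6})$ in your window), but you would have to supply that derivation, and — as you yourself flag — the whole argument collapses if the constant were even slightly smaller.
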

\begin{proof}
    Note that, since $\sum_{i=1}^k |\mathcal{C}_i|\geq k|\mathcal{C}_k|$ (and the $|\mathcal{C}_i|$ are non-negative random variables), if $\sum_{i=1}^k |\mathcal{C}_i|<n^{2/3}\log(n)$ then the events
    \begin{equation*}
         \Big\{ k|\mathcal{C}_k|<n^{2/3}\log(n) \Big\} \ \text{ and } \ \Big\{ \sum_{i=1}^{k-1} |\mathcal{C}_i|<n^{2/3}\log(n) \Big\}
    \end{equation*}
    both occur. Therefore, we can write
    \begin{align*}
        \mathbb{P}\Big(\sum_{i=1}^k |\mathcal{C}_i|<n^{2/3}\log(n)\Big)&\leq \mathbb{P}\Big(\sum_{i=1}^{k-1} |\mathcal{C}_i|<n^{2/3}\log(n), |\mathcal{C}_k|<n^{2/3}\log(n)/k\Big)\\
        &\leq \mathbb{P}\Big(|\mathcal{C}_k|<n^{2/3}\log(n)/k\mid \sum_{i=1}^{k-1} |\mathcal{C}_i|<\lceil n^{2/3}\log(n)\rceil\Big).
    \end{align*}
    On the event $\{ \sum_{i=1}^{k-1} |\mathcal{C}_i|<\lceil n^{2/3}\log(n)\rceil\}$, we see that $|\mathcal{C}_k|$ stochastically dominates the size of a largest component in the binomial random graph $\mathbb{G}(n-\lceil n^{2/3}\log(n)\rceil,1/n)$. Set $m\coloneqq n-\lceil n^{2/3}\log(n)\rceil$ and note that
    \begin{equation*}\label{}
        \frac{1}{n} = \frac{1}{m} \frac{n - \lceil n^{2/3}\log(n)\rceil}{n}  \geq \frac{1 - 2 m^{-1/3} \log m}{m}\eqqcolon \frac{1-\gamma(m)}{m}.
    \end{equation*}
    Thus, writing $|\mathcal{C}_1^m|$ for the size of a largest component in the random graph $\mathbb{G}(m,(1-\gamma(m))/m)$, we obtain
    \[\mathbb{P} \Big( |\mathcal{C}_k|<n^{2/3}\log(n)/k\mid \sum_{i=1}^{k-1} |\mathcal{C}_i|<\lceil n^{2/3}\log(n)\rceil \Big) \leq \mathbb{P} \big( |\mathcal{C}_1^m|<n^{2/3}\log(n)/k \big).\]
    Moreover, for all large enough $n$ 
    \begin{equation}\label{edgeprob}
        \frac{n^{2/3}\log(n)}{k}\leq 2\gamma(m)^{-2}\log(m\gamma(m)^3)\frac{C}{\log\log(m)}
    \end{equation}
    for some constant $C>0$. However, from e.g.\ \cite{NP07} it is known that
    \[\frac{|\mathcal{C}_1^m|}{2\gamma(m)^{-2}\log(m\gamma(m)^3)}\longrightarrow 1\]
    in probability as $m\rightarrow \infty$. Thus, using (\ref{edgeprob}), we obtain
    \[\mathbb{P} \big(|\mathcal{C}_1^m|<n^{2/3}\log(n)/k \big)\leq \mathbb{P}\Big(\frac{|\mathcal{C}_1^m|}{2\gamma(m)^{-2}\log(m\gamma(m)^3)}<\frac{C}{\log\log(m)}\Big)\leq \varepsilon \]
    for all large enough $m$, completing the proof.
\end{proof}

When $p$ is in the critical window, then the maximum diameter is typically achieved by one of the largest few components, and these components have a diameter of order at most $n^{1/3}$. In fact, the number of cycles in these components is bounded in probability and locally they are tree-like. %The result below is stated for $pn = 1 + \lambda n^{-1/3}$, $\lambda \in \R$ fixed, and presented in a way that it's useful for us. However, a stronger tail bound holds, see \cite{NP08} for a more general statement.
We will make use of the following theorem, and remark that stronger tail bounds are proven in \cite{NP08}.
\begin{theorem}[Theorem~1.3 in \cite{NP08}] \label{T:Nachmias_diam}
    Fix $\varepsilon > 0$ and $\lambda \in \R$. There exists $A = A(\varepsilon, \lambda) < \infty$ and $n_0=n_0(\varepsilon,\lambda)\in \mathbb{N}$ such that
\begin{equation}
    \P_{n,p} \big( \exists \text{connected component } \mathcal{C} \text{ with } \diam(\mathcal{C}) > An^{1/3} \big) < \varepsilon
\end{equation}
for all $n\geq n_0$.
\end{theorem}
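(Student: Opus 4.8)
The plan is to bound the \emph{expected} number of components with diameter larger than $An^{1/3}$, after first discarding atypically large components. Fix $\varepsilon>0$. By Theorem~\ref{AP_T:C_1_size} we may choose $A_1=A_1(\varepsilon)$ so that $\P_{n,p}(|\mathcal{C}_1|>A_1 n^{2/3})<\varepsilon/2$ for all large $n$; on the complementary event every component has at most $A_1 n^{2/3}$ vertices. It therefore suffices to show $\E_{n,p}[N_A]<\varepsilon/2$, where $N_A$ is the number of components $\mathcal{C}$ with $\diam(\mathcal{C})>An^{1/3}$ and $|\mathcal{C}|\le A_1 n^{2/3}$, and then to apply Markov's inequality. (A union bound over the $n$ vertices does not suffice here: the eccentricity of a fixed vertex exceeds $An^{1/3}$ with probability of order $(An^{1/3})^{-1}$, and $n\cdot n^{-1/3}\to\infty$. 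Counting components rather than vertices is what saves us, since a component of large diameter is necessarily large and large components are rare.)

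To make this precise I would use the identity $N_A=\sum_{v\in[n]}|\mathcal{C}(v)|^{-1}\ind\{\diam(\mathcal{C}(v))>An^{1/3},\,|\mathcal{C}(v)|\le A_1 n^{2/3}\}$ and the symmetry of the model to write
\begin{equation*}
\E_{n,p}[N_A]=n\sum_{m=\lceil An^{1/3}\rceil}^{\lfloor A_1 n^{2/3}\rfloor}\frac{1}{m}\,\P_{n,p}\big(|\mathcal{C}(1)|=m\big)\,\P_{n,p}\big(\diam(\mathcal{C}(1))>An^{1/3}\,\big|\,|\mathcal{C}(1)|=m\big),
\end{equation*}
the sum starting at $\lceil An^{1/3}\rceil$ because a graph on $m$ vertices has diameter at most $m-1$. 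Two ingredients then enter. The first is the standard critical-window tail $\P_{n,p}(|\mathcal{C}(1)|=m)\le C(\lambda)\,m^{-3/2}$, uniform over $m\le A_1 n^{2/3}$ (one may instead use the cruder $\P_{n,p}(|\mathcal{C}(1)|\ge m)\le C(\lambda)m^{-1/2}$ together with summation by parts). The second, which is the heart of the matter, is a sub-Gaussian conditional bound on the diameter:
\begin{equation*}
\P_{n,p}\big(\diam(\mathcal{C}(1))>An^{1/3}\,\big|\,|\mathcal{C}(1)|=m\big)\le C\exp\!\big(-cA^2n^{2/3}/m\big),\qquad m\le A_1 n^{2/3}.
\end{equation*}

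For the second ingredient I would explore $\mathcal{C}(1)$ by breadth-first search and let $(L_j)_{j\ge 0}$ be the resulting level sizes, so that $\sum_j L_j=|\mathcal{C}(1)|$ and $H:=\max\{j:L_j>0\}$ is the eccentricity of vertex $1$ in $\mathcal{C}(1)$; by the triangle inequality $\diam(\mathcal{C}(1))\le 2H$, so it is enough to estimate $\P(H>An^{1/3}/2\mid|\mathcal{C}(1)|=m)$. As long as fewer than $A_1 n^{2/3}$ vertices have been uncovered, $(L_j)$ evolves like the generation sizes of a near-critical Galton--Watson process with offspring law $\mathrm{Bin}(n,p)\approx\mathrm{Poisson}(1+\lambda n^{-1/3})$, up to corrections of relative size $O(n^{-1/3})$ coming from the depletion of unexplored vertices and from replacing $1-(1-p)^{L_j}$ by $L_j p$; on $\{|\mathcal{C}(1)|=m\}$ with $m\le A_1 n^{2/3}$ this comparison remains in force throughout the exploration, and since conditioning such a tree to have $m=O(n^{2/3})$ vertices makes it behave, at the relevant scale, like a conditioned \emph{critical} Galton--Watson tree, the bound follows from the classical sub-Gaussian estimate $\P(\height>t\sqrt m)\le Ce^{-ct^2}$ for the height of a finite-variance critical Galton--Watson tree conditioned to have $m$ vertices (see e.g.\ \cite{NP08} and the references therein), applied with $t=An^{1/3}/(2\sqrt m)$.

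Feeding the two ingredients into the displayed formula and substituting $m=xn^{2/3}$, the powers of $n$ cancel exactly and the sum is at most a constant multiple of $\int_0^{\infty}x^{-5/2}e^{-cA^2/x}\,dx=\tfrac{\sqrt{\pi}}{2}(cA^2)^{-3/2}$; hence $\E_{n,p}[N_A]\le C''(\varepsilon)\,A^{-3}$, which is at most $\varepsilon/2$ once $A=A(\varepsilon)$ is large enough, and together with the first paragraph this proves the theorem. The main obstacle is the conditional sub-Gaussian bound: one must show that a near-critical \ErdosRenyi component, conditioned on its size, is tree-like enough that the height of its breadth-first exploration obeys the same sub-Gaussian tail as a conditioned critical Galton--Watson tree, \emph{uniformly} over all relevant sizes $m\le A_1 n^{2/3}$ and over heights up to $An^{1/3}$; the remaining steps are elementary once this estimate is available.
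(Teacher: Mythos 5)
The paper does not contain a proof of this statement: it is quoted verbatim as Theorem~1.3 of~\cite{NP08}, where Nachmias and Peres establish it by a martingale analysis of the breadth-first exploration process, avoiding any conditioning on component sizes. Your proposal is therefore a genuinely different route: a first-moment method over \emph{components}, using the weighted-vertex identity $N_A=\sum_v |\mathcal{C}(v)|^{-1}\ind\{\cdots\}$, the component-size density $\P(|\mathcal{C}(1)|=m)=O(m^{-3/2})$ for $m\le A_1n^{2/3}$, and a conditional sub-Gaussian bound on the diameter given the size. The bookkeeping in your proposal is sound: the observation that a union bound over vertices fails but one over components succeeds is exactly the right leverage, the powers of $n$ cancel after $m=xn^{2/3}$, and $\int_0^\infty x^{-5/2}e^{-cA^2/x}\,dx=\tfrac{\sqrt{\pi}}{2}(cA^2)^{-3/2}$, giving $\E[N_A]=O(A^{-3})$ as claimed.

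The genuine gap is the one you flag yourself: the conditional estimate $\P\big(\diam(\mathcal{C}(1))>An^{1/3}\mid|\mathcal{C}(1)|=m\big)\le C\exp(-cA^2n^{2/3}/m)$. This does not follow ``from the classical estimate'' for conditioned Galton--Watson trees without a genuine transfer argument. Conditioning the \ErdosRenyi component on its size couples the offspring distribution to the depletion of the vertex pool and allows extra (cycle-creating) edges, so one needs either a precise structural description of the conditioned component (e.g.\ via a uniform random graph on $m$ vertices with a prescribed excess, plus a sub-Gaussian height bound for the resulting tree-like object) or a two-sided stochastic sandwich of the BFS level-size process between near-critical branching processes that remains valid after the size-$m$ conditioning, uniformly over $m\le A_1n^{2/3}$ and heights up to $An^{1/3}$. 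Neither is routine, and the sub-Gaussian height tail for conditioned GW trees you invoke is itself a nontrivial result (Addario-Berry, Devroye, Janson) that does not automatically carry over to the conditioned \ErdosRenyi setting. So the proposal reduces the theorem to another nontrivial lemma rather than proving it; the strategy is plausible, but as written the key step is not established, which is presumably why \cite{NP08} bypass it with an unconditional martingale argument.
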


\subsection{Typical components}

Next, we give bounds on the sizes of a typical component, that is, the component of some fixed vertex $v$ or, by symmetry, of the vertex $1$.

\begin{lemma} \label{L:RG_typical}
    Let $p=(1+\lambda n^{-1/3})/n$ where $\lambda\in \mathbb{R}$ is fixed, and let $r>0$. There exist constants $C=C(\lambda)>0$ and $n_0=n_0(\lambda,r)\in \mathbb{N}$ such that, if $k\leq rn^{2/3}$, then
    \begin{equation}
        \P( \cC(v) \geq k) \leq C(n^{-1/3}+k^{-1/2})
    \end{equation}
    for all $n\geq n_0$.
\end{lemma}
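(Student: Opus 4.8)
The plan is to bound $\P(|\cC(v)| \geq k)$ via exploration and comparison with branching processes. Recall that exploring the component of $v$ in $\mathbb{G}(n,p)$ in a breadth-first manner, the number of vertices found after $t$ steps is distributed as a sum of binomials; more precisely, $|\cC(v)|$ is stochastically dominated by the total progeny of a branching process with $\mathrm{Bin}(n-1, p)$ offspring distribution, and it stochastically dominates the total progeny of a branching process with $\mathrm{Bin}(n - k, p)$ offspring distribution as long as fewer than $k$ vertices have been discovered. Since we only care about the event $\{|\cC(v)| \geq k\}$, and once the exploration has uncovered $k$ vertices we already know $|\cC(v)| \geq k$, it suffices to work with the upper coupling: $\P(|\cC(v)| \geq k) \leq \P(\text{total progeny of BP}_{\mathrm{Bin}(n-1,p)} \geq k)$.

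First I would reduce to a random walk / hitting-time statement. Writing the exploration as a walk $S_t = 1 + \sum_{i=1}^t (X_i - 1)$ with $X_i$ i.i.d.\ $\mathrm{Bin}(n-1,p)$ (capped appropriately, but the cap only helps), the event that the total progeny is at least $k$ is contained in the event that $S_t > 0$ for all $t \le k-1$, i.e.\ $\min_{1 \le t \le k-1} S_t > 0$, equivalently $\sum_{i=1}^{t}(X_i - 1) \geq 0$ for all $t \leq k-1$. Now $\E[X_i - 1] = (n-1)p - 1 = (n-1)(1+\lambda n^{-1/3})/n - 1 = \lambda n^{-1/3} - 1/n + o(n^{-1/3})$, which is of order $n^{-1/3}$, and $\bvar(X_i - 1) = (n-1)p(1-p) = 1 + O(n^{-1/3})$. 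So the walk $S_t$ is a near-critical mean-zero-ish random walk with a tiny drift $\mu_n := (n-1)p - 1 = O(n^{-1/3})$ and unit variance.

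The key step is then a two-regime bound on $q_k := \P(\min_{1 \leq t \leq k-1} S_t > 0)$. When $k \leq r n^{2/3}$, the accumulated drift over $k$ steps is $\mu_n k = O(n^{-1/3} \cdot n^{2/3}) = O(n^{1/3})$, while the typical fluctuation of $S_k$ is of order $\sqrt{k}$, which for $k$ of order $n^{2/3}$ is also of order $n^{1/3}$: the drift never dominates in this range, so the walk behaves essentially like a critical (driftless) walk. For a mean-zero, finite-variance walk, a standard fact (e.g.\ via the Kemperman formula / cycle lemma, or Kolmogorov's estimate for the survival probability of a critical branching process, or a ballot-type estimate) gives $\P(S_1 > 0, \ldots, S_{k-1} > 0) \leq C k^{-1/2}$. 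To handle the nonzero drift I would either (i) absorb the drift by noting $\mu_n k \le C' n^{1/3} \le C'' \sqrt{k}$ over the whole range and run a comparison argument — e.g.\ bound $\P(\min_{t \le k-1} S_t > 0) \le \P(S_{k-1} > 0)$ when that is already $O(k^{-1/2})$, falling back on a direct estimate otherwise — or (ii) invoke the near-critical branching-process estimates already standard in this literature (Aldous, van der Hofstad). This produces the $k^{-1/2}$ term. Separately, even for small $k$ the probability is at most the probability that the exploration survives forever in the slightly supercritical regime, which for a branching process with mean $1 + \mu_n$, $\mu_n = O(n^{-1/3})$, has survival probability $O(\mu_n) = O(n^{-1/3})$; this gives the $n^{-1/3}$ term and is what one uses when $k$ is too small for the $k^{-1/2}$ bound alone to be the governing term. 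Taking the minimum-friendly sum of the two bounds yields $\P(|\cC(v)| \ge k) \le C(n^{-1/3} + k^{-1/2})$.

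The main obstacle I anticipate is making the drift genuinely harmless over the \emph{entire} range $k \leq r n^{2/3}$ while keeping constants uniform in $k$: one must check that the $O(n^{1/3})$ accumulated drift really is dominated by $\sqrt k$ uniformly, which forces a case split at, say, $k \asymp n^{2/3}$ (where drift and fluctuation are comparable up to the constant $r$) versus $k \ll n^{2/3}$, and in the former regime one cannot simply cite the exact driftless ballot identity but needs a robust comparison — either a Gaussian-type bound $\P(S_{k-1} > 0) \le 1/2 + C\mu_n\sqrt{k} \le 1/2 + C' r^{1/2}$ combined with a reflection/restart argument to upgrade to the running minimum, or a direct appeal to the near-critical results of \cite{vdH17}. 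I would lean on the latter to keep the proof short. The lower-coupling side (replacing $n-1$ by $n-k$) is not needed here since we only want an upper bound.
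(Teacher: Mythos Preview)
Your proposal is correct and follows the standard exploration/random-walk approach; the paper itself does not give a proof but simply refers to Proposition~5.2 of \cite{vdH17}, whose argument is precisely the one you sketch. So your approach is essentially the same as the cited source, just spelled out in more detail.
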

We refer to Proposition~5.2 of \cite{vdH17} for a proof. In the next lemma, we provide some further estimates about typical components; the ranges of $j$ are not optimal, but sufficient for our purposes.
\begin{lemma} \label{L:collection_RG_facts}
    Consider the critical \ErdosRenyi random graph $\mathbb{G}(n,p)$ with $p=p(n)=1/n$. The following facts hold true.
    \begin{enumerate}[label=(\roman*)]
        \item \label{enu:collection_1} There exists a constant $c > 0$ such that, if $1\leq j \leq  n^{1/5}$, then 
        \begin{equation}
            \mathbb{P} \big( |\mathcal{C}(1)|=j \big) \geq \frac{c}{j^{3/2}}.
        \end{equation}
        
        \item \label{enu:collection_2} Suppose that $1\leq j\leq n^{1/5}$. Then
        \begin{equation}
            \mathbb{P} \big( \mathcal{C}(1) \text{ contains a cycle} \ \big| \ |\mathcal{C}(1)|=j \big )\ll 1 .
        \end{equation}
    \end{enumerate}
\end{lemma}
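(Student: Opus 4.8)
The plan is to deduce both statements from the exact ``component exploration'' identity for $\mathbb{G}(n,1/n)$. Write $q\coloneqq 1/n$ and let $\kappa(j)$ denote the probability that $\mathbb{G}(j,q)$ is connected. Deciding which $j-1$ vertices join vertex $1$, requiring the induced graph on those $j$ vertices to be connected, and forbidding all $j(n-j)$ edges from this set to its complement are three independent constraints, so
\[
  \mathbb{P}\big(|\mathcal{C}(1)|=j\big)=\binom{n-1}{j-1}\,\kappa(j)\,(1-q)^{j(n-j)}.
\]
Part \ref{enu:collection_1} will come from a lower bound on $\kappa(j)$ via trees, and part \ref{enu:collection_2} from a convenient cancellation in the conditional probability.

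For \ref{enu:collection_1} I would bound $\kappa(j)$ from below by keeping only the contribution of spanning trees: by Cayley's formula there are $j^{j-2}$ spanning trees on $j$ labelled vertices, each of which equals $\mathbb{G}(j,q)$ with probability $q^{j-1}(1-q)^{\binom{j}{2}-(j-1)}$, and these events are disjoint, so $\kappa(j)\ge j^{j-2}q^{j-1}(1-q)^{\binom{j}{2}-(j-1)}$. Substituting and grouping terms, the factor $\binom{n-1}{j-1}q^{j-1}=\tfrac{1}{(j-1)!}\prod_{i=1}^{j-1}(1-\tfrac{i}{n})$ is $\tfrac{1}{(j-1)!}(1-o(1))$ uniformly over $j\le n^{1/5}$ (since $\sum_{i<j}i/n\le n^{-3/5}$), while all the powers of $1-q$ combine to an exponent bounded above by $jn$, so that part is at least $(1-q)^{jn}=e^{-j}(1-o(1))$, again uniformly. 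Hence $\mathbb{P}(|\mathcal{C}(1)|=j)\ge(1-o(1))\,\frac{j^{j-2}}{(j-1)!}\,e^{-j}$, and Stirling's formula gives $\frac{j^{j-2}}{(j-1)!}e^{-j}=\Theta(j^{-3/2})$ with constants uniform over all $j\ge 1$; this yields the claimed $c\,j^{-3/2}$.

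For \ref{enu:collection_2} I would write the conditional probability as the ratio of the identity above for the event ``$|\mathcal{C}(1)|=j$ and $\mathcal{C}(1)$ contains a cycle'' to the identity for ``$|\mathcal{C}(1)|=j$''. The binomial and the $(1-q)^{j(n-j)}$ factors cancel, leaving
\[
  \mathbb{P}\big(\mathcal{C}(1)\text{ contains a cycle}\mid |\mathcal{C}(1)|=j\big)=\frac{\kappa^{\circ}(j)}{\kappa(j)},
\]
where $\kappa^{\circ}(j)$ is the probability that $\mathbb{G}(j,q)$ is connected and contains a cycle. A connected graph on $j$ vertices with a cycle contains a spanning tree together with at least one further edge, so a first-moment bound over the at most $j^{j-2}\binom{j}{2}$ pairs (spanning tree, extra edge) — each of which is a set of $j$ distinct edges, present with probability $q^{j}$ — gives $\kappa^{\circ}(j)\le j^{j-2}\binom{j}{2}q^{j}$. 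Combined with $\kappa(j)\ge j^{j-2}q^{j-1}(1-q)^{\binom{j}{2}}\ge\tfrac12\, j^{j-2}q^{j-1}$ for $n$ large (using $(1-q)^{\binom{j}{2}}\ge 1-\tfrac{j^2}{2n}\to1$ when $j\le n^{1/5}$), this gives $\frac{\kappa^{\circ}(j)}{\kappa(j)}\le \frac{2\binom{j}{2}}{n}\le \frac{j^{2}}{n}\le n^{-3/5}\to 0$, uniformly over $1\le j\le n^{1/5}$, which is the claim.

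The computations are elementary and I do not expect a genuine obstacle; the only thing to monitor is that every occurrence of $o(1)$, $\Theta(\cdot)$ and ``for $n$ large'' above be uniform over the full range $1\le j\le n^{1/5}$ — this is exactly why the restriction on $j$ is imposed. The two points worth spelling out carefully are the uniform Stirling estimate $\frac{j^{j-2}}{(j-1)!}e^{-j}=\Theta(j^{-3/2})$, valid for every $j\ge1$ and not merely asymptotically (so one should invoke an explicit two-sided Stirling bound and check $j=1,2$ by hand), and the cancellation that reduces \ref{enu:collection_2} to a statement about the fixed-size graph $\mathbb{G}(j,1/n)$ conditioned to be connected.
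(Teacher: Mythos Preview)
Your proof is correct and takes a genuinely different, more elementary route than the paper's. The paper proves both parts via an exploration process: it writes $\{|\mathcal{C}(1)|=j\}$ as the event that the random walk $t\mapsto 1+\sum_{i\le t}(\eta_i-1)$ first hits zero at time $j$, then couples the (dependent) increments $\eta_i$ first to i.i.d.\ $\mathrm{Bin}(n,1/n)$ variables and then to i.i.d.\ $\mathrm{Poi}(1)$ variables, each substitution costing $o(j^{-3/2})$; finally a ballot-type estimate (Addario-Berry--Reed) gives the $cj^{-3/2}$ lower bound. For part~\ref{enu:collection_2} it again uses the exploration and bounds the probability that an explored vertex connects back to an already active vertex, controlling the number of active vertices by a sub-martingale maximal inequality.

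Your argument bypasses all of this by using the exact combinatorial decomposition $\mathbb{P}(|\mathcal{C}(1)|=j)=\binom{n-1}{j-1}\kappa(j)(1-q)^{j(n-j)}$ together with Cayley's formula. This is cleaner and shorter for the specific statement at hand, and the cancellation that reduces~\ref{enu:collection_2} to the ratio $\kappa^{\circ}(j)/\kappa(j)$ in the fixed-size model $\mathbb{G}(j,1/n)$ is a nice observation that avoids any dynamic analysis. The price is that your method is tied to the explicit structure of $\mathbb{G}(n,p)$ (Cayley's count, independent edges), whereas the exploration-plus-ballot approach is more robust and would transfer to other random graph models with less symmetry. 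For the lemma as stated, though, your proof is preferable; just make sure, as you note, to invoke an explicit two-sided Stirling bound so that $\frac{j^{j-2}}{(j-1)!}e^{-j}=\Theta(j^{-3/2})$ holds with uniform constants for every $j\ge 1$.
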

The proof of the above lemma makes use of an \textit{exploration process}, which is a standard algorithmic procedure to reveal the components of a (simple, undirected) graph. This procedure also allows us to establish both statements in the lemma. We refer the reader to e.g.\ \cite{NP07} or \cite{DeARob1} and references therein for other instances where an exploration process has been used to study component sizes in random graphs.

Fix an ordering of the $n$ vertices with $v$ listed first. At every step of the algorithm, each vertex is in one of three possible statuses: \textit{active}, \textit{unseen} or \textit{explored}. Let us denote by $\mathcal{A}_t$,  $\mathcal{U}_t$ and $\mathcal{E}_t$ the (random) sets of active, unseen and explored vertices at the end of step $t\in \mathbb{N}_0$, respectively. Then, for any given $t\in \mathbb{N}_0$, we can partition the vertex set as $[n]=\mathcal{A}_t\cup \mathcal{U}_t\cup \mathcal{E}_t$ (a disjoint union), so that in particular the set of unseen vertices satisfies $\mathcal{U}_t=[n]\setminus (\mathcal{A}_t\cup \mathcal{E}_t)$ at each step $t$.\\ 

\textbf{Algorithm 1}. At time $t=0$, vertex $v$ is declared active whereas all other vertices are declared unseen, so that $\mathcal{A}_0=\{v\}$, $\mathcal{U}_0=[n]\setminus\{v\}$ and $\mathcal{E}_0=\emptyset$. For every $t\in \mathbb{N}$, we first pick $u_t$ according to the following rule:%, the algorithm proceeds as follows. 
\begin{itemize}
	\item [(a)] If $|\mathcal{A}_{t-1}|\geq 1$, we let $u_t$ be the first active vertex (here and in what follows, the term \textit{first} refers to the ordering that we have fixed at the beginning of the procedure).
	\item [(b)] If $|\mathcal{A}_{t-1}|=0$ and $|\mathcal{U}_{t-1}|\geq 1$, we let $u_t$ be the first unseen vertex.
	\item [(c)] If $|\mathcal{A}_{t-1}|=0=|\mathcal{U}_{t-1}|$ (so that $\mathcal{E}_{t-1}=[n]$), we instead halt the procedure.
\end{itemize}
Now, denote by $\mathcal{D}_t$ the set of \textit{unseen} neighbors of $u_t$, i.e., we set
\[\mathcal{D}_t\coloneqq \left\{x\in \mathcal{U}_{t-1}\setminus\{u_t\}:u_t\sim x \right\}.\]
Subsequently we update
\[\mathcal{U}_t\coloneqq \mathcal{U}_{t-1}\setminus (\mathcal{D}_t\cup \{u_t\}),\text{ }\mathcal{A}_t\coloneqq (\mathcal{A}_{t-1}\setminus\{u_t\})\cup \mathcal{D}_t \text{ and }\mathcal{E}_t\coloneqq \mathcal{E}_{t-1}\cup \{u_t\}.\]
%\begin{align*}
%\begin{cases}
%\mathcal{U}_{t-1}\setminus \mathcal{D}_t,& \text{ if $|\mathcal{A}_{t-1}|\geq 1$}\\ 
%\mathcal{U}_{t-1}\setminus (\mathcal{D}_t\cup \{u_t\}), & \text{ if $|\mathcal{A}_{t-1}|=0$}
%\end{cases}
%\end{align*} 
\begin{remark} %LUCA Changed this from an obs to a remark to stay consistent.
	Note that, since in the procedure \textbf{Algorithm 1} we explore \textit{one vertex} at each step, we have $\mathcal{A}_t\cup \mathcal{U}_t\neq \emptyset$ for every $t\leq n-1$ and $\mathcal{A}_n\cup \mathcal{U}_n=\emptyset$ (as $\mathcal{E}_n=[n]$). Thus the algorithm runs for $n$ steps.
\end{remark}
We run the above algorithm on (a realization of) $\mathbb{G}(n,1/n)$. 
Let $\eta_t$ be the (random) number of unseen vertices that we add to the set of active nodes at time $t$ in \textbf{Algorithm 1}. Since at the end of each step $i$ in which $|\mathcal{A}_{i-1}|\geq 1$ we remove the (active) vertex $u_i$ from $\mathcal{A}_{i-1}$ (after having revealed its unseen neighbors), we have the recursion
\begin{itemize}
	\item $|\mathcal{A}_t|=|\mathcal{A}_{t-1}|+\eta_t-1$, if $|\mathcal{A}_{t-1}|>0$;
	\item $|\mathcal{A}_{t}|=\eta_t$, if $|\mathcal{A}_{t-1}|=0$.
\end{itemize}
Observe that, when $|\mathcal{A}_{t-1}|>0$, then, if we denote $\mathcal{F}_{k}$ the $\sigma-$algebra generated by $\eta_1,\dots,\eta_k$ (with $\mathcal{F}_0$ being the trivial $\sigma-$algebra), we have
\begin{equation}\label{laweta}
    (\eta_t\mid \mathcal{F}_{t-1})=_d\text{Bin}(n-(t-1)-|\mathcal{A}_{t-1}|,1/n).
\end{equation}
Moreover, 
\[|\mathcal{C}(v)|=\min\{t\in [n]:|\mathcal{A}_{t}|=0\}.\]
\begin{proof}[Proof of Lemma~\ref{L:collection_RG_facts}] 
We start by establishing \ref{enu:collection_1}. Recall that we are interested in the case where $p=1/n$. Without loss of generality, we can assume that $j \geq j_0$ for some constant $j_0$ as otherwise the desired inequality can be obtained by making $c=c(j_0)$ sufficiently small.

Thanks to the recursive nature of the number of active vertices, we can write 
\begin{equation}\label{first}
    \mathbb{P}(|\mathcal{C}(1)|=j)=\mathbb{P}\Big(1+\sum_{i=1}^t(\eta_i-1)>0\text{ }\forall t\in [j-1], 1+\sum_{i=1}^j(\eta_i-1)=0\Big).
\end{equation}
To lower bound the probability on the right-hand side of (\ref{first}) we perform the following steps. 
\begin{enumerate}
    \item Firstly, we replace the $\eta_i$ with (simpler) i.i.d.\ random variables $\xi_i$ having the $\text{Bin}(n,1/n)$ distribution, thus obtaining a mean-zero, finite variance random walk. This operation of replacing the $\eta_i$ is carried out at a cost $\ll j^{-3/2}$.
    \item Subsequently, we replace the $\xi_i$ with i.i.d.\ random variables $P_i$ having the $\text{Poi}(1)$ distribution. This operation is also carried out at a cost $\ll j^{-3/2}$.
    \item Lastly, we use a \textit{ballot theorem} to bound from below the probability that the (mean-zero) random walk with i.i.d.\ increments $P_i-1$ stays above zero for $j-1$ steps and hits zero at time $j$. This last probability is of order $j^{-3/2}$, whence thanks to (1) and (2) above we can conclude that $j^{-3/2}$ is also a lower bound for the probability of interest.
\end{enumerate}
%More formally, setting $S_t\coloneqq 1+\sum_{i=1}^t(P_i-1)$ and following (1)-(3) we can bound
%\begin{multline*}
%    \mathbb{P}(1+\sum_{i=1}^t(\eta_i-1)>0\text{ }\forall t\in [j-1], 1+\sum_{i=1}^j(\eta_i-1)=0)\\
%    \gtrsim \mathbb{P}(S_t>0\text{ }\forall t\in [j-1], S_j=0)-o(j^{-3/2}).
%\end{multline*}
%The ballot theorem tells us that  
%\[\mathbb{P}(S_t>0\text{ }\forall t\in [j-1], S_j=0)\gtrsim \frac{c}{j^{3/2}},\]
%thus obtaining that indeed 
%\[\mathbb{P}(|\mathcal{C}(1)|=j)\gtrsim \frac{1}{j^{3/2}}.\]

We begin by carrying out the steps (1) and (2). In particular, we show that
    \begin{multline*}
        \mathbb{P}\Big(1+\sum_{i=1}^t(\eta_i-1)>0\text{ }\forall t\in [j-1], 1+\sum_{i=1}^j(\eta_i-1)=0\Big)\\
        \geq \mathbb{P}\Big(1+\sum_{i=1}^t(\xi_i-1)>0\text{ }\forall t\in [j-1], 1+\sum_{i=1}^j(\xi_i-1)=0\Big)-o(j^{-3/2}),
    \end{multline*}
    where the $\xi_i$ are i.i.d.\ with the $\text{Bin}(n,1/n)$ distribution. Then we conclude by carrying out step (3).
    
    In what follows we will need a uniform upper bound on the number of active vertices that we can have at any step of the exploration process. 
    Arguing as in Lemma~4.2 of \cite{DeARob1}, and denoting by $\tau_0$ the first time $t$ at which $|\mathcal{A}_t|=0$, for any $r\in (0,1)$ we have that
    \begin{align*}
        \mathbb{P}(\exists t\in [\tau_0\wedge j]:|\mathcal{A}_t|>x)&\leq \mathbb{P} \big( \exists t\in [ j]:1+\sum_{i=1}^t(\xi_i-1)>x \big)\\
        &\leq 2e^{-rx-rj}\mathbb{E} \big[ \exp(r\text{Bin}(nj,1/n) \big];
    \end{align*}
    The last inequality follows from the classical Doob's sub-martingale inequality, which states that for a non-negative sub-martingale $(X_k:k\in \mathbb{N}_0)$ and for all $N\in \mathbb{N},x>0$, we have
    \[\mathbb{P}(\exists k\in [N]:X_k>x)\leq \mathbb{E}[X_N]/x.\]
    Using that $e^r-1\leq r+r^2$ for $r\in [0,1]$, we see that $\mathbb{E}[\exp(r\text{Bin}(nj,1/n)]\leq e^{rj+r^2j}$
    and minimizing with respect to $r$ we obtain 
    \begin{equation}\label{upactive}
        \mathbb{P}(\exists t\in [\tau_0\wedge j]:|\mathcal{A}_t|>x)\leq 2e^{-x^2/(4j)}.
    \end{equation}
    Let $x\geq 1$ be a positive integer. Define the (good) event $\mathcal{G}\coloneqq \{|\mathcal{A}_t|\leq x\text{ }\forall t\in [\tau_0\wedge j]\}$. Recalling (\ref{laweta}), we let
    \[\eta^+_t\coloneqq \sum_{i=1}^{t-1+|\mathcal{A}_{t-1}|}J_{i,t} \quad \text{ for each }t\in [j],\]
    where $(J_{i,t}:i,t\in \mathbb{N})$ is a (doubly) infinite sequence of i.i.d.\ random variables with the $\text{Ber}(1/n)$ law, also independent of all other random quantities involved. Define $\xi_t\coloneqq \eta_t+\eta^+_t$ for each $t\in [j]$. Note that the collection of $\xi_t$ are i.i.d.\ with $\text{Bin}(n,1/n)$ law. Using that
    \[\Big\{1+\sum_{i=1}^t(\eta_i-1)>0\text{ }\forall t\in [j-1], 1+\sum_{i=1}^j(\eta_i-1)=0\Big\}=\{\tau_0=j\}\]
    and setting $\mathcal{H}_m\coloneqq \{\eta^+_i=0 \text{ }\forall i\leq m\}$,
    we write
    \begin{align*}
        \mathbb{P}&\Big(1+\sum_{i=1}^t(\eta_i-1)>0\text{ }\forall t\in [j-1], 1+\sum_{i=1}^j(\eta_i-1)=0\Big)\\
        &\geq \mathbb{P}\Big(1+\sum_{i=1}^t(\eta_i-1)>0\text{ }\forall t\in [j-1], 1+\sum_{i=1}^j(\eta_i-1)=0,\, \mathcal{G},\, \mathcal{H}_j\Big)\\
        &=\mathbb{P}\Big(1+\sum_{i=1}^t(\xi_i-1)>0\text{ }\forall t\in [j-1], 1+\sum_{i=1}^j(\xi_i-1)=0, \, \mathcal{G},\, \mathcal{H}_{\tau_0\wedge j}\Big)\\
        &\geq \mathbb{P}\Big(1+\sum_{i=1}^t(\xi_i-1)>0\text{ }\forall t\in [j-1], 1+\sum_{i=1}^j(\xi_i-1)=0, \, \mathcal{G} \Big)-\mathbb{P}(\sum_{i=1}^{\tau_0\wedge j}\eta^+_i\geq 1, \, \mathcal{G}).
    \end{align*}

    %%%%%%%%%%%%%
    Let
    \begin{equation*}
         x=  \Big\lceil 8  \sqrt{j\log(j)} \, \Big\rceil = O(j).
    \end{equation*}
   By Markov's inequality we obtain
    \begin{align*}
        \mathbb{P} \Big( \sum_{i=1}^{\tau_0\wedge j}\eta^+_i\geq 1,\mathcal{G} \Big)\leq \mathbb{P}\Big(\sum_{i=1}^{\tau_0\wedge j}\sum_{h=1}^{i-1+x}J_{h,i}\geq 1\Big)&\leq \mathbb{P}\Big(\sum_{i=1}^{ j}\sum_{h=1}^{i+x}J_{h,i}\geq 1\Big)\\
        &=O\Big(\frac{jx}{n}\vee \frac{j^2}{n}\Big)= O\Big(\frac{j^2}{n} \Big).
    \end{align*} 

    Setting $R_t\coloneqq 1+\sum_{i=1}^t(\xi_i-1)$, we arrive at
    \begin{align*}
        \mathbb{P} \Big( 1+\sum_{i=1}^t(\eta_i-1)&>0\text{ }\forall t\in [j-1], 1+\sum_{i=1}^j(\eta_i-1)=0 \Big)\\
        &\geq \mathbb{P}(R_t>0\text{ }\forall t\in [j-1], R_j=0)-\mathbb{P}(\mathcal{G}^c)-O \big(\frac{j^2}{n} \big)\\
        &\geq \mathbb{P}(R_t>0\text{ }\forall t\in [j-1], R_j=0)-2e^{-x^2/(4j)}- O\big(\frac{j^2}{n} \big),
    \end{align*}
    where for the last inequality we have used (\ref{upactive}). Our choice of $x$ guarantees that
    \begin{equation*}
        2e^{-x^2/(4j)} \leq  \frac{2}{j^2}.
    \end{equation*}
    Finally, by Theorem 2.10 of \cite{vdH17}, we can couple the $\xi_i$ with i.i.d.\ random variables $P_i$ with the $\text{Poi}(1)$ law such that $\mathbb{P}(\xi_i\neq P_i)\leq 1/n$ for each $i\in [j]$. A union bound then yields
    \[\mathbb{P}(\exists i\in [j]:\xi_i\neq P_i)\leq \frac{j}{n}= O\big(\frac{j^2}{n} \big) .\]
    Hence, setting $S_t\coloneqq 1+\sum_{i=1}^t(P_i-1)$ for $t\in [j]$, we have
    \begin{align*}
        \mathbb{P} \big( R_t>0\text{ }\forall t\in [j-1], R_j=0 \big) &\geq \mathbb{P} \big( R_t>0\text{ }\forall t\in [j-1], R_j=0, \xi_i=P_i\text{ }\forall i\in [j] \big)\\
        &\geq \mathbb{P} \big( S_t>0\text{ }\forall t\in [j-1], S_j=0 \big)-  O\big(\frac{j^2}{n} \big).
    \end{align*}
    By Theorem $9$ in \cite{Add_Reed}, we obtain
    \[\mathbb{P}\big( S_t>0\text{ }\forall t\in [j-1], S_j=0 \big) \geq \frac{c}{j^{3/2}}\]
    for some constant $c>0$ which depends solely on the law of $P_1$.
    Combining all the previous estimates and choosing a smaller constant $c$, gives the desired conclusion that
    \[\mathbb{P} \big( |\mathcal{C}(1)|=j \big)\geq \frac{c}{j^{3/2}}\]
    since $j^2/n \vee 2/j^2 \leq c' j^{-3/2}$ for any $c' > 0$ whenever $j_0 \leq j \leq n^{1/5}$ for $j_0$ large enough.
    %%%%%%%%%%%%%
    
    We proceed to the proof of \ref{enu:collection_2}, which stated that
\begin{equation*}
    \mathbb{P}(\mathcal{C}(1) \text{ contains a cycle }\mid |\mathcal{C}(1)|=j)\ll 1.
\end{equation*}
To this end, we notice that, with reference to \textbf{Algorithm 1}, the procedure creates a cycle in $\mathcal{C}(1)$ at some step $k\leq j$ if, and only if, it finds an edge between $u_k$ (the vertex under investigation at step $k$) and one of the active vertices. Denoting by $B_k$ the indicator random variable of the event $\{u_k\sim u \text{ for some }u\in \mathcal{A}_{k-1}\}$ and letting $\tau_0$ be as above, we obtain by item \ref{enu:collection_1}
\begin{align*}
    \mathbb{P} \big( \mathcal{C}(1) &\text{ contains a cycle }\mid |\mathcal{C}(1)|=j \big)\\
    &\leq Cj^{3/2}\mathbb{P} \big( \mathcal{C}(1) \text{ contains a cycle, } |\mathcal{C}(1)|=j \big)\\
    &= Cj^{3/2}\mathbb{P} \big( \mathcal{C}(1) \text{ contains a cycle, }  |\mathcal{C}(1)|=j, \tau_0=j \big)\\
    &= Cj^{3/2}\mathbb{P} \big( \exists k\in [j]:u_k\sim u \text{ for some }u\in \mathcal{A}_{k-1},\tau_0=j \big)\\
    &\leq Cj^{3/2}\mathbb{P} \Big( \sum_{k\in [j]}^{}B_k\geq 1,|\mathcal{A}_{k}|\leq x\text{ }\forall k\in [j-1] \Big) +\mathbb{P}\big( \exists t\in [\tau_0\wedge j]:|\mathcal{A}_t|>x \big)\\
    &\leq Cj^{3/2}\mathbb{P} \Big(\sum_{k\in [j]}^{}B_k \mathbbm{1}_{\{|\mathcal{A}_{k-1}|\leq x\}}\geq 1 \Big)+2e^{-x^2/(4j)}\\
    &=O\big(j^{5/2}xn^{-1}\big)+2e^{-x^2/(4j)},
\end{align*}
where for the third inequality we have used the bound in (\ref{upactive}) whereas the last step is a direct consequence of Markov's inequality and the fact that an edge in $\mathbb{G}(n,1/n)$ is present with probability $1/n$.

By letting
\begin{equation*}
    x = 8 \sqrt{j \log(n) },
\end{equation*}
we obtain that
\begin{equation*}
    O\big(j^{5/2}xn^{-1}\big)+2e^{-x^2/(4j)}=O\big(j^{3} n^{-1} \sqrt{\log(n)}  \big) + O\big(n^{-2}\big),
\end{equation*}
which is $o(1)$ as $j\leq n^{1/5}$. Whence we obtain
\[\mathbb{P}(\mathcal{C}(1) \text{ is cycle free}\mid |\mathcal{C}(1)|=j)=1-o(1),\]
establishing item \ref{enu:collection_2} and completing the proof of the lemma.
\end{proof}

We proceed with a bound on the probability that the components of typical vertices are disjoint.

\begin{lemma} \label{L:same_component}
    There exists a constant $c>0$ such that, for any two distinct $u,v \in [n]$, we have
        \begin{equation}
            \P \big( \mathcal{C}(u) = \mathcal{C}(v) \big) \leq c n^{-1/3}.
        \end{equation}
\end{lemma}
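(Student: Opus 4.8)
The plan is to combine exchangeability of the vertex labels with the crude a priori bound $\E[|\mathcal{C}_1|] = O(n^{2/3})$. What makes this cheap is that the bound $c\,n^{-1/3}$ we are asked to establish exceeds the true order $n^{-2/3}$ of $\P(\mathcal{C}(u)=\mathcal{C}(v))$ by a whole factor of $n^{1/3}$, so there is no need to control the susceptibility $\E\big[\sum_i |\mathcal{C}_i|^2\big]$ at its sharp order $n^{4/3}$ — any lossy bound will do.

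First I would note that, since $u \in \mathcal{C}(u)$ always, the events $\{\mathcal{C}(u) = \mathcal{C}(v)\}$ and $\{u \leftrightarrow v\}$ coincide, and that by invariance of $\P_{n,1/n}$ under permutations of $[n]$ this probability is the same for every ordered pair of distinct vertices. Averaging over all such pairs, and counting within each component the ordered pairs of distinct vertices it contains,
\begin{equation*}
    \P\big( \mathcal{C}(u) = \mathcal{C}(v) \big) \;=\; \frac{1}{n(n-1)}\, \E\Big[ \sum_{\substack{x,y\in[n]\\ x\ne y}} \mathbf{1}_{\{x\leftrightarrow y\}} \Big] \;=\; \frac{1}{n(n-1)}\, \E\Big[ \sum_{i\ge1} |\mathcal{C}_i|\big(|\mathcal{C}_i|-1\big) \Big].
\end{equation*}
Bounding $\sum_{i} |\mathcal{C}_i|^2 \le \big(\max_i |\mathcal{C}_i|\big)\sum_i |\mathcal{C}_i| = |\mathcal{C}_1|\cdot n$ deterministically, the right-hand side is at most $\E[|\mathcal{C}_1|]/(n-1)$, so it remains only to show $\E[|\mathcal{C}_1|] = O(n^{2/3})$.

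For this last step I would write $\E[|\mathcal{C}_1|] = n^{2/3}\int_0^\infty \P\big(|\mathcal{C}_1| > s\,n^{2/3}\big)\,ds$ and split the integral at $s = 1$: the part $s\le 1$ contributes at most $n^{2/3}$, and for $s > 1$ one invokes a quantitative upper-tail bound of the form $\P(|\mathcal{C}_1| > s\,n^{2/3}) \le C s^{-2}$ — much stronger (stretched-exponential) tails being available in \cite{DeARob1, DeA25} — so that $\int_1^\infty \P(|\mathcal{C}_1| > s\,n^{2/3})\,ds$ is a finite constant. Hence $\E[|\mathcal{C}_1|] = O(n^{2/3})$ and the lemma follows.

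The only ingredient here that is not routine is the quantitative tail bound on $|\mathcal{C}_1|$: Theorem~\ref{AP_T:C_1_size}, asserting merely tightness of $|\mathcal{C}_1|/n^{2/3}$, does not by itself suffice to bound $\E[|\mathcal{C}_1|]$, since one genuinely needs the tail to decay faster than $s^{-1}$. This is the only place external input beyond the present section is required, and I expect locating the appropriate statement in \cite{DeARob1, DeA25} to be the main (minor) obstacle; everything else is the exchangeability identity and the deterministic inequality $\sum_i |\mathcal{C}_i|^2 \le |\mathcal{C}_1|\sum_i|\mathcal{C}_i|$.
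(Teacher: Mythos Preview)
Your argument is correct, but it proceeds differently from the paper's. The paper splits on the event $\{|\mathcal{C}(u)|\le A n^{2/3}\}$: the complementary event has probability $O(A^{-1/2}n^{-1/3})$ by Lemma~\ref{L:RG_typical} (the typical--component tail bound, already established in the paper), while on the event itself one observes that $v\in\mathcal{C}(u)$ forces some edge from $v$ into a set of size at most $An^{2/3}$, which by Markov costs $O(An^{-1/3})$. Your route instead passes through the susceptibility identity and the deterministic inequality $\sum_i|\mathcal{C}_i|^2\le n\,|\mathcal{C}_1|$, reducing everything to $\E[|\mathcal{C}_1|]=O(n^{2/3})$.

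The trade-off is exactly the one you flag: the paper's argument stays self-contained, needing only Lemma~\ref{L:RG_typical}, whereas your approach is algebraically cleaner but imports a quantitative upper-tail bound on $|\mathcal{C}_1|$ from outside (Theorem~\ref{AP_T:C_1_size} alone, as you note, does not give the first moment). Both give the same $O(n^{-1/3})$ bound, and both are lossy by a factor $n^{1/3}$ relative to the true order---indeed, the identity $\P(v\in\mathcal{C}(u))=(\E[|\mathcal{C}(u)|]-1)/(n-1)$ shows that sharpening either argument to $O(n^{-2/3})$ would amount to controlling the susceptibility $\E[|\mathcal{C}(1)|]$ at its correct order $n^{1/3}$.
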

\begin{proof}
Note that for $A\geq 1$ constant we have, by Lemma~\ref{L:RG_typical},
\[\mathbb{P} \big( \mathcal{C}(u) = \mathcal{C}(v) \big) \leq \mathbb{P}\big( \mathcal{C}(u) = \mathcal{C}(v),|\mathcal{C}(u)|\leq An^{2/3} \big)+\frac{C}{A^{1/2}n^{1/3}}.\]
Next, we observe that if $\mathcal{C}(u) = \mathcal{C}(v)$, then there must be a $z\in \mathcal{C}(u)$ such that $v\sim z$. Hence
\[\mathbb{P}(\mathcal{C}(u) = \mathcal{C}(v),|\mathcal{C}(u)|\leq An^{2/3})\leq \mathbb{P}(\exists z\in \mathcal{C}(u):v\sim z,|\mathcal{C}(u)|\leq An^{2/3}).\]
Now 
\begin{align*}
    \mathbb{P}(\exists z\in \mathcal{C}(u):v\sim z,|\mathcal{C}(u)|\leq An^{2/3})&=\mathbb{P}\Big(\sum_{z\in \mathcal{C}(u)}\mathbbm{1}_{\{z\sim v, |\mathcal{C}(u)|\leq An^{2/3}\}}\geq 1\Big)\\
    &\leq \mathbb{E}\Big[\sum_{z\in \mathcal{C}(u)}\mathbbm{1}_{\{z\sim v, |\mathcal{C}(u)|\leq An^{2/3}\}}\Big]\\
    &\leq \sum_{z\leq \lceil An^{2/3} \rceil }\mathbb{P}(z\sim v)=O(An^{-1/3}).
\end{align*}
Thus we conclude that 
\[\P(\mathcal{C}(u) = \mathcal{C}(v) )=O(An^{-1/3}),\]
which is the desired result (since $A\geq 1$ is a constant, independent of $n$).
%MAYBE WE COULD ALSO DO:
%\begin{align*}
%    \P( C(u) = C(v)) = \P(u \in C(v)) &\leq \sum_{k=1}^{A n^{2/3}} \P( v \in C(u) \mid |C(u)| = k) \P( |C(u)| = k) + \P( |C(u) \geq A n^{2/3}) \\
%    &\leq \sum_{k=1}^{A n^{2/3}} \frac{k-1}{n-1} \P( |C(u)| = k) + \frac{c}{A^{1/2} n^{1/3}} \\
%    &\leq \frac{A n^{2/3}}{n} + \frac{c}{A^{1/2} n^{1/3}}.
%\end{align*}
%The 2nd inequality follows from the fact that
%\begin{equation*}
%    \sum_{x \neq u} \E[\mathbbm{1}_{x \in C(u)} \mid |C(u)| = k] = k-1
%\end{equation*}
%and exchangibility of the vertices. (the actual order of the sum is $\frac{A^{1/2}}{n^{1/3}} + \frac{1}{A^{1/2} n^{1/3}}$ which is also of %order $n^{-1/3}$)}
\end{proof}

%Recall that the total variation distance between two probability distributions $p\coloneqq (p_x:x\in \mathcal{X})$ and $q\coloneqq (q_x:x\in \mathcal{X})$ on a countable set $\mathcal{X}$, denoted by denoted by $|| p-q||_{TV}$, satisfies 
%\begin{equation*}
%    || p-q||_{TV}= 2^{-1}\sum_{x\in \mathcal{X}}^{}|p_x-q_x|.
%\end{equation*}

\subsection{Total variation distance}

We end this section by giving a bound on the total variation distance between the graph distances in two random graphs with different numbers of vertices. 

We recall that the total variation distance between two random variables $X_1$ and $X_2$ taking values in a countable set $\mathcal{X}$ is defined by
\[ \Vert X_1-X_2 \Vert_{\TV}\coloneqq \sup_{A\subseteq \mathcal{X}} \big| \mathbb{P}(X_1\in A)-\mathbb{P}(X_2\in A) \big|.\]
It is known (see e.g.\ \cite[Theorem~2.9]{vdH17}) that
\begin{equation} \label{eq:TV_couple}
    \Vert X_1-X_2 \Vert_{\TV} = \inf_{X'_i=_dX_i,\text{ }i\in \{1,2\}}\mathbb{P} (X'_1\neq X'_2),
\end{equation}
where the infimum is over all couplings of $X_1$ and $X_2$. As we are assuming that $X_1$ and $X_2$ are discrete, there exists a coupling for which the infimum in \eqref{eq:TV_couple} is attained. Later on we shall make use of the following straightforward lemma.
\begin{lemma} \label{L:TV_fact}
    Suppose that $X,Y \geq 0$ are two random discrete variables with $X \leq M$, for some constant $M$. Then
    \begin{equation*}
        \E[X] \leq \E[Y] + M \Vert X - Y \Vert_{\TV}.
    \end{equation*}
\end{lemma}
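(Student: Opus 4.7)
The plan is to invoke the coupling characterisation of the total variation distance stated in \eqref{eq:TV_couple}. Since both $X$ and $Y$ take values in a countable set, the infimum there is attained: there exist random variables $X', Y'$ defined on a common probability space with $X' \stackrel{d}{=} X$, $Y' \stackrel{d}{=} Y$, and
\begin{equation*}
    \P(X' \neq Y') = \Vert X - Y \Vert_{\TV}.
\end{equation*}
Working with this optimal coupling is the only nontrivial input; the rest is a decomposition of the expectation.

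Next, I would split $\E[X] = \E[X']$ according to whether $X'$ and $Y'$ agree. Writing
\begin{equation*}
    \E[X'] = \E[X' \mathbbm{1}_{\{X' = Y'\}}] + \E[X' \mathbbm{1}_{\{X' \neq Y'\}}],
\end{equation*}
I handle the two terms separately. On the event $\{X' = Y'\}$ we may replace $X'$ by $Y'$, so using $Y' \geq 0$,
\begin{equation*}
    \E[X' \mathbbm{1}_{\{X' = Y'\}}] = \E[Y' \mathbbm{1}_{\{X' = Y'\}}] \leq \E[Y'] = \E[Y].
\end{equation*}
For the second term, the deterministic bound $X' \leq M$ (which holds since $X' \stackrel{d}{=} X$) gives
\begin{equation*}
    \E[X' \mathbbm{1}_{\{X' \neq Y'\}}] \leq M \, \P(X' \neq Y') = M \Vert X - Y \Vert_{\TV}.
\end{equation*}

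Combining the two bounds yields $\E[X] \leq \E[Y] + M \Vert X - Y \Vert_{\TV}$, as required. There is no real obstacle here; the only subtlety is ensuring that the optimal coupling genuinely exists, which follows from the discreteness of $X$ and $Y$ as noted right before the statement of the lemma.
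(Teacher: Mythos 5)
Your proof is correct and follows the same approach as the paper: invoke the optimal coupling from \eqref{eq:TV_couple}, split $\E[X]$ over $\{X'=Y'\}$ and $\{X'\neq Y'\}$, and bound the two pieces by $\E[Y]$ (using $Y\geq 0$) and $M\Vert X-Y\Vert_{\TV}$ (using $X\leq M$), respectively. The paper states this in one line, while you spell out the steps; there is no substantive difference.
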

\begin{proof}
    Choosing a suitable coupling gives with \eqref{eq:TV_couple} that
    \begin{equation*}
        \E[X] = \E[X \mathbbm{1}_{X = Y}] + \E[X \mathbbm{1}_{X \neq Y}] \leq \E[Y] + M \Vert X - Y \Vert_{\TV}. \qedhere
    \end{equation*}
\end{proof}
We provide one final estimate about distances between pairs of vertices in a typical component when we remove a subset of vertices. Namely, removing less than $n^{1/3}$ many vertices does not drastically change the distribution of said distances.
\begin{lemma} \label{L:d_TV_remove}
    Let $b \in \mathbb{N}$ with $b \leq n-1$, and let $x \in [n-b]$ be a vertex. Denote by $\mathcal{C}_{m,q}(x)$ the component of $x$ in $\mathbb{G}(m,q)$, and let $u_x$ and $v_x$ be two vertices selected uniformly at random from $\mathcal{C}_{m,q}(x)$. Then there exists a universal constant $C > 0$ (independent of $n$ and $b$) such that, for all $n$ large enough,
    \begin{equation}
        \Vert d_{\cC_{n,1/n}(x)}(u_x,v_x) - d_{\cC_{n-b,1/n}(x)}(u_x,v_x)\Vert_{\TV} \leq C b n^{-1/3}.
    \end{equation}
\end{lemma}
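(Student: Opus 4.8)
The plan is to construct an explicit coupling of the two random graphs under which the two distance variables coincide with high probability, and then to bound the failure probability using Lemma~\ref{L:same_component}. Realize $\mathbb{G}(n-b,1/n)$ as the subgraph of $\mathbb{G}(n,1/n)$ induced on the vertex set $[n-b]$, using the same edge indicators; this is a legitimate coupling since each edge is retained with probability $1/n$ in both marginals. Since $x \in [n-b]$, the component $\cC_{n-b,1/n}(x)$ is always contained in $\cC_{n,1/n}(x)$, and the two are equal — as vertex sets and as subgraphs of $K_n$ — precisely on the event
\begin{equation*}
    E := \big\{ \cC_{n,1/n}(x) \cap \{n-b+1, \dots, n\} = \emptyset \big\},
\end{equation*}
because connectivity within a component contained in $[n-b]$ is witnessed by paths using only vertices of $[n-b]$ and edges among them, which are present in $\mathbb{G}(n-b,1/n)$ as well.

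On $E$ we additionally sample the pair $(u_x,v_x)$ uniformly from the common component and use it for both graphs, so that $d_{\cC_{n,1/n}(x)}(u_x,v_x) = d_{\cC_{n-b,1/n}(x)}(u_x,v_x)$ on $E$; on $E^c$ we sample the two pairs arbitrarily (independently, say). This is a coupling of the two distance random variables under which they agree on $E$, so by the characterization \eqref{eq:TV_couple} of the total variation distance,
\begin{equation*}
    \big\Vert d_{\cC_{n,1/n}(x)}(u_x,v_x) - d_{\cC_{n-b,1/n}(x)}(u_x,v_x) \big\Vert_{\TV} \le \P(E^c).
\end{equation*}
It then remains to bound $\P(E^c)$. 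Since $\{y \leftrightarrow x\} = \{\cC(x) = \cC(y)\}$, a union bound combined with Lemma~\ref{L:same_component} (applied to each distinct pair $x \neq y$) gives
\begin{equation*}
    \P(E^c) = \P\big( \exists\, y \in \{n-b+1,\dots,n\} : y \leftrightarrow x \big) \le \sum_{y=n-b+1}^{n} \P\big( \cC(x) = \cC(y) \big) \le b\, c\, n^{-1/3},
\end{equation*}
so the conclusion follows with $C = c$.

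There is no serious obstacle here; the only points requiring a little care are checking that the induced-subgraph construction genuinely has the right marginals, and coupling the uniform selections of $u_x, v_x$ so that they agree on $E$ — both are routine. One could alternatively argue directly with an exploration process started at $x$, noting that the explorations in the two graphs agree until (if ever) a vertex of $\{n-b+1,\dots,n\}$ is discovered, but the component-comparison argument above is cleaner and reduces the whole statement to the already-established Lemma~\ref{L:same_component}.
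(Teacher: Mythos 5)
Your proof is correct and uses essentially the same coupling as the paper (the inclusion coupling obtained by realizing $\mathbb{G}(n-b,1/n)$ as the induced subgraph of $\mathbb{G}(n,1/n)$ on $[n-b]$), and the same failure event — namely that some vertex of $\{n-b+1,\dots,n\}$ lands in $\cC_{n,1/n}(x)$. The only difference is cosmetic: the paper re-derives the bound on this failure probability inline via Markov's inequality and Lemma~\ref{L:RG_typical}, whereas you observe that it is exactly a union bound of $b$ applications of Lemma~\ref{L:same_component}, which is a tidy reuse of an already-established estimate.
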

\begin{proof}
%Let us denote by $\mathcal{C}_n(x)$ and $\mathcal{C}_{n-b}(x)$ the components of vertex $x$ in the random graphs $\mathbb{G}(n,p)$ and $\mathbb{G}(n-b,p)$, respectively. 
We can construct $\mathcal{C}_{n,1/n}(x)$ from $\mathcal{C}_{n-b, 1/n}(x)$ by adding $b$ vertices $\{ n-b+1,\dots,n \}$ to $\mathbb{G}(n-b,p)$ and connecting $i\in [n]\setminus [n-b]$ to each $j\in \mathcal{C}_{n-b}(x)$ independently with probability $1/n$. This gives a coupling of the components $\mathcal{C}_{n, 1/n}(x),\mathcal{C}_{n-b,1/n}(x)$ such that $\mathcal{C}_{n-b,1/n}(x) \subseteq \mathcal{C}_{n,1/n}(x)$. If under this coupling all vertices in $[n]\setminus [n-b]$ are not connected to $\mathcal{C}_{n-b,1/n}(x)$, then the components agree so that all the distances (in particular the distance between two uniform random points) agree with one another. Consequently, in view of \eqref{eq:TV_couple} %a coupling of the distances $d_{n,p,\cC(x)}(u_x,v_x), d_{n-b,p, C(x)}(u_x,v_x)$. Moreover, we notice that the latter are different provided that there exist $i\in [n]\setminus [n-b]$ to each $j\in \mathcal{C}_{n-b}(x)$ such that $i\sim j$. But then
    \begin{align*}
        \Vert d_{n,p,\cC(x)}(u_x,v_x) - d_{n-b,p, C(x)}(u_x,v_x)\Vert_{\TV} 
        &\leq \mathbb{P}\Big(\bigcup_{i\in [n]\setminus [n-b]}^{} \bigcup_{j\in \mathcal{C}_{n-b}(x)}^{}\{i\sim j\}\Big)\\
        &\leq b \, \mathbb{P}\Big(\bigcup_{j\in \mathcal{C}_{n-b}(x)}^{}\{n \sim j\}\Big)\\
        &=b \, \mathbb{P}\Big(\sum_{j=1}^{|\mathcal{C}_{n-b}(x)|+1} \mathbbm{1}_{\{n \sim j\}}\geq 1\Big)\\
        &\leq b \,\Big( \mathbb{P}\Big(\sum_{j=1}^{2n^{2/3}} \mathbbm{1}_{\{ n \sim j\}}\geq 1\Big) + O(n^{-1/3}) \Big) \\ %+bCn^{-1/3}\\
        &\leq \frac{Cb}{n^{1/3}}
    \end{align*}
    for some $C>0$ (which is independent of $b$ and $n$), where the second-last step used Lemma~\ref{L:RG_typical} (the size of $\cC_{n-b}(x)$ is greater than $n^{2/3}$ with probability at most of order $n^{-1/3}$, and the last step follows from Markov's inequality. %inequality is markfor the last step we have used Markov's inequality and the second-last inequality used that, in the critical \ErdosRenyi model, the component of any given vertex $u\in [n]$ contains less than $n^{2/3}$ vertices with probability at least $1-O(n^{-1/3})$ {: this fact needs a source from somewhere - perhaps Lemma~\ref{L:RG_typical}?}.
\end{proof}

%%%%%%%%%%%%%%%%%%%%%% NEW SECTION %%%%%%%%%%%%%%%%%%%%%%%%%
%%%%%%%%%%%%%%%%%%%%%% Repeat times %%%%%%%%%%%%%%%%%%%%%%%%%

\section{Repeat times in random graphs} \label{S:size_biased_GNP}
Conditional on the realization of the \ErdosRenyi random graph $\mathbb{G}(n,1/n)$ with components $\cC_1, \cC_2, \ldots$, denote by $S_1, S_2, \ldots$ a (random) sequence of connected components where, for each $i \geq 1$, we independently pick a component with probability proportional to its size. More precisely, we define a (conditional) probability measure $\bP^n_S$ such that, for any $i \geq 1$ and any components $\cC_{k_1}, \ldots \cC_{k_i}\subset \mathbb{G}(n,1/n)$, we have
\begin{equation} \label{eq:choose_S}
    \bP^n_S \Big( \big(S_1, \ldots, S_i \big)  = \big( \cC_{k_1}, \ldots, \cC_{k_i} \big) \Big) = \prod_{j=1}^{i} \frac{|\cC_{k_j}|}{n}.
\end{equation}
Notice that sampling a component in this fashion is equivalent to choosing a vertex uniformly at random and then taking its component. This observation allows us to write
\begin{equation} \label{eq:E_f(S)}
    \bE_S^n[f(S_i)] = \frac{1}{n} \sum_{x \in V} f \big( \cC(x) \big),
\end{equation}
for any function $f$ on graphs. Furthermore, observe that $\bP^n_S(\cdot)$ is a random variable whose distribution is determined by the randomness of the underlying \ErdosRenyi random graph. 

We denote by %\marginpar{ \tiny U: actually $\leq N$, with $N\leq n$ being the (random) number of components}
\begin{equation}
    t_1 \coloneqq  \min \big\{ i \geq 1 : \exists j < i \text{ with } S_i = S_j \big\}  \leq n 
\end{equation}
the first time at which a component appears twice in the sampling procedure described above. The following theorem, which gives information on the distribution of $t_1$, is essentially a combination of the main results from \cite{Ald97} and \cite{CP00}.

\begin{theorem} \label{T:repeat_times}
    Let
    \begin{equation}
        s_n := \frac{1}{n} \sqrt{\sum_{i \geq 1}|\cC_i|^2 }.
    \end{equation}
    Then there exists non-negative random variables $(\theta_i)_{i \geq 1}$ such that, for any $r \geq 0$, 
    \begin{equation*}
       \bP^n_S \big( s_n t_1 >  r \big) \xrightarrow{(d)} e^{-\frac{1}{2} (1 - \sum_{i\geq 1}\theta_i^2 ) r^2} \prod_{i \geq 1}(1 + \theta_i r) e^{- \theta_i r},
    \end{equation*}
    where $\xrightarrow{(d)}$ denotes convergence in distribution.
\end{theorem}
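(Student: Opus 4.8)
The statement connects two classical results: Aldous's description of the limiting "birthday problem" for size-biased sampling of critical random graph components (the masses $(|\cC_i|/n)_{i\geq 1}$ rescaled by $n^{-2/3}$ converge to the excursion lengths of a drifted Brownian motion, i.e., to the points of a certain Poisson-type structure), and the Camarri–Pitman / Aldous analysis of the generalized birthday problem with given weights, where the repeat time, suitably normalized, converges to the first point of a Poisson process whose rate depends on the weights. The random variables $(\theta_i)_{i\geq 1}$ are exactly the scaled component masses in the limit: $\theta_i = \lim_n n^{-2/3}|\cC_i|\cdot(\text{normalization})$, with $\sum_i \theta_i^2$ being the a.s.-finite "dust" that the excursion lengths carry.

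**Proof plan.**

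\begin{proof}[Proof sketch]
The plan is to work conditionally on the graph, applying a deterministic birthday-problem estimate with explicit error bounds, and then pass to the limit using Aldous's scaling limit for the component sizes.

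\textbf{Step 1: a quenched birthday formula.} Fix a realization of $\mathbb{G}(n,1/n)$ with component sizes $c_i := |\cC_i|$, $\sum_i c_i = n$. Write $p_i := c_i/n$ for the sampling probabilities. By definition, $\bP^n_S(t_1 > r)$ — the probability that the first $r$ samples are all distinct components — equals
\begin{equation*}
  \bP^n_S(t_1 > m) = \sum_{\text{distinct } i_1,\dots,i_m} p_{i_1}\cdots p_{i_m} = m!\,e_m(p_1,p_2,\dots),
\end{equation*}
the $m$-th elementary symmetric polynomial in the $p_i$ times $m!$. This is precisely the Camarri–Pitman setup for the "generalized birthday problem" with weight sequence $(p_i)$. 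Their Theorem (the main result of \cite{CP00}, see also Aldous \cite{Ald97}) gives: if $p^{(n)}_i$ is a triangular array of probabilities with $\max_i p^{(n)}_i \to 0$ and $\sum_i (p^{(n)}_i)^2 / \big(\sum_i (p^{(n)}_i)^2\big) $ ... more precisely, if $\sigma_n^2 := \sum_i (p^{(n)}_i)^2 \to 0$ and the renormalized sequence $(p^{(n)}_i/\sigma_n)_{i\geq 1} \to (\theta_i)_{i\geq 1}$ in an appropriate $\ell^2$-type sense with $\sum_i \theta_i^2 \leq 1$, then $\sigma_n t_1 \Rightarrow T$ where
\begin{equation*}
  \bP(T > r) = e^{-\frac12(1-\sum_i\theta_i^2)r^2}\prod_{i\geq 1}(1+\theta_i r)e^{-\theta_i r}.
\end{equation*}
The left-hand scaling $\sigma_n = \frac1n\sqrt{\sum_i c_i^2} = s_n$ matches the statement.

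\textbf{Step 2: identify the limiting weights via Aldous.} The remaining input is the convergence of the (size-ordered) normalized weights. By Aldous's scaling limit for the critical \ErdosRenyi random graph \cite{Ald97}, with $p=1/n$ we have
\begin{equation*}
  n^{-2/3}\big(|\cC_1|,|\cC_2|,\dots\big) \xrightarrow{(d)} (\gamma_1,\gamma_2,\dots)
\end{equation*}
in $\ell^2_\downarrow$, where $(\gamma_i)$ are the ordered excursion lengths of a Brownian motion with parabolic drift; in particular $\sum_i\gamma_i^2 < \infty$ a.s. Since $s_n = n^{-1}\sqrt{\sum c_i^2} = n^{-1/3}\sqrt{\sum (n^{-2/3}c_i)^2}$, the $\ell^2_\downarrow$-convergence gives $n^{1/3}s_n \xrightarrow{(d)} S := \sqrt{\sum_i\gamma_i^2}$ jointly with the above, and hence
\begin{equation*}
  \Big(\frac{|\cC_i|/n}{s_n}\Big)_{i\geq 1} = \Big(\frac{n^{-2/3}|\cC_i|}{n^{1/3}s_n}\Big)_{i\geq 1} \xrightarrow{(d)} \Big(\frac{\gamma_i}{S}\Big)_{i\geq 1} =: (\theta_i)_{i\geq 1}
\end{equation*}
in $\ell^2_\downarrow$, with $\sum_i\theta_i^2 = 1$ a.s. (So, in this \ErdosRenyi case, the leading Gaussian factor $e^{-\frac12(1-\sum\theta_i^2)r^2}$ actually degenerates to $1$; I keep the general form to match \cite{CP00}.) Also $\max_i p_i^{(n)} = |\cC_1|/n = n^{-1/3}\cdot n^{-2/3}|\cC_1| \to 0$ a.s. By Skorokhod's representation theorem, realize all these convergences almost surely on a common probability space.

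\textbf{Step 3: combine quenched and annealed.} On the Skorokhod space, for a.e.\ realization the deterministic weight sequences $(p_i^{(n)})$ satisfy the hypotheses of the Camarri–Pitman theorem with limit $(\theta_i)$, so the quenched laws converge: $\bP^n_S(s_n t_1 > r) \to e^{-\frac12(1-\sum\theta_i^2)r^2}\prod_i(1+\theta_i r)e^{-\theta_i r}$ for every continuity point $r$, almost surely. This is exactly convergence in distribution of the random variable $\bP^n_S(s_n t_1 > r)$ to the stated limit, proving the theorem. One should check that $r\mapsto \bP(T>r)$ is continuous (it is, since $T$ has a density), so the convergence holds for all $r\geq 0$.
\end{proof}

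**Main obstacle.** The delicate point is Step 1–2's interface: the Camarri–Pitman / Aldous birthday theorem requires convergence of the *renormalized* weight sequence together with the asymptotic negligibility $\max_i p_i/\sigma_n \to 0$, and one must make sure this holds for *almost every* quenched realization (not merely on average) — this is why the $\ell^2_\downarrow$ form of Aldous's limit and Skorokhod coupling are essential, rather than convergence of individual coordinates. A secondary subtlety is verifying that the small components (the "dust", $\sum_{i>K}\gamma_i^2$) are genuinely controlled: one needs the $\ell^2$-convergence to be uniform enough that truncating at a large index $K$ introduces a uniformly small error in the infinite product, which again is exactly what $\ell^2_\downarrow$-convergence provides. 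The rest is bookkeeping with the scalings $n^{-2/3}$ for masses and $n^{1/3}$ for $s_n$.
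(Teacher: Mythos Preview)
Your proposal is correct and follows essentially the same route as the paper: invoke Aldous's $\ell^2_\downarrow$ scaling limit for the component sizes, pass to almost-sure convergence via Skorokhod, verify that $p_{n,i}/s_n \to \theta_i = \gamma_i/\sqrt{\sum_j \gamma_j^2}$ for each $i$, and then apply Theorem~4 of Camarri--Pitman quenched. One small slip: in your ``Main obstacle'' you list the asymptotic negligibility $\max_i p_i/\sigma_n \to 0$ as a required hypothesis, but this neither holds here (since $\theta_1>0$ a.s.) nor is it needed---Camarri--Pitman's theorem allows nonzero limiting $\theta_i$, which is precisely what produces the infinite product; your Step~2 already gets this right.
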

\begin{proof}
    Denote by $(\gamma_i)_{i \geq 1}$ the sequence of excursions, ordered in decreasing length, of a Brownian motion that has drift $-t$ at time $t$ and is reflected at its running minimum. That is, the excursions of the process given by
    \begin{equation*}
        B(t) \coloneqq \big( W(t) - \frac{1}{2}t^2 \big) - \min_{0 \leq s \leq t} \big( W(t) - \frac{1}{2}t^2 \big) \quad \forall t \geq 0,
    \end{equation*}
    where $W(t)$ is a standard Brownian motion.
    
    As established by Aldous \cite{Ald97}, the random sequence $(\cC_1/n^{2/3}, \cC_2/n^{2/3}, \ldots)$, which has zero entries after the last component, converges in distribution to the lengths $(|\gamma_1|, |\gamma_2|, \ldots)$ in the space $\ell^2_{\downarrow}$ consisting of square summable, non-increasing sequences. As this space is separable, by the Skorokhod Representation Theorem, we may work in a probability space $(\Omega, \mathcal{F}, \tilde{\P})$ such that this convergence is almost sure. Note also that $\bP^n_S(\cdot)$ is completely determined by the values of $(\cC_i)_{i \geq 1}$. 

    On this probability space, we almost surely have the convergence 
    \begin{equation*}
        \limn \sum_{i \geq 1} \Big( \frac{|\cC_i|}{n^{2/3}} - |\gamma_i| \Big)^2  = 0,
    \end{equation*}
    and by Cauchy-Schwarz %we can also use the reverse triangle inequality if we prefer it to be shorter: | ||C_i||_l^2 - ||gamma_i||_l^2| \leq || C_i - gamma_i||_l^2
    \begin{align*}
        \Big| \sum_{i \geq 1} \Big( \frac{|\cC_i|}{n^{2/3}} \Big)^2  - \sum_{i \geq 1} |\gamma_i|^2 \Big| &= \Big| \sum_{i \geq 1} \Big( \frac{|\cC_i|}{n^{2/3}} - |\gamma_i| \Big) \Big(\frac{|\cC_i|}{n^{2/3}}  + |\gamma_i| \Big) \Big| \\
        &\leq \Big( \sum_{i \geq 1} \Big( \frac{|\cC_i|}{n^{2/3}} - |\gamma_i| \Big)^2 \Big)^{\frac{1}{2}} \cdot \Big( \sum_{i \geq 1} \Big( \frac{|\cC_i|}{n^{2/3}} + |\gamma_i| \Big)^2 \Big)^{\frac{1}{2}} \\
        &\leq \Big( \sum_{i \geq 1} \Big( \frac{|\cC_i|}{n^{2/3}} - |\gamma_i| \Big)^2 \Big)^{\frac{1}{2}} \cdot \Big( 2\sum_{i \geq 1} \Big( \frac{|\cC_i|}{n^{2/3}} \Big)^2 + 2\sum_{i \geq 1} |\gamma_i|^2 \Big)^{\frac{1}{2}},
    \end{align*}
    which almost surely goes to zero as $n \rightarrow \infty$. Therefore, we have that  
    \begin{equation*}
        \limn \frac{\sqrt{\sum_{i \geq 1} |\gamma_i|^2}}{\sqrt{\sum_{i \geq 1} \big( |\cC_i|/n^{2/3} \big)^2 }} = 1
    \end{equation*}
    almost surely. 
    Furthermore, if we set
    \begin{equation*}
        p_{n,i} := \frac{|\cC_i|}{n},
    \end{equation*}
    then the limit
    \begin{equation}
        \limn \frac{p_{n,i}}{s_n} = \limn \frac{|\cC_i|}{\sqrt{\sum_{i \geq 1}|\cC_i|^2 }} = \frac{|\gamma_i|}{\sqrt{\sum_{i \geq 1}|\gamma_i|^2 }} =: \theta_i
    \end{equation}
    exists for any $i \geq 1$.

    Theorem~4 in \cite{CP00} then gives (for a fixed realization of $\omega \in \Omega)$ that for all $r \geq 0$
    \begin{equation*}
       \limn \bP^n_S \big( s_n t_1 >  r \big) = e^{-\frac{1}{2} (1 - \sum_{i\geq 1}\theta_i^2 ) r^2} \prod_{i \geq 1}(1 + \theta_i r) e^{- \theta_i r}.
    \end{equation*}
    As this equality holds for almost all $\omega$ in the probability space $(\Omega, \mathcal{F}, \tilde{\P})$, the convergence in distribution follows.
\end{proof}

We use Theorem~\ref{T:repeat_times} to establish the following lemma.

\begin{lemma} \label{L:repeat_time}
For any $\varepsilon > 0$, there exists $r(\varepsilon), n_0(\varepsilon) > 0$ such that for $n \geq n_0$
\begin{equation} \label{eq:tn1_E_bound}
    \E\Big[ \bP^n_S \Big(t_1 > n^{1/3} r(\varepsilon) \Big) \Big] \geq 1 - \varepsilon.
\end{equation}
% Furthermore, for any $\delta > 0$ there exists $r(\delta), n_0(\delta) > 0$ such that for $n \geq n_0$
% \begin{equation} \label{eq:tn1_prob_bound}
%     \P \Big( \bP^n_S \big(t_1 > n^{1/3} r(\delta) \big) > 1 - \delta  \Big) \geq 1 - \delta.
% \end{equation}

% \begin{equation}
%     \bP \big( \frac{n^{1/3}}{r(\varepsilon)} < t_1 < n^{1/3} r(\varepsilon) \big) \geq 1 - \varepsilon.
% \end{equation}
\end{lemma}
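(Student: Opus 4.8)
The goal is to transfer the distributional limit in Theorem~\ref{T:repeat_times} into a statement about $\E[\bP^n_S(t_1 > n^{1/3} r)]$. First I would observe that $n^{1/3} r = s_n t_1 \cdot (n^{1/3} r / s_n)$, so that controlling $t_1 > n^{1/3} r$ amounts to controlling $s_n t_1 > s_n n^{1/3} r$. By Theorem~\ref{AP_T:C_1_size} together with Aldous's result (the sum $\sum_i |\cC_i|^2$ is of order $n^{4/3}$ in probability, since the largest components are of order $n^{2/3}$ and the tail $\sum |\gamma_i|^2$ converges), the quantity $s_n = n^{-1}\sqrt{\sum_i |\cC_i|^2}$ is of order $n^{-1/3}$ in probability: more precisely, for any $\delta>0$ there is $M=M(\delta)$ with $\P(M^{-1} n^{-1/3} \leq s_n \leq M n^{-1/3}) \geq 1-\delta$ for $n$ large. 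On this event, $\{t_1 > n^{1/3} r\} \supseteq \{ s_n t_1 > M r \}$, so $\bP^n_S(t_1 > n^{1/3} r) \geq \bP^n_S(s_n t_1 > Mr)$.

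Next I would quantify the limiting law. Writing $F(\rho) := e^{-\frac{1}{2}(1-\sum_i \theta_i^2)\rho^2}\prod_{i\geq 1}(1+\theta_i \rho)e^{-\theta_i \rho}$ for the (random) limit, the key point is that $F(\rho) \to 1$ as $\rho \to 0$, almost surely and in fact uniformly enough: indeed $0 \leq \sum_i \theta_i^2 \leq 1$ and each factor $(1+\theta_i\rho)e^{-\theta_i\rho} \in (0,1]$, and one checks $(1+x)e^{-x} \geq e^{-x^2}$ for $x \geq 0$, so $F(\rho) \geq e^{-\frac{1}{2}\rho^2 - \rho^2 \sum_i \theta_i^2} \geq e^{-\frac{3}{2}\rho^2}$, a deterministic lower bound. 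Hence, given $\eps>0$, choose $\rho_0$ small enough that $e^{-\frac32 \rho_0^2} \geq 1 - \eps/4$; then $F(\rho) \geq 1-\eps/4$ deterministically for all $\rho \leq \rho_0$.

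Now I assemble the pieces. Fix $\eps>0$. Pick $\delta = \eps/4$ and the corresponding $M=M(\delta)$ as above, and set $r(\eps) := \rho_0 / M$ so that $Mr(\eps) = \rho_0$. By Theorem~\ref{T:repeat_times}, $\bP^n_S(s_n t_1 > \rho_0) \xrightarrow{(d)} F(\rho_0)$, and since $F(\rho_0) \geq 1-\eps/4$ almost surely (a deterministic bound), for $n$ large the random variable $\bP^n_S(s_n t_1 > \rho_0)$ is, with probability at least $1-\eps/4$, at least $1 - \eps/2$ — this uses that if $X_n \Rightarrow X$ with $X \geq a$ a.s.\ then $\P(X_n \geq a - \eta) \to 1$ for every $\eta>0$, applied with $a = 1-\eps/4$ and $\eta = \eps/4$. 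Combining with the event $\{M^{-1}n^{-1/3} \leq s_n \leq Mn^{-1/3}\}$ of probability $\geq 1-\eps/4$, we get that with probability at least $1 - \eps/2$ the bound $\bP^n_S(t_1 > n^{1/3} r(\eps)) \geq \bP^n_S(s_n t_1 > \rho_0) \geq 1-\eps/2$ holds. Since $\bP^n_S(\cdot) \in [0,1]$, taking expectations yields $\E[\bP^n_S(t_1 > n^{1/3} r(\eps))] \geq (1-\eps/2)^2 \geq 1 - \eps$, as desired.

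\textbf{Main obstacle.} The one genuinely delicate point is justifying that $s_n$ is of order exactly $n^{-1/3}$ with high probability, i.e.\ controlling $\sum_i |\cC_i|^2$ from both sides. The upper bound requires a tail estimate on $\sum_i |\cC_i|^2$ (not just on $|\cC_1|$); this follows from Aldous's $\ell^2_\downarrow$-convergence, since $\sum_i(|\cC_i|/n^{2/3})^2 \Rightarrow \sum_i |\gamma_i|^2 < \infty$, so the family $\{n^{-4/3}\sum_i|\cC_i|^2\}_n$ is tight and bounded away from $0$ (the latter already from $|\cC_1| \geq M^{-1}n^{2/3}$ via Theorem~\ref{AP_T:C_1_size}). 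Everything else is a deterministic manipulation of the explicit limit function $F$ and the elementary inequality $(1+x)e^{-x}\geq e^{-x^2}$.
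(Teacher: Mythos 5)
Your proof is correct and follows essentially the same strategy as the paper: combine tightness of $n^{1/3}s_n$ (from Aldous's $\ell^2_{\downarrow}$-convergence) with a deterministic, $\omega$-independent lower bound on the limiting survival function, then let the constant $r$ shrink. The only cosmetic differences are that the paper derives the lower bound $F(\tilde r)\ge e^{-\tilde r^2/2}$ via monotonicity of $x\mapsto e^{x^2\tilde r^2/2}(1+x\tilde r)e^{-x\tilde r}$ and applies dominated convergence to $\E[\bP^n_S(s_n t_1\le\tilde r)]$, whereas you use $(1+x)e^{-x}\ge e^{-x^2}$ and an explicit weak-convergence/union-bound argument; also, you state a two-sided bound on $s_n$ but only the upper bound is actually used, which matches the paper.
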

\begin{proof}
    %We only prove \eqref{eq:tn1_E_bound} as this easily implies \eqref{eq:tn1_prob_bound} using the Paley–Zygmund inequality and the fact that the expectation squared is trivially bounded by $1$. 
    % For $A > 0$, denote by $\mathcal{B}(A)$ the (bad) event that the size of $\cC_1$ is less than $n^{2/3}/A$. By Theorem~\ref{AP_T:C_1_size} we have that $\P(\mathcal{B}(A)) < \varepsilon/3$ if $A$ and $n$ are chosen large enough. 
    Recall that we defined in the statement of the last theorem
    \begin{equation*}
        s_n = \frac{1}{n} \sqrt{\sum_{i \geq 1}|\cC_i|^2 } = \frac{1}{n^{1/3}} \sqrt{\sum_{i \geq 1} (|\cC_i|/n^{2/3})^2 } .
    \end{equation*}
    It follows from the convergence of the scaled component sizes to the Brownian excursions in proof of Theorem~\ref{T:repeat_times}, and by the fact that the excursions are almost surely square summable, that there exists a constant $B = B(\varepsilon) > 0$ such that
    \begin{equation*}
        \P( s_1 \leq B n^{-1/3} ) \geq 1 - \varepsilon/3.
    \end{equation*}
    for $n$ large enough. Consequently, for any $\Tilde{r} > 0$
    \begin{equation*}
        \E \Big[ \bP^n_S \big( t_1 \leq \frac{\Tilde{r}}{B} n^{1/3} \big) \Big] \leq  \E \Big[ \bP^n_S \big( s_n t_1 \leq \Tilde{r} \big) \Big] + \varepsilon/3.
    \end{equation*}
    Using Theorem~\ref{T:repeat_times} together with the Dominated Convergence Theorem (note that $\bP^n_S( s_n t_1 \leq \Tilde{r})$ is bounded) we have, for all $n$ large enough (depending on $\varepsilon$)
    \begin{equation} \label{eq:bound_repeat_expectation}
        \E \Big[ \bP^n_S \big( t_1 \leq \frac{\Tilde{r}}{B} n^{1/3} \big) \Big] \leq \tilde{\E}\Big[1 -  e^{-\frac{1}{2} (1 - \sum_{i\geq 1}\theta_i^2 ) \Tilde{r}^2} \prod_{i \geq 1}(1 + \theta_i \Tilde{r}) e^{- \theta_i \Tilde{r}} \Big] + 2\varepsilon/3.
    \end{equation}
    Notice that for any $\Tilde{r} \geq 0$ the function
    \begin{equation*}
        e^{\frac{1}{2} x^2  \Tilde{r}^2}(1 + x \Tilde{r}) e^{- x \Tilde{r}}
    \end{equation*}
    is increasing in $x$, and so as $\theta_i \geq 0$ we obtain 
    \begin{equation*}
        e^{-\frac{1}{2} (1 - \sum_{i\geq 1}\theta_i^2 ) \Tilde{r}^2} \prod_{i \geq 1}(1 + \theta_i \Tilde{r}) e^{- \theta_i \Tilde{r}} \geq e^{-\frac{1}{2} \Tilde{r}^2}.
    \end{equation*}
    In particular, the right-hand side of \eqref{eq:bound_repeat_expectation} is bounded from above by 
    \begin{equation*}
        1 - e^{-\frac{1}{2} \Tilde{r}^2} + 2 \varepsilon/3 .
    \end{equation*}
    The result follows by choosing $\Tilde{r}$ small enough depending on $\varepsilon$, and letting $r(\varepsilon) = \Tilde{r}/B$.
\end{proof}

\begin{remark}
    One may also obtain that the event $\{ t_1 \leq r(\varepsilon) n^{1/3} \}$ occurs with probability at least $1 - \varepsilon$ provided that $r(\varepsilon)$ is large enough. Indeed, with high probability, the largest component $\cC_1$ has size at least $n^{2/3}/A$, and $t_1$ is less than the first time $\cC_1$ is selected twice. This can be bounded by the sum of two independent geometric random variables with means $A n^{1/3}$. 
\end{remark}

%%%%%%%%%%%%%%%%%%%%%% NEW SECTION %%%%%%%%%%%%%%%%%%%%%%%%%
%%%%%%%%%%%%%%%%%%%%%% Proof of main Theorem %%%%%%%%%%%%%%%%%%%%%%%%%

\section{Proof of the lower bound in Theorem~\ref{T:diam_extreme}} \label{S:lower_bound}

We start this section by briefly recalling the Aldous-Broder algorithm (see \cite{Bro89,Ald90}) to generate weighted USTs. Let $(X_i)_{i \geq 0}$ be a random walk on the weighted graph $(G, \weight)$. Formally, this is a Markov chain on the vertices of $G$, started at some (arbitrary) vertex $ X_0 = v_0$, with one-step transition probability to jump from a vertex $u$ to a neighboring vertex $v\sim u$ given by
\begin{equation*}
    p(u,v) \coloneqq \frac{\weight(u,v)}{\sum_{x \sim u} \weight(u,x)}.
\end{equation*}
For the Aldous-Broder algorithm, we run the random walk until the first time every vertex is visited. Whenever a previously unvisited vertex is seen by the random walk $X_i$, the Aldous-Broder algorithm adds the previous edge $(X_{i-1}, X_i)$ to the tree. The resulting tree is distributed as the weighted UST as in \eqref{eq:PomegaT}.

%%%%%%%%%%%%%%%%%%%%%% NEW SUBSECTION %%%%%%%%%%%%%%%%%%%%%%%%%
%%%%%%%%%%%%%%%%%%%%%% Lower bound %%%%%%%%%%%%%%%%%%%%%%%%%

\subsection{UST estimates}
We briefly collect two estimates about (weighted) USTs that will be used in the proof of Theorem~\ref{T:diam_extreme}.  In a sequence of works, Aldous \cite{Ald91a, Ald91b, Ald93} introduced a fractal object called the \textit{continuum random tree}, which arises as a universal limiting object of USTs on high-dimensional graphs. In particular, the (limiting) distribution of distances between finitely many points in a UST is well understood. 

We first give the precise formula for the distribution of the distance between two vertices in a UST on the complete graph, which we use to give some (weak) bound on the expected distance between two randomly selected vertices. 

\begin{lemma} \label{L:dist_u_v_UST_Km}
    Let $K_m$ be the complete graph on $m \geq 2$ vertices and denote by $\cT_m$ a UST on $K_m$. If $u,v \in K_m$ are two distinct vertices, then for $1 \leq L \leq m-1$
    \begin{equation} \label{eq:laplacian_walk_distance}
        \bP( d_{\cT_m}(u,v) \geq L) =  \prod_{k=1}^{L-1} \frac{m-k-1}{m} = \frac{1}{m^{L-1}} \frac{(m-2)!}{(m-1-L)!}.
    \end{equation}
    As a consequence, there exists a constant $C > 0$ such that, if $u$ and $v$ are two vertices chosen independently and uniformly at random from $V(K_m)$, then
    \begin{equation} \label{eq:expected_dist_UST_Km}
        \bE[ d_{\cT_m}(u,v) ] \geq C \sqrt{m}.
    \end{equation}
\end{lemma}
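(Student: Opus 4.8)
The formula \eqref{eq:laplacian_walk_distance} is a classical identity; I would derive it via the Laplacian-walk (or Aldous--Broder) description of the UST on $K_m$. Think of growing the UST by Broder's algorithm started at $u$: the tree path from $u$ to $v$ is the loop-erasure of a simple random walk on $K_m$ started at $u$ and stopped at $v$, and on the complete graph this loop-erased walk has a very clean structure. Concretely, condition on the vertices already in the tree (equivalently, already ``burnt'' by the loop erasure); if $k-1$ vertices other than $u,v$ lie on the path so far, then the next step of the loop-erased walk reaches $v$ with probability $1/m$ and otherwise lands on one of the $m-k-1$ not-yet-used vertices (the walk from the current endpoint picks a uniform neighbour among the $m-1$ others, and after loop-erasing, the relevant conditional probability of extending the path rather than hitting $v$ is $(m-k-1)/(m-1)$ to extend among fresh vertices versus $1/(m-1)$ to hit $v$; renormalising, $\bP(\text{path extends to length} \ge k+1 \mid \text{length} \ge k) = (m-k-1)/m$ once one accounts correctly for the geometric number of ``wasted'' steps). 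Multiplying these conditional survival probabilities for $k=1,\dots,L-1$ gives $\bP(d_{\cT_m}(u,v)\ge L)=\prod_{k=1}^{L-1}\frac{m-k-1}{m}$, and rewriting the product as $\frac{(m-2)(m-3)\cdots(m-L)}{m^{L-1}}=\frac{1}{m^{L-1}}\frac{(m-2)!}{(m-1-L)!}$ yields the stated closed form. I would present this cleanly by citing the known distribution (e.g.\ Aldous's work on the CRT limit of the UST on $K_m$, or a direct loop-erasure computation) rather than belabouring the renormalisation bookkeeping.

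For the lower bound \eqref{eq:expected_dist_UST_Km}, note that the distribution of $d_{\cT_m}(u,v)$ does not depend on the choice of distinct $u,v$, so it suffices to lower bound $\bE[d_{\cT_m}(u,v)]$ for fixed distinct $u,v$ (if $u=v$ the distance is $0$ but this happens with probability $1/m$, costing only a factor $(1-1/m)$, harmless for $m\ge 2$). Using $\bE[d_{\cT_m}(u,v)]=\sum_{L\ge 1}\bP(d_{\cT_m}(u,v)\ge L)\ge \sum_{L=1}^{\lfloor\sqrt m\rfloor}\bP(d_{\cT_m}(u,v)\ge L)$, I would show that for $L\le c\sqrt m$ the survival probability is bounded below by a constant. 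Indeed, for such $L$,
\[
\log \bP(d_{\cT_m}(u,v)\ge L) = \sum_{k=1}^{L-1}\log\Big(1-\frac{k+1}{m}\Big) \ge -\sum_{k=1}^{L-1}\frac{k+1}{m}\cdot\frac{1}{1-(k+1)/m} \ge -C'\frac{L^2}{m}
\]
as long as $(k+1)/m\le 1/2$, i.e.\ $L\le m/2$. Hence for $L\le \sqrt m$ we get $\bP(d_{\cT_m}(u,v)\ge L)\ge e^{-C'}=:c_0>0$, and therefore $\bE[d_{\cT_m}(u,v)]\ge c_0(\lfloor\sqrt m\rfloor - 1)\cdot(1-1/m) \ge C\sqrt m$ for a suitable absolute constant $C>0$ (for $m$ large; small $m$ are handled by adjusting $C$ since the expectation is a fixed positive number for each fixed $m\ge 2$).

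The only mild subtlety — which I would treat carefully rather than as routine — is the passage from the known exact law \eqref{eq:laplacian_walk_distance} being stated for a fixed pair $u\neq v$ to the averaged statement with $u,v$ uniform and independent: one just splits on whether $u=v$. Everything else is elementary estimation of the product. I do not expect a genuine obstacle here; the one place to be slightly careful is ensuring the inequality $\log(1-x)\ge -x/(1-x)$ is applied only in the range $x\le 1/2$, which is exactly why the sum is truncated at $L\sim\sqrt m$ (well inside the range where all factors are bounded away from $0$). If one wanted the matching upper bound $\bE[d_{\cT_m}(u,v)]=\Theta(\sqrt m)$ one would also use $\log(1-x)\le -x$ to get $\bP(d_{\cT_m}(u,v)\ge L)\le e^{-L(L-1)/(2m)}$ and sum, but the lemma only asks for the lower bound, so I would stop there.
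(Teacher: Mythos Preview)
Your proposal is correct. For the exact formula \eqref{eq:laplacian_walk_distance}, your approach via the loop-erased/Laplacian random walk is exactly what the paper does; the paper carries out the computation explicitly, showing that on $K_m$ the Laplacian walk (simple random walk conditioned to hit $v$ before returning to the already-visited set $\{y_0,\dots,y_{k-1}\}$) jumps to any given fresh vertex with probability exactly $1/m$, whence $\bP(Y_k \neq v \mid Y_0,\dots,Y_{k-1}) = (m-k-1)/m$. As a small caution, your one-step heuristic (fresh with probability $(m-k-1)/(m-1)$, $v$ with probability $1/(m-1)$, then ``renormalise'') does not by itself produce the factor $1/m$ --- the correct answer really does require the full Laplacian-walk conditioning through arbitrarily many revisits --- but since you explicitly plan to do the direct loop-erasure computation rather than rely on the heuristic, this is only a presentational point.

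For the lower bound \eqref{eq:expected_dist_UST_Km} your route differs from, and is more elementary than, the paper's. The paper applies Stirling's approximation to the closed form $\tfrac{(m-2)!}{m^{L-1}(m-1-L)!}$ to see that its logarithm is $O(1)$ when $L$ is of order $\sqrt{m}$. You instead bound $\sum_{k=1}^{L-1}\log\bigl(1-(k+1)/m\bigr) \geq -C'L^2/m$ directly via $\log(1-x) \geq -x/(1-x)$ on the range $x\le 1/2$, avoiding Stirling altogether. Both arguments give $\bP(d_{\cT_m}(u,v) \geq L) \geq c_0$ for all $L \leq \sqrt{m}$, after which summing the tail and absorbing the $u=v$ case (probability $1/m$) is identical; your version is a touch more transparent about where the $\sqrt{m}$ scale emerges.
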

\begin{proof}
    Equation \eqref{eq:laplacian_walk_distance} will follow by the Laplacian random walk representation of the loop erased random walk. Denote by $\bP_x$ the law of a random walk $(X_i)_{i\geq 1}$ with $X_1 = x$ and let $\tau_S$ (resp.\ $\tau_S^+$) be the first hitting (resp. return) time of a set $S$ of vertices, where we write $\tau_v\coloneqq \tau_{\{v\}}$. We write $\bP^Y$ for the law of a Laplacian random walk $(Y_i)_{i \geq 0}$ started at $Y_0 = y_0 = u$ and stopped when hitting $v$. Conditional on the previous trajectory, the one-step transition probabilities are
    \begin{equation*}
        \bP^Y \big( Y_k = y_k \mid (Y_0, \ldots, Y_{k-1} ) = (y_0, \ldots, y_{k-1}) \big) = \bP_{y_{k-1}}( X_1 = y_k \mid \tau_{v} < \tau^+_{\{ y_0, \ldots, y_{k-1}\}})
    \end{equation*}
    for $k \geq 1$. The distribution of the (whole) path between $u$ and $v$ is equal to that of the distribution of the Laplacian random walk from $u$ to $v$, see e.g.\ \cite[Exercise 4.1]{LP16}.

    Let $A \subseteq V(K_m)$ and let $a \in A$. On the complete graph, given $x\neq v$ and $x,v \not\in A$ we have that
    \begin{align*}
        \bP_x( \tau_v < \tau_A) &= \frac{1}{|A| + 1}, \\
        \bP_a( \tau_v < \tau^+_A) &= \frac{1}{m-1} + \frac{1}{m-1} \sum_{x \not\in \{v\} \cup A} \bP_x( \tau_v < \tau_A) \\
        &= \frac{1}{m-1} + \frac{m - |A| -1 }{(m-1)(|A| + 1)} = \frac{m}{m-1} \cdot \frac{1}{|A| + 1},
    \end{align*}
    and hence for $y_k \not\in \{y_1, \ldots, y_{k-1}, v\}$
    \begin{align*}
        \bP_{y_{k-1}}( X_1 = y_k  \mid \tau_{v} < \tau^+_{\{ y_0, \ldots, y_{k-1}\}}) &= \frac{ \frac{1}{m -1 } \cdot \bP_{y_k} ( \tau_{v} < \tau_{\{ y_0, \ldots, y_{k-1}\}} )}{ \bP_{y_{k-1}}(\tau_v < \tau^+_{\{ y_0, \ldots, y_{k-1}\}}) } = \frac{1}{m}.
    \end{align*}
    For an arbitrary trajectory $(Y_0, \ldots Y_{k-1})$ of the Laplacian random walk (with $Y_0 = u$, $Y_i \neq v$ for each $i \leq i-1$), it then follows that
    \begin{equation*}
        \bP^Y \big(Y_{k} \neq v \mid (Y_0, \ldots, Y_{k-1}) \big) %= \sum_{y_k \not\in \{Y_0, \ldots, Y_{k-1}, v \}} \bP_{y_{k-1}}( X_1 
         = \frac{m -(k+1)}{m}
    \end{equation*}
    since there are $m-(k+1)$ many vertices in $V(K_m) \setminus \{y_1, \ldots, y_{k-1}, v \}$. This readily gives \eqref{eq:laplacian_walk_distance} since the distance $d_{\cT_m}(u,v)$ is at least $L$ if and only if the Laplacian random walk has not terminated in the $(L-1)$-step.

    To deduce the inequality \eqref{eq:expected_dist_UST_Km}, notice that the event $\{u \neq v\}$ occurs with probability $1- 1/m$, so that
     \begin{equation*}
        \bP( d_{\cT_m}(u,v) \geq L) \geq \bP( d_{\cT_m}(u,v) \geq L \mid u \neq v)(1- 1/m).
    \end{equation*}
    Using Stirling's approximation one can show that the logarithm applied to the right hand side of \eqref{eq:laplacian_walk_distance} equals
    \begin{multline*}
        - (L-1) \log (m) + (m-2) \log (m-2)  - (m-2) + \frac{1}{2} \log (2\pi (m-2) ) \\
        - (m-1-L) \log(m-1-L) + (m-1 - L) - \frac{1}{2} \log (2\pi (m-1 - L) ) - O\big( \frac{1}{m} \big)
    \end{multline*}
    When $L$ is of order $\sqrt{m}$ the above expression is of constant order\footnote{For larger $L$, the right hand side of \eqref{eq:laplacian_walk_distance} decays like $\exp(-L^2/m)$. Hence, the distances between fixed pairs of vertices do not get much larger than $\sqrt{m}$ either.}. Thus, for distinct vertices $u$ and $v$
    \begin{equation*} 
        \bP( d_{\cT_m}(u,v) \geq L) \geq C
    \end{equation*} 
    from which \eqref{eq:expected_dist_UST_Km} follows.
\end{proof}
Kirchhoff's formula for USTs (see e.g.\ \cite[Section 4.2]{LP16}) states that 
\begin{equation*}
    \bP_\cT \big( \{u,v\} \in \cT \big) = \weight(u,v) \effR{}{u}{v},
\end{equation*}
where $\effR{}{u}{v}$ is the effective resistance between $u$ and $v$. Whenever $\gamma$ is chosen large enough, the main contribution to the effective resistance between two vertices in the same component will come from the edges in the component. Using a coupling between an \ErdosRenyi random graph and the realization of the weights as briefly described in Section~\ref{SS:ER-graphs}, i.e., the retained edges correspond to edges $e$ with weight $\weight(e) = n^{1 + \gamma}$ so that
\begin{equation*}
    \cC(u) = \big\{ v \in V : \exists \text{ a path between $u$ and $v$ of edges $e$ with } \weight(e) = n^{1+\gamma} \big\}. 
 \end{equation*}
We will make use of the following lemma, which is essentially the same as Lemma~4.3 of \cite{MSS24}.
\begin{lemma}[cf.\ {\cite[Lemma~4.3]{MSS24}}] \label{L:UST_path_inside}
    Denote by $\pi_\cT(u,v)$ the path in the tree $\cT$ connecting the vertices $u$ and $v$. Then
    \begin{equation*}
        \bP_\cT \Big( \exists u,v \in V(K_n), e \in E(K_n) \text{ with } v \in \cC(u), e \in \pi_\cT(u,v), \weight(e) = 1 \Big) \leq n^{4-\gamma}.
    \end{equation*}
\end{lemma}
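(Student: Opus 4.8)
The plan is a first-moment bound after reducing the event to a cleaner one. Suppose $u,v,e=\{a,b\}$ witness it, so in particular $e\in\cT$, and deleting $e$ from $\cT$ splits it into two subtrees, one containing $u$ and one containing $v$. Since $v\in\cC(u)$ there is a path from $u$ to $v$ all of whose edges have weight $n^{1+\gamma}$, and this path must cross the cut, so it contains an edge $f=\{a',b'\}$ with $\weight(f)=n^{1+\gamma}$ whose endpoints lie on opposite sides; as $e$ is the unique edge of $\cT$ across that cut, $e\in\pi_\cT(a',b')$. Hence the event in the statement is contained in
\[
\bigcup_{\,e\in E(K_n):\,\weight(e)=1}\ \ \bigcup_{\,f=\{a',b'\}\in E(K_n):\,\weight(f)=n^{1+\gamma}} \big\{\, e\in\pi_\cT(a',b')\,\big\}.
\]

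The core estimate is, for a fixed light edge $e$ and a fixed heavy edge $f=\{a',b'\}$, that $\bP_\cT(e\in\pi_\cT(a',b'))\le n^{-(1+\gamma)}$. To see this I would use the edge swap $\Phi(T):=(T\setminus\{e\})\cup\{f\}$, defined on spanning trees $T$ with $e\in\pi_T(a',b')$ (for such $T$ one has $e\in T$ and $f\notin T$). For such $T$, the graph $T\cup\{f\}$ is connected with $n$ edges, hence has a unique cycle, namely $\{f\}\cup\pi_T(a',b')$, which contains $e$; therefore $\Phi(T)$ is again a spanning tree, and $\Phi$ is injective since $(\Phi(T)\setminus\{f\})\cup\{e\}=T$. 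Because $\Phi$ multiplies the product of edge weights by $\weight(f)/\weight(e)=n^{1+\gamma}$, it follows that
\[
\bP_\cT\big(e\in\pi_\cT(a',b')\big)=\frac{1}{Z^{\weight}}\sum_{T:\,e\in\pi_T(a',b')}\ \prod_{g\in T}\weight(g)=\frac{n^{-(1+\gamma)}}{Z^{\weight}}\sum_{T:\,e\in\pi_T(a',b')}\ \prod_{g\in\Phi(T)}\weight(g)\le n^{-(1+\gamma)},
\]
where the last step uses injectivity of $\Phi$ to enlarge the sum to all spanning trees, giving $Z^{\weight}$.

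Since there are at most $\binom n2\le n^2$ light edges and at most $\binom n2\le n^2$ heavy edges, a union bound over the display above gives a probability bounded by $\binom n2^2 n^{-(1+\gamma)}\le n^{3-\gamma}\le n^{4-\gamma}$, proving the lemma (with a little room to spare). The one step requiring genuine care is the graph-theoretic verification that $\Phi$ is a well-defined injection into the spanning trees of $K_n$ and that $e\in\pi_T(a',b')$ is exactly the condition making the swap valid and reversible; everything else — the cut/crossing argument in the reduction and the counting — is routine. An alternative to the swap would be to evaluate $\bP_\cT(e\in\pi_\cT(a',b'))$ via Kirchhoff's formula and the transfer-current theorem, but the swap is shorter and loses nothing.
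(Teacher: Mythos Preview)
Your proof is correct. The reduction to pairs (light $e$, heavy $f$) with $e\in\pi_\cT(a',b')$ is sound, and the edge-swap map $\Phi$ is a well-defined injection on spanning trees (since $e\ne f$ forces $f\notin T$ whenever $e\in\pi_T(a',b')$), giving the clean bound $\bP_\cT(e\in\pi_\cT(a',b'))\le n^{-(1+\gamma)}$; the union bound then yields $n^{3-\gamma}\le n^{4-\gamma}$.

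The paper takes a slightly different route. Rather than reducing first to heavy-edge endpoints, it works directly with an arbitrary pair $u,v$ in the same component and a light edge $e$, bounding $\bP_\cT(e\in\pi_\cT(u,v))$ via Kirchhoff's formula and the transfer-current identity: the probability equals the (absolute) current through $e$ under unit flow from $u$ to $v$, which is at most $\weight(e)\,\effR{}{u}{v}\le 1\cdot n\cdot n^{-(1+\gamma)}=n^{-\gamma}$ because the heavy path from $u$ to $v$ has length at most $n$. A union bound over $O(n^2)$ pairs and $O(n^2)$ edges then gives $n^{4-\gamma}$. Your approach trades the electrical-network input for an explicit combinatorial swap, at the cost of the short reduction step; in return it is entirely self-contained and actually saves a factor of $n$ in the final bound (because endpoints of a single heavy edge have effective resistance $\le n^{-(1+\gamma)}$ rather than $n^{-\gamma}$). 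Either route suffices for the application.
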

\begin{proof}[Proof sketch]
    Suppose that $e = \{u,v\}$ is an edge with weight $\weight(e) = 1$, and that $u$ and $v$ are in the same component. Then there is a path of length at most $n$ consisting of edges with weights equal to $n^{1+\gamma}$, so that by Kirchhoff's formula
    \begin{equation*}
         \bP_\cT \big( e \in \cT \big) = \weight(e) \effR{}{u}{v} \leq n \frac{1}{n^{1+\gamma}} = n^{-\gamma}.
    \end{equation*}
    Extending this to all pairs of vertices and all edges may be done in the same way as in the proof of Lemma~4.3 in \cite{MSS24}.
\end{proof}

\subsection{Contraction of components} \label{SS:contract}
Lemma \ref{L:UST_path_inside} establishes that, if $\gamma \geq 5$, then with probability at least $1 - n^{-1}$ any two vertices inside the same component are connected by a path that stays inside said component. In this case, the tree $\cT$ consists of a union of subtrees $T_1, \ldots, T_k$ on the components $\cC_1, \ldots, \cC_k$, respectively, joined together by edges with weight equal to $1$. Given any realization of the subtrees $T_1, \ldots, T_k$ on the components, by the Spatial Markov property of the UST (see e.g.\ \cite[Section 2.2.1] {HN19} for more details), we may contract each subtree $T_i$ into a single vertex to obtain a new graph $G' = G'(T_1, \ldots, T_k)$ (with weights $\weight'$ inherited from $\weight$) such that for any $F \subseteq E$
\begin{equation} \label{eq:Spatial_Markov_Tk}
        \bP^\weight_G \big( \cT = F \mid \bigcup_{i=1}^k T_i \subseteq \cT \big) = \bP^{\weight'}_{G'} \big( \cT \cup \bigcup_{i=1}^k T_i = F \big),
\end{equation}
where we identified $\cT$ with its set of edges. Furthermore, each vertex $A \in V(G')$ corresponds to some connected component $\cC_A$ in $G$, and the weight between any two distinct vertices $A,B \in V(G')$, corresponding to two disjoint components $\cC_A, \cC_B$ in $G$, is given by 
\begin{equation}
    \weight'(A,B) = \sum_{a \in \cC_A, b \in \cC_B} \weight(a,b) = \sum_{a \in \cC_A, b \in \cC_B} 1 = |\cC_A| |\cC_B|. \label{eq:weight_contracted}
\end{equation}
To make a coupling between $(X_i)_{i\geq 1}$ and the process of picking components $(S_i)_{i \geq i}$ from Section~\ref{S:size_biased_GNP} simpler, we replace all self-loops centered at a vertex $A \in V(G')$ (obtained from the contraction) with a single self-loop of weight $|\cC_A|^2$. This leaves the law of the UST unchanged (as self-loops never appear in the UST), and it makes \eqref{eq:weight_contracted} also hold for the case when $A = B$.

We may run a random walk $(X_i)_{i \geq 1}$ on $G'$ by starting at a random vertex $X_1$ corresponding to uniformly choosing a vertex in $G$ and taking the contracted component. If we denote by $\bP_X$ the corresponding law of the walk $(X_i)_{i\geq 1}$, then for any vertices $A$ and $B$ in the contracted graph $G'$ corresponding to components $\cC_A$ and $\cC_B$, we have by \eqref{eq:weight_contracted} that
    \begin{align*}
        \bP_X( X_{k+1} = B \mid X_{k} = A) &= \frac{|\cC_A| |\cC_B|}{\sum_{W \in V(G')} |\cC_A| |\cC_W| }  \\
        &= \frac{|\cC_A| |\cC_B|}{ |\cC_A| n } = \frac{|\cC_B|}{n}.
     \end{align*}
The following lemma now follows immediately.
\begin{lemma} \label{L:couple_X_S}
    There exists a coupling between the random walk $(X_i)_{i \geq 1}$ on $G'$ and the components $(S_i)_{i \geq 1}$ from Section~\ref{S:size_biased_GNP} such that $X_i = S_i$ almost surely for all $i \geq 1$.
\end{lemma}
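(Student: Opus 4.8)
The plan is to observe that the transition-probability computation displayed immediately before the statement already does essentially all the work: it shows
\[
\bP_X(X_{k+1} = B \mid X_k = A) = \frac{|\cC_B|}{n},
\]
a quantity that does not depend on $A$. First I would conclude from this, together with the Markov property of $(X_i)_{i \geq 1}$ on $G'$, that conditionally on $\mathbb{G}(n,1/n)$ (equivalently, on the contracted graph $G'$) the variables $(X_i)_{i \geq 2}$ are independent, with $X_i$ equal to a vertex $B \in V(G')$ --- equivalently to the component $\cC_B$ --- with probability $|\cC_B|/n$. Then I would check that $X_1$ has the same law: by construction it is obtained by sampling a vertex of $G$ uniformly at random and passing to its (contracted) component, so $\bP_X(X_1 = B) = |\cC_B|/n$ as well. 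Hence $(X_i)_{i \geq 1}$ is, conditionally on $\mathbb{G}(n,1/n)$, a sequence of i.i.d.\ size-biased component picks.

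Next I would compare this with the defining formula \eqref{eq:choose_S} for $(S_i)_{i \geq 1}$, which says precisely that, conditionally on $\mathbb{G}(n,1/n)$, the $S_i$ are i.i.d.\ with $\bP^n_S(S_i = \cC_k) = |\cC_k|/n$. Since the vertices of $G'$ are in bijection with the components $\cC_1, \dots, \cC_k$ of $\mathbb{G}(n,1/n)$, the two conditional laws agree on all finite-dimensional marginals, hence as probability measures on sequences of components. Finally, two processes with the same law can be realized on a common probability space so as to coincide everywhere --- concretely, one simply sets $S_i := X_i$ --- and assembling this construction fibrewise over the randomness of the graph (and of the subtrees $T_1, \dots, T_k$ entering the definition of $G'$) yields the desired coupling with $X_i = S_i$ for all $i \geq 1$ almost surely.

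I do not anticipate a genuine obstacle: the lemma is an immediate consequence of the transition-probability computation above, which is why it ``follows immediately''. The only points deserving a sentence of care are that $\bP^n_S$ and $\bP_X$ are themselves random measures (measurable functions of $\mathbb{G}(n,1/n)$, with $\bP_X$ depending in addition on the realization of the subtrees used to form $G'$), so the equality $X_i = S_i$ is to be read conditionally on that data and the coupling is built fibrewise; and that the replacement of all self-loops at a vertex $A$ by a single self-loop of weight $|\cC_A|^2$ is harmless, since self-loops never appear in the UST and the transition probabilities used above already incorporate that modification.
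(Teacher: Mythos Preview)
Your proposal is correct and matches the paper's approach exactly: the lemma is stated to ``follow immediately'' from the transition-probability computation displayed just before it, and you have spelled out precisely that immediate deduction. The only content is the observation that the transition probabilities are independent of the current state and agree with the size-biased law \eqref{eq:choose_S}, which you have identified.
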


As it will be clear later, all of our proofs hold uniformly over all possible realizations of the subtrees $T_i$ in the components $\cC_i$ (in fact, most components are already trees themselves). We will therefore drop the notation regarding the $T_i$'s and the weights $\weight'$, and write $\bP_{\cT'}$ for the law of the tree on $G'$ conditional on a realization of the subtrees $T_1, \ldots, T_k$.

\subsection{Proof of lower bound in Theorem~\ref{T:diam_extreme}} \label{SS:lower_idea}

The proof idea behind the lower bound of Theorem~\ref{T:diam_extreme} is the following. Using \eqref{eq:Spatial_Markov_Tk}, we first contract the components into single vertices, and then we run a random walk $X'$ on the resulting graph $G'$. The diameter of the tree $\cT'$ on $G'$ is at least as large as the first time the random walk $X'$ revisits a vertex in $G'$, which corresponds to revisiting a component in $G$. The results of Section~\ref{S:size_biased_GNP} will imply that the random walk visits approximately $n^{1/3}$ many different components before creating a loop. Each visited component $\cC_i$ is almost\footnote{If $\cC_i$ is cycle-free, then it is exactly distributed as a UST.} distributed as a UST on $|\cC_i|$ many vertices, and hence it will contribute approximately $\sqrt{|\cC_i|}$ to the diameter to the tree. The result will follow by showing that such a sum consisting of $n^{1/3}$ many terms concentrates well enough around its mean, provided that we truncate each term at a certain level which polynomial in $n$ (we choose $n^{1/20}$ but any exponent small enough would suffice). We start by comparing the tree to the components selected in Section~\ref{S:size_biased_GNP}.

\begin{lemma} \label{L:compare_T_S}
    Let $\cT$ be a random spanning tree with law as in \eqref{eq:PomegaT}, and denote by $S_1, S_2, \ldots$ components chosen at random as in Section~\ref{S:size_biased_GNP}. If $t_1$ denotes the first repeat time of the process $(S_i)_{i\geq 1}$, then with $\bP_\cT$-probability at least $1 - n^{-1}$
     \begin{equation} \label{eq:construction_lower}
        \diam(\cT) \geq \sum_{i=1}^{t_1 - 1} d_{S_i}(u_i, v_i),
    \end{equation}
    where $u_i, v_i$ are two uniform random points in $S_i$ and $d_{S_i}$ is the graph distance on $S_i$. 
\end{lemma}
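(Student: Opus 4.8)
The plan is to combine the coupling between the random walk on the contracted graph and the size-biased component selection (Lemma~\ref{L:couple_X_S}), the spatial Markov property \eqref{eq:Spatial_Markov_Tk}, and the fact (from Lemma~\ref{L:UST_path_inside}) that for $\gamma \geq 5$ the UST restricted to a component is, with high probability, a genuine spanning tree of that component whose internal distances are realized inside the component. First I would invoke Lemma~\ref{L:UST_path_inside}: on an event of $\bP_\cT$-probability at least $1-n^{4-\gamma} \geq 1 - n^{-1}$, no edge of weight $1$ lies on a path between two vertices of the same component, so $\cT$ decomposes as subtrees $T_1,\dots,T_k$ spanning $\cC_1,\dots,\cC_k$ joined by unit-weight edges, and for any $i$ and any $x,y \in \cC_i$ we have $d_\cT(x,y) = d_{T_i}(x,y)$. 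Work on this event and condition on the realization of the $T_i$'s.

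Next I would run the random walk $(X_j)_{j\geq 1}$ on the contracted graph $G'$, coupled via Lemma~\ref{L:couple_X_S} so that the sequence of visited vertices of $G'$ is exactly the size-biased sequence $(S_j)_{j\geq 1}$ of components, and $t_1$ is the first time a component (equivalently, a vertex of $G'$) is revisited. For $j < t_1$ the components $S_1,\dots,S_{t_1-1}$ are pairwise distinct. The Aldous–Broder algorithm run on $G'$ (with the conditional law \eqref{eq:Spatial_Markov_Tk}) produces $\cT'$; concatenating $\cT'$ with the fixed subtrees $T_i$ gives $\cT$. The key geometric observation is that the walk enters each distinct component $S_j$ at some vertex $u_j$ (the endpoint in $S_j$ of the edge by which it first arrives) and leaves it at some vertex $v_j$ (the endpoint in $S_j$ of the edge by which it departs toward the next component); the Aldous–Broder edges internal to $S_j$ together with $T_j$ force a path in $\cT$ from $u_j$ to $v_j$ of length at least $d_{T_j}(u_j,v_j) = d_{S_j}(u_j,v_j)$. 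Since $S_1,\dots,S_{t_1-1}$ are distinct, the portion of $\cT$ spanned by these components, linked consecutively by the unit-weight edges the walk traverses, contains a single self-avoiding path from $u_1$ through $v_{t_1-1}$ whose length is at least $\sum_{i=1}^{t_1-1} d_{S_i}(u_i,v_i)$; hence $\diam(\cT)$ is at least this sum. Finally I would argue that, conditionally on which components are selected and the entry/exit structure, $u_j$ and $v_j$ are (at least stochastically) uniform and independent points of $S_j$: the entry vertex is uniform because an edge from $\cC_A$ to $\cC_B$ is picked with probability proportional to $|\cC_A||\cC_B|$ and, given the target component, each of its $|\cC_B|$ vertices is equally likely; the exit vertex, by the same token applied to the next step of the walk, is an independent uniform vertex of $S_j$ (using that self-loops have been collapsed so the walk does not stay put). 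Thus the right-hand side of \eqref{eq:construction_lower} is stochastically dominated by the actual contribution, giving the claimed inequality with $\bP_\cT$-probability at least $1-n^{-1}$.

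The main obstacle I expect is making the ``enter at $u_j$, leave at $v_j$, and these are uniform and independent'' claim fully rigorous: one must check that the Aldous–Broder tree edges inside $S_j$ really do create a $u_j$--$v_j$ path of length $d_{S_j}(u_j,v_j)$ (this uses that $\cT \cap \cC_j = T_j$ spans $\cC_j$), that distinct components give vertex-disjoint stretches of this path so the lengths add, and that the joint law of $(u_j,v_j)_{j<t_1}$ is exactly (or stochastically above) the product-of-uniforms law used in the statement — in particular that the exit vertex is conditionally independent of the entry vertex given the component sequence. A clean way to handle the independence is to note that, in the coupling, the entry vertex of $S_{j}$ and the entry vertex of $S_{j+1}$ are determined by the single transition edge the walk uses to move from $S_j$ to $S_{j+1}$, whose endpoints are a uniform vertex of $S_j$ and an independent uniform vertex of $S_{j+1}$ by \eqref{eq:weight_contracted}; so $v_j$ (the $S_j$-endpoint of that edge) is uniform on $S_j$ and independent of $u_j$ (the $S_j$-endpoint of the \emph{previous} transition edge). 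A minor point to dispatch is the boundary case $t_1 = 1$ (empty sum, trivially true) and the case where some $S_j$ is not cycle-free, where $T_j$ is still a spanning tree of $\cC_j$ so $d_{T_j} \geq d_{\cC_j}$ and the argument is unaffected.
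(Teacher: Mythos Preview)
Your proposal is correct and follows essentially the same route as the paper: invoke Lemma~\ref{L:UST_path_inside} for the $1-n^{-1}$ bound, contract components via the spatial Markov property, run Aldous--Broder on $G'$ coupled to $(S_i)_{i\ge 1}$ by Lemma~\ref{L:couple_X_S}, observe that the parallel-edge structure makes the entry and exit vertices in each visited component independent uniform points, and concatenate the resulting $u_i$--$v_i$ paths through the distinct components $S_1,\dots,S_{t_1-1}$. The only slip is the equality $d_{T_j}(u_j,v_j) = d_{S_j}(u_j,v_j)$, which should be $\geq$ (as you yourself note later); the paper uses exactly this inequality to pass from tree distance to graph distance in the final display.
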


\begin{proof}
    We consider the law $\bP_{\cT'}$ from \eqref{eq:Spatial_Markov_Tk} on $G'$, which consists of contracting the connected components into single vertices, instead of $\bP_{\cT}$. Any vertex $A \in V(G')$ corresponds to a unique component $\cC_A$ of the critical \ErdosRenyi random graph.  We will show that, after uncontracting $\cT'$ to obtain $\cT$, \eqref{eq:construction_lower} holds almost surely, so that by the argument in Section~\ref{SS:contract}, the event fails with probability at most $n^{-1}$.

    To obtain $\cT'$ on $G'$ using the Aldous-Broder algorithm, we run a random walk $(X_i)_{i \geq 1}$ on $G'$ started by choosing a vertex $u_1$ uniformly at random and taking $X_1$ to be the component $\cC(u_1)$ corresponding to $u_1$. By Lemma~\ref{L:couple_X_S} we may couple this process with the components $(S_i)_{i \geq 1}$ from Section~\ref{S:size_biased_GNP} such that $X_i = S_i$. Note that between distinct vertices $A,B \in V(G')$, there are $|\cC_A| |\cC_B|$ parallel edges (all with weight equal to one) which the walk may use. As each edge is equally likely to be traversed by the random walk from $X_i = A$ to $X_{i+1} = B$, and every pair $v \in \cC_A$, $u \in \cC_B$ corresponds to exactly one edge, the random walk exits $\cC_A$ from a uniformly randomly chosen point $v_i \in \cC_A$ and enters $\cC_B$ at a uniformly randomly chosen point $u_{i+1} \in \cC_B$.
    
    Now recall that $t_1 \geq 2$ was the first time a component in the sequence $(S_i)_{i \geq 1}$ was repeated. Note that under this coupling, until time $t_1 - 1$, no cycles have appeared for the random walk. By the Aldous-Broder algorithm, this means that the edges $(X_i, X_{i+1})$, $1 \leq i \leq t_1 - 2$ are in the spanning tree $\cT'$. Each edge $(X_i, X_{i+1})$ connects two components $S_i$ and $S_{i+1}$ (with corresponding trees $T(S_i)$ obtained from the contraction procedure as in \eqref{eq:Spatial_Markov_Tk}) via random vertices $v_i$ and $u_{i+1}$. Furthermore, every pair of vertices $u_i$ and $v_i$ is connected in $\cT$ by some (possibly empty) path in $T(S_i)$. Since the distances in the contracted trees $T(S_i)$ cannot be smaller than the distances in the components they correspond to, we obtain in the uncontracted graph that
    \begin{equation*}
        \diam(\cT) \geq t_1 - 2 + \sum_{i=1}^{t_1 - 1} d_{T(S_i)}(u_i, v_i) \geq \sum_{i=1}^{t_1 - 1} d_{S_i}(u_i, v_i),
    \end{equation*}
    completing the proof. 
\end{proof}   

\begin{remark}   
   Notice that the bound in \eqref{eq:construction_lower} is independent of the specific subtrees $T_1, \ldots, T_k$ chosen in the contraction argument to obtain \eqref{eq:Spatial_Markov_Tk}.
\end{remark}

In order to obtain a better concentration for the sum in \eqref{eq:construction_lower}, we further lower bound the diameter by taking the minimum between $n^{1/20}$ and each term in the sum; that is, 
    \begin{equation} \label{eq:construction_lower_wedge}
        \diam(\cT) \geq \sum_{i=1}^{t_1 - 1} \Big( d_{S_i}(u_i, v_i) \wedge n^{1/20} \Big).
    \end{equation}
(We remark that any exponent strictly smaller than $1/15$ would suffice.) We will show that the right-hand side of \eqref{eq:construction_lower_wedge} concentrates well enough.

    \begin{proof}[Proof of the lower bound in Theorem~\ref{T:diam_extreme}]

    \smallskip
    
    We define the following (good) events for some $B,r >0$ to be determined later:
    \begin{align}
        \mathcal{G}_1 &\coloneqq\{ t_1 \geq r n^{1/3} + 1 \}, \nonumber  \\
        \mathcal{G}_2 &\coloneqq\Big\{ \bE^n_S \Big[\sum_{i=1}^{r n^{1/3}} \big( d_{S_i}(u_i, v_i) \wedge n^{1/20} \big) \Big]   \geq \frac{1}{B} r n^{1/3} \log n \Big\} \nonumber \\
        &= \Big\{ \bE^n_S \big[  d_{S_1}(u_i, v_i) \wedge n^{1/20} \big] \geq \frac{1}{B} \log n \Big\}.  \label{eq:lower_good_2}
    \end{align}
    In Lemma~\ref{L:repeat_time}, we showed that, if $r$ is chosen small enough, then for all large enough $n$ we have
    \begin{equation*}
        \E \big[ \bP^n_S(\mathcal{G}_1^c) \big] \leq \frac{\varepsilon}{3}.
    \end{equation*}
    In order to simplify the forthcoming notation, in what follow we will write $\bP_S$ instead of $\bP^n_S$. In particular, Lemma~\ref{L:compare_T_S} and the inequality in \eqref{eq:construction_lower_wedge} together yield that
     \begin{multline}
        \E\Big[ \bP_\cT \big(\diam(\cT) \geq \frac{1}{A} n^{1/3} \log n \big) \Big] \\ 
        \geq \E\Big[ \bP_S \big( \sum_{i=1}^{r n^{1/3}} \big( d_{S_i}(u_i, v_i) \wedge n^{1/20} \big) \geq \frac{1}{A} n^{1/3} \log n \big) \Big] - \frac{\varepsilon}{3} - \frac{1}{n}. \label{eq:good_event_1}
    \end{multline}
    
    We claim that, on $\mathcal{G}_2$, the event in \eqref{eq:good_event_1} occurs with high enough probability. With this in mind, applying Paley–Zygmund's inequality (w.r.t.\ to $\bP_S$) to the non-negative random variable
    \begin{equation*} 
        M := \sum_{i=1}^{r n^{1/3}} \big( d_{S_i}(u_i, v_i) \wedge n^{1/20} \big),
    \end{equation*}
    we obtain for $\theta \in [0,1]$
    \begin{equation} \label{eq:paley-zyg}
        \bP_S \big( M > \theta \bE_S[M] \big) \geq (1- \theta)^2 \frac{\bE_S[M]^2}{\bE_S[M^2]}.
    \end{equation}
    The second moment of $M$ can be bounded by
    \begin{align}
        \bE_S[M^2] &= \sum_{i,j=1}^{r n^{1/3}} \bE_S \big[ (d_{S_i}(u_i, v_i) \wedge n^{1/20}) (d_{S_j}(u_j, v_j) \wedge n^{1/20}) \big] \nonumber \\
        &\leq \sum_{i,j=1, i \neq j}^{r n^{1/3}} \bE_S \big[ d_{S_i}(u_i, v_i) \wedge n^{1/20} \big] \bE_S \big[ d_{S_j}(u_j, v_j) \wedge n^{1/20} \big] + \sum_{i=1}^{r n^{1/3}} \bE_S[ n^{1/10}] \nonumber \\
        &\leq \bE_S[ M]^2 + r n^{1/3 + 1/10}. \label{eq:dia_lower_2nd_mom}
    \end{align}

    Hence, we have
    \begin{align*}
        \mathbbm{1}_{\mathcal{G}_2} \, \bE_S[M]  &\geq \mathbbm{1}_{\mathcal{G}_2}\, \frac{1}{B} n^{1/3} \log n,\\
        %\mathbbm{1}_{\mathcal{G}_2} \, \bE_S[ M^2] &= \mathbbm{1}_{\mathcal{G}_2}\, (1+o(1)) \bE_S[ M]^2, \\
        \mathbbm{1}_{\mathcal{G}_2}  \cdot 1/\bE_S[ M^2] &= \mathbbm{1}_{\mathcal{G}_2} \cdot (1+o(1)) /\bE_S[ M]^2,
    \end{align*}
    where the second equality follows from \eqref{eq:dia_lower_2nd_mom}. Therefore, 
    \begin{equation*}
        \mathbbm{1}_{\mathcal{G}_2}\, \bP_S \big( M \geq \theta \frac{1}{B} n^{1/3} \log n \big) \geq \mathbbm{1}_{\mathcal{G}_2}\, (1+o(1)) (1-\theta)^2.
    \end{equation*}
    By choosing $\theta = \varepsilon/5$ and $A = B/\theta$, the inequality of \eqref{eq:good_event_1} gives for $n$ large enough that
    \begin{equation*}
        \E\Big[ \bP_\cT \big(\diam(\cT) \geq \frac{1}{A} n^{1/3} \log n \big) \Big] \geq 1 - \frac{2 \varepsilon}{3}  - \P(\mathcal{G}_2^c).
    \end{equation*}
    To complete the proof, it suffices to show that $\mathcal{G}_2^c$ occurs with probability at most $\varepsilon/3$, provided that $B = B(\varepsilon, r)$ is large enough. 
    
    To this end, we will again provide first and second moment estimates (now with respect to $\P$ instead of $\bP_S$ as before) on $\bE_S[ d_{S_i}(u_i, v_i) \wedge n^{1/20}]$. Recall that $\cC(x)$ is the component of a vertex $x$, and that the sampling procedure of the components $S_i$ is equivalent to first uniformly at random selecting a vertex $x$ and taking its component $\cC(x)$. Let $u_x, v_x$ be two vertices chosen uniformly from $\cC(x)$. By exchangeability of the vertices (see also \eqref{eq:E_f(S)})
    \begin{align*}
        \E\Big[ \bE_S\big[d_{S_i}(u_i,v_i) \wedge n^{1/20} \big] \Big] &= \E\Big[ \sum_{x \in V} \frac{1}{n} d_{\cC(x)}(u_x,v_x) \wedge n^{1/20} \Big] \\
        &= \E \big[ d_{\cC(1)}(u_1, v_1) \wedge n^{1/20} \big].
    \end{align*}
    Conditional on $\cC(1)$ containing no cycles, the component $\cC(1)$ is distributed as a uniform spanning tree on a complete graph with $|\cC(1)|$ many vertices. Indeed, the probability of any configuration of $\cC(1)$ without cycles occurring is proportional to $p^{|\cC(1)|}$.     
    Therefore, applying Lemmas~\ref{L:dist_u_v_UST_Km} and~\ref{L:collection_RG_facts} we obtain
    \begin{align}
        &\E[d_{\cC(1)}(u_1,v_1) \wedge n^{1/20}]  \nonumber \\ %\label{eq:expected_d_ui_vi}
        &\geq \sum_{j=1}^{n^{1/20}}  \E \big[ d_{\cC(1)}(u_1,v_1) \bigm|  \cC(1) \text{ cycle--free}, |\cC(1)| = j \big] \hspace{2pt} \P \big(\cC(1) \text{ cycle--free}, \cC(1) = j \big) \nonumber \\
        &\geq c \sum_{j=2}^{n^{1/20}} \sqrt{j} \hspace{2pt} \P\big( \cC(1) \text{ cycle--free}, |\cC(1)| = j \big) \nonumber \\
        &= c \sum_{j=2}^{n^{1/20}} \sqrt{j} \hspace{2pt} \P \big(\cC(1) \text{ cycle--free} \bigm| |\cC(1)| = j \big) \hspace{2pt} \P \big(|\cC(1)| = j \big) \nonumber \\
        \nonumber&\geq  c' \sum_{j=2}^{n^{1/20}} \sqrt{j} \hspace{2pt} \frac{1}{j^{3/2}}\\
        \nonumber&\geq c''n^{1/40}\\
        &\geq c'' \log n, \label{eq:distance_1st_moment_lower}
    \end{align}
    where $c, c', c'' > 0$ are some constants independent of $n$.
    
    On the other hand, in order to bound the second moment from above, we first split the sum into two parts depending on whether the components of $x$ and $y$ are distinct or not, to obtain 
    \begin{align}
        & \vspace{-1cm }\E\Big[ \bE_S \big[  d_{S_i}(u_i,v_i) \wedge n^{1/20} \big]^2 \Big] \nonumber \\
        &= \E \Big[\frac{1}{n^2} \sum_{x,y \in V} (d_{\cC(x)}(u_x,v_x) \wedge n^{1/20}) (d_{\cC(y)}(u_y,v_y) \wedge n^{1/20}) \Big] \nonumber \\
        &\leq  \frac{1}{n^2} \Big( \sum_{x,y \in V} n^{1/10} \P \big(\cC(x) = \cC(y) \big)  \nonumber \\
        &\quad \qquad + \E\big[ (d_{\cC(x)}(u_x,v_x) \wedge n^{1/20}) (d_{\cC(y)}(u_y,v_y) \wedge n^{1/20} ) \mathbbm{1}_{\cC(x) \neq \cC(y)} \big] \Big). \label{eq:diam_2nd_moment}
    \end{align}
    By Lemma~\ref{L:same_component} we know that $\P( \cC(x) = \cC(y)) \leq c n^{-1/3}$ whenever $x \neq y$, so that the first term in \eqref{eq:diam_2nd_moment} can be further bounded from above by
    \begin{equation*}
    \frac{n^{1+1/10} + c \cdot n^{2 +1/10 - 1/3}}{n^{2}} = O(1).
    \end{equation*}
    Concerning the second term in \eqref{eq:diam_2nd_moment}, we control it by conditioning on the realization of the component $\cC(y)$ and its size. Namely, for $x,y \in V$ let $\mathcal{A}^x_y$ be the collection of sets $A \subset [n] \setminus \{x\}$ with $y \in A$. Then, conditioning on the realization of the component $\cC(y)$, we have
    \begin{align}
        &\E\big[ (d_{\cC(x)}(u_x,v_x) \wedge n^{1/20}) (d_{\cC(y)}(u_y,v_y) \wedge n^{1/20}) \mathbbm{1}_{\cC(x) \neq \cC(y)} \big] \nonumber \\
        & \leq \sum_{A \in \mathcal{A}^x_y, |A| \leq n^{1/4}} \Big( \E\big[ (d_{\cC(x)}(u_x,v_x) \wedge n^{1/20}) (d_{\cC(y)}(u_y,v_y) \wedge n^{1/20}) \mid \cC(y) = A \big] \nonumber \\
        & \hspace{5cm} \cdot \P\big( \cC(y) = A  \big) \Big) + n^{1/10} \P\big( |\cC(y)| \geq n^{1/4} \big). \label{eq:product_distance}
    \end{align}
    Conditional on the event $\{ \cC(y) = A\}$, the distances in $\cC(y)$ depend only on edges with both endpoints in $A$, and the distances in $\cC(x)$ depend only on edges with both endpoints in $A^c$, so that the two terms in the (conditional) expectation of \eqref{eq:product_distance} are conditionally independent. Furthermore, given that $ \cC(y) = A$, the distances of $\cC(x)$ in $\mathbb{G}(n,1/n)$ are distributed as the distances of $\cC(x)$ in $\mathbb{G}(n - |A|, 1/n)$ with corresponding law $\P_{n-|A|,1/n}$ (where by abuse of notation we may need to relabel the vertices such that all have labels inside $[n - |A|]$). Using these two facts and Lemma~\ref{L:RG_typical} for $\P(\cC(y) \geq n^{1/4})$, the right hand side of \eqref{eq:product_distance} is upper bounded by
    \begin{align}
        &\sum_{A \in \mathcal{A}^x_y, |A| \leq n^{1/4}} \Bigg( \E \big[ d_{\cC(y)}(u_y,v_y) \wedge n^{1/20} \mid \cC(y) = A \big] \P\big( \cC(y) = A  \big) \nonumber \\
        & \qquad \qquad \qquad \qquad \qquad \cdot \E_{n-|A|,\frac{1}{n}} \big[ d_{\cC(x)}(u_x,v_x) \wedge n^{1/20} \big] \Bigg) +  n^{1/10} \frac{C}{(n^{1/4})^{1/2}} \nonumber \\
        &\leq  \max_{1 \leq b \leq n^{1/4}} \E_{n-b,\frac{1}{n}} \big[ d_{\cC(x)}(u_x,v_x) \wedge n^{1/20} \big]  \E \big[ d_{\cC(y)}(u_y,v_y) \wedge n^{1/20} \big] + o(1). \label{eq:2nd_moment_max}
    \end{align}
    % \begin{align}
    %     &\max_{1 \leq b \leq n^{1/5}} \E_{n-b,\frac{1}{n}} \big[ d_{\cC(x)}(u_x,v_x) \wedge n^{1/20} \big] \, \cdot \nonumber \\
    %     & \qquad \qquad \sum_{A \in \mathcal{A}^x_y, |A| \leq n^{1/5}} \E \big[ d_{\cC(y)}(u_y,v_y) \wedge n^{1/20} \mid \cC(y) = A \big] \P\big( \cC(y) = A  \big) \nonumber \\
    %     & \qquad \qquad  \qquad \qquad +  n^{1/10} \frac{C}{(n^{1/5})^{1/2}} \nonumber \\
    %     &\leq  \max_{1 \leq b \leq n^{1/5}} \E_{n-b,\frac{1}{n}} \big[ d_{\cC(x)}(u_x,v_x) \wedge n^{1/20} \big]  \E \big[ d_{\cC(y)}(u_y,v_y) \wedge n^{1/20} \big] + C, \label{eq:2nd_moment_max}
    % \end{align}
    % where we recall that $\E_{m, p}$ denotes the expectation with respect to an \ErdosRenyi random graph on $m$ vertices and retention probability $p$. 
    Now applying Lemmas~\ref{L:TV_fact} and~\ref{L:d_TV_remove} we get
    \begin{align*}
        & \max_{1 \leq b \leq n^{1/4}} \E_{n-b,\frac{1}{n}} \big[ d_{\cC(x)}(u_x,v_x) \wedge n^{1/20} \big] \\
        &\qquad \qquad \leq \E_{n,\frac{1}{n}} \big[ d_{\cC(x)}(u_x,v_x) \wedge n^{1/20} \big] + C n^{1/20 + 1/4 - 1/3} \\
        & \qquad \qquad = (1+o(1)) \E_{n,\frac{1}{n}} \big[ d_{\cC(x)}(u_x,v_x) \wedge n^{1/20} \big]
    \end{align*}
    Therefore, by \eqref{eq:diam_2nd_moment}, \eqref{eq:product_distance} and \eqref{eq:2nd_moment_max} we obtain 
    \begin{equation*}
        \E\Big[ \bE_S \big[  d_{S_i}(u_i,v_i) \wedge n^{1/20} \big]^2 \Big] = (1+o(1)) \E\Big[ \bE_S \big[  d_{S_i}(u_i,v_i) \wedge n^{1/20} \big] \Big]^2.
    \end{equation*}
    The proof may now be completed by using the first moment lower bound in \eqref{eq:distance_1st_moment_lower} together with Paley–Zygmund's inequality, to give that $\P(\mathcal{G}_2) \geq 1 - \varepsilon/3$ whenever $B$ (as in \eqref{eq:lower_good_2}) is large enough. 
\end{proof}

%%%%%%%%%%%%%%%%%%%%%% NEW SUB SECTION %%%%%%%%%%%%%%%%%%%%%%%%%
%%%%%%%%%%%%%%%%%%%%%% Upper bound %%%%%%%%%%%%%%%%%%%%%%%%%

\section{Upper bound} \label{S:upper_bound}
We now proceed to the proof of the upper bound in Theorem~\ref{T:diam_extreme}. In the first subsection, we collect some preliminary facts that will be needed in the proof of the upper bound.

\subsection{Preliminary bounds} \label{SS:Upper_pre}

 Denote by $\Exc(H) \coloneqq |E(H)| - |V(H)|$ the excess of a graph $H$. Note that a tree has excess equal to $-1$. We shall make use of the following lemma.
\begin{lemma} \label{L:excess_diam}
    Let $T$ be a spanning tree of a connected graph $H$ with $\Exc(H) = k$ for $k \geq -1$. Then
    \begin{equation}
        \diam(T) \leq 2 (k+2) \diam(H) + k+1.
    \end{equation}
\end{lemma}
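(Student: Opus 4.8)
The plan is to induct on the excess $k$, peeling off one cycle at a time. For the base case $k = -1$, the graph $H$ is itself a tree, so the only spanning tree $T$ equals $H$ and we trivially have $\diam(T) = \diam(H) \leq 2 \cdot 1 \cdot \diam(H) + 0$. For the inductive step, suppose $\Exc(H) = k \geq 0$. Then $H$ contains at least one cycle, so we may pick an edge $e = \{x,y\}$ lying on a cycle of $H$; the graph $H' \coloneqq H - e$ is still connected and has $\Exc(H') = k-1$. Given the spanning tree $T$ of $H$, I want to produce a spanning tree $T'$ of $H'$ whose diameter controls that of $T$.

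\textbf{Key steps.} If $e \notin T$, then $T$ is already a spanning tree of $H'$ and we apply the inductive hypothesis directly, which even gives the stronger bound $\diam(T) \leq 2(k+1)\diam(H') + k \leq 2(k+2)\diam(H) + (k+1)$. The interesting case is $e \in T$. Removing $e$ from $T$ splits it into two subtrees $T_x \ni x$ and $T_y \ni y$; since $H'$ is connected there is an edge $f = \{a, b\}$ of $H'$ with $a \in T_x$, $b \in T_y$ (for instance, take $f$ to be an edge along a path in $H'$ from $x$ to $y$ that crosses the cut), and $T' \coloneqq (T - e) + f$ is a spanning tree of $H'$. Any path in $T$ decomposes into a segment in $T_x$, the edge $e$, and a segment in $T_y$; in $T'$ we can route between the two sides using the path through $f$, whose length inside $T'$ is at most $\diam(T_x) + 1 + \diam(T_y) + (\text{detour to }a\text{ and }b)$. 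Carefully bounding: for any $u, v \in V(H)$,
\begin{equation*}
    d_T(u,v) \leq \diam(T_x) + \diam(T_y) + 1 \leq \diam(T') + \bigl(\text{correction of order }1\bigr),
\end{equation*}
so that $\diam(T) \leq \diam(T') + c$ for an explicit small constant $c$. Then apply the inductive hypothesis to $T'$ as a spanning tree of $H'$ with $\Exc(H') = k-1$, and check that $2(k+1)\diam(H') + k + c \leq 2(k+2)\diam(H) + k + 1$, using $\diam(H') \leq \diam(H) + \diam(H)$ or a cruder bound like $\diam(H') \leq (\text{something}) \cdot \diam(H)$ coming from the fact that removing one cycle-edge at most doubles distances (replace $e$ by the rest of its cycle, which has length $\leq \diam(H)+1$ in $H$... actually $\leq$ a bound in terms of $\diam(H)$).

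\textbf{Main obstacle.} The delicate point is bookkeeping the constants so that the factor $2(k+2)$ comes out exactly right rather than, say, $3^k$ or $2^k$. The cleanest route is probably to avoid tracking $\diam(H')$ through the induction and instead argue directly in $H$: a shortest $u$–$v$ path in $T$ uses at most $k+1$ of the ``extra'' edges, and each time it would use such an edge we reroute along a path in $H$ of length $\leq \diam(H)$, while the remaining genuine-tree portions contribute at most... — formalizing this ``reroute each excess edge'' count to land on the coefficient $2(k+2)$ is the part that needs care, and is where I expect to spend the most effort. I would first work out the $k = 0$ (unicyclic) case by hand to pin down the right constant, then generalize.
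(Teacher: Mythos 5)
Your primary inductive approach has a genuine gap at the step you flagged as delicate, and it does not resolve the way you hope. When $e \in T$ and you form $T' = (T-e)+f$, the correct bound is \emph{not} $\diam(T) \leq \diam(T') + c$ for a small additive constant $c$. Removing $e$ splits $T$ into $T_x, T_y$, and a diameter-achieving path in $T$ between $u\in T_x$, $v\in T_y$ has length $d_{T_x}(u,x)+1+d_{T_y}(y,v)$, whereas the corresponding path in $T'$ goes through $a$ and $b$; the only general bound one gets is
\[
\diam(T)\ \leq\ \diam(T_x)+1+\diam(T_y)\ \leq\ 2\,\diam(T')+1,
\]
a multiplicative factor of $2$ per edge swap. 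Iterating this over $k+1$ peeled cycle edges produces exactly the $2^k$-type blowup you worried about, and the induction does not deliver the linear-in-$k$ coefficient $2(k+2)$. Your secondary idea (count the at most $k+1$ ``excess'' edge uses and reroute each one) is closer to the truth but you do not carry it out, and as stated it is ambiguous about which tree the reroutes live in.

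The paper's proof sidesteps the induction entirely. It takes $T_2$ to be a breadth-first-search spanning tree of $H$, so that $\diam(T_2) \leq 2\diam(H)$, and observes that since both $T$ and $T_2$ select $|V(H)|-1$ edges from a pool of $|V(H)|+k$ edges, they can differ in at most $k+1$ edges, i.e.\ $|E(T_2)\setminus E(T)| \leq k+1$. It then invokes a known comparison lemma (Lemma~4.10 of \cite{MSS23}) which states that for two spanning trees of the same graph with $m = |E(T_2)\setminus E(T_1)|$, one has $\diam(T_1) \leq (m+1)\diam(T_2) + m$. Plugging in $m \leq k+1$ and $\diam(T_2)\leq 2\diam(H)$ gives exactly the claimed bound. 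The structural point your induction misses is that you should compare two spanning trees of the \emph{same} graph differing in few edges, rather than chase the diameter of one tree through a sequence of shrinking ambient graphs; the latter loses a factor of $2$ at each step.
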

\begin{proof}
    Let $T_1 = T$, and let $T_2$ be a spanning tree with $\diam(T_2) \leq 2 \cdot \diam(H)$. Such a tree always exists, as we may take an arbitrary root $v_0$ and let $T_2$ be the breadth-first-search tree. We then have for any vertices $u,v \in V(H)$ that
    \begin{equation*}
        d_{T_2}(u,v) \leq d_{T_2}(u, v_0) + d_{T_2}(v_0, v) = d_{H}(u, v_0) + d_{H}(v_0, v) \leq 2 \cdot \diam(H)
    \end{equation*}
    as required. Note that the edge set $E(T_2) \setminus E(T_1)$ contains at most $k+1$ many edges, as both trees select $|V(H)|-1$ edges from a set of $|E(H)| = |V(H)|+k$ many edges. Lemma~4.10 of \cite{MSS23} then gives that
    \begin{equation*}
        \diam(T_1) \leq (k+2) \diam(T_2) + k+1,
    \end{equation*}
    from which the assertion follows.
\end{proof}

Next, we bound the diameter of a component $\cC(x)$ by comparing it to a branching process tree, which we shall denote by $T_{\BP}$. For the following lemma, recall that $\height(T_{\BP})$ denotes the maximum generation to which the branching process survived,  i.e., the largest $\ell$ such that there is at least one individual at generation $\ell$.

\begin{lemma} \label{L:diam_gen_BP}
    There exists a tree $T_{\BP}$ associated to a branching process with offspring law $\text{Bin}(n,1/n)$, such that 
    \begin{equation*}
        \diam \big( \cC(x) \big) \leq 2 \height(T_{\BP}).
    \end{equation*}
\end{lemma}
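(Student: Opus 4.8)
The plan is to realise the component $\cC(x)$ as the output of a breadth-first exploration started at $x$, and then to dominate that exploration, tree structure and all, by a branching process with $\text{Bin}(n,1/n)$ offspring. The point of using breadth-first search (rather than the generic exploration of \textbf{Algorithm 1}) is that a BFS tree records the graph distances from its root, which is exactly what converts a bound on the \emph{height} of the dominating tree into a bound on the \emph{diameter} of $\cC(x)$.

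\emph{Step 1: BFS and the triangle inequality.} First I would run a breadth-first search of $\cC(x)$ in $\mathbb{G}(n,1/n)$ started from $x$: process vertices in increasing order of their distance to $x$, and when a vertex $w$ at distance $\ell$ is processed, reveal the edges from $w$ to the currently unseen vertices, declaring each newly connected unseen vertex to be a child of $w$, placed at distance $\ell+1$. This produces a spanning tree $T_x$ of $\cC(x)$, rooted at $x$, with the defining BFS property $d_{T_x}(x,u) = d_{\cC(x)}(x,u)$ for all $u \in \cC(x)$; hence $\height(T_x) = \max_{u \in \cC(x)} d_{\cC(x)}(x,u)$ is the eccentricity of $x$ in $\cC(x)$. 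A triangle inequality through the root then gives $d_{\cC(x)}(u,v) \leq d_{\cC(x)}(u,x) + d_{\cC(x)}(x,v) \leq 2\,\height(T_x)$ for every pair $u,v$, i.e. $\diam(\cC(x)) \leq 2\,\height(T_x)$.

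\emph{Step 2: domination by a branching process.} Next I would couple $T_x$ with a branching process tree $T_{\BP}$ with offspring law $\text{Bin}(n,1/n)$ so that $T_x$ embeds into $T_{\BP}$ generation by generation. The key observation is that, conditionally on the exploration history, the number of children of a processed vertex $w$ is $\text{Bin}(|\mathcal{U}|,1/n)$ with $|\mathcal{U}| \leq n$ the number of unseen vertices, which is stochastically dominated by $\text{Bin}(n,1/n)$. Building both objects on one probability space inductively over generations — assigning each vertex of $T_{\BP}$ exactly $\text{Bin}(n,1/n)$ children and identifying the children of the corresponding vertex of $T_x$ with a random subset of them of the correct (smaller) size, the leftover children of $T_{\BP}$ and all their descendants being ``phantom'' vertices absent from $T_x$ — exhibits $T_x$ as a subtree of $T_{\BP}$ containing the root. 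In particular $\height(T_x) \leq \height(T_{\BP})$, and combining with Step 1 yields $\diam(\cC(x)) \leq 2\,\height(T_x) \leq 2\,\height(T_{\BP})$, as claimed.

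I do not expect a genuine obstacle: this is the classical exploration-versus-branching-process comparison, and the only points needing a little care are (i) explicitly recording that the BFS tree preserves distances from its root, which is what licenses the passage from a height bound to a diameter bound, and (ii) carrying out the coupling at the level of the tree structure, generation by generation, rather than merely as a stochastic-domination statement about $|\cC(x)|$, since here we need to control heights and not sizes. Both are routine.
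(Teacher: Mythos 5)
Your proposal is correct and follows essentially the same route as the paper: run a breadth-first exploration of $\cC(x)$, couple the resulting BFS tree with (a subtree of) a $\text{Bin}(n,1/n)$ branching process tree $T_{\BP}$ so that its height is dominated by $\height(T_{\BP})$, and then use a triangle inequality through the root to pass from a height bound to a diameter bound. The only (cosmetic) difference is the order in which the two ingredients are applied — the paper bounds $\diam(\cC(x)) \le \diam(T_{\BP}) \le 2\height(T_{\BP})$, whereas you bound $\diam(\cC(x)) \le 2\height(T_x) \le 2\height(T_{\BP})$ — and your version is arguably a touch cleaner because it makes explicit that the BFS tree preserves graph distances from $x$, which is what licenses the factor of $2$.
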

\begin{proof}
    Consider starting the exploration process described Section \ref{S:RG_estimates} at vertex $x$, and explore vertices in a breadth-first-search fashion. Each vertex $u$ discovered during the procedure has at most $\text{Bin}(n,1/n)$ neighbors. Thus we can couple $\mathcal{C}(x)$ with a branching process tree $T_{\BP}$, rooted at $x$ with offspring distribution $\text{Bin}(n,1/n)$, in such a way that 
    \begin{equation*}
        |\partial B(x,j)|\leq |\mathcal{L}_j| \text{ for each } j,
    \end{equation*}
    where $\partial B(x,j)$ is the set of vertices at distance $j$ from $x$, whereas $\mathcal{L}_j$ is the set of vertices at distance $j$ from the root (i.e., $x$) in $T_{\BP}$. If $\diam(\mathcal{C}(x))=k$, then there are two vertices $u,v$ in $\mathcal{C}(x)$ such that $d(u,v)=k$. By the coupling, these two vertices cannot be at distance smaller than $k$ in $T_{\BP}$, whence $\diam(T_{\BP}) \geq k = \diam(\mathcal{C}(x))$. The lemma follows by noting that $\diam(T_{\BP}) \leq 2\height(T_{\BP})$.
\end{proof}

Notice that the branching process in Lemma~\ref{L:diam_gen_BP} is \textit{critical} (i.e., it has mean equal to one) and the offspring distribution has variance $1-1/n$. In particular, there exists some universal constant $C$ (independent of $n$) such that for all $k \geq 1$
\begin{equation} \label{eq:height_BP}
    \P \big( \height(T_{\BP}) > k \big) \leq \frac{C_{\BP}}{k},
\end{equation}
see e.g.\ Theorem 12.7 in \cite{LP16}.
Recall from Section~\ref{S:size_biased_GNP} the law $\bP_S$ (with expectation $\bE_S$) that selects a component with probability proportional to its size.
\begin{corollary} \label{Cor:gen_moments}
    There exists a universal constant $C > 0$ such that, for $n \geq 2$ and any $B\geq 1$,
    \begin{align}
        \E \Big[ \bE_S \big[\diam(S_1) \wedge (Bn^{1/3}) \big] \Big] &\leq C \log(Bn), \label{eq:gen_first_moment}\\
        \E \Big[ \bE_S \big[ \big( \diam(S_1) \wedge (Bn^{1/3}) \big)^2 \big] \Big] &\leq C B n^{1/3} \label{eq:gen_second_moment}.
    \end{align}
\end{corollary}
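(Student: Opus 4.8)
The plan is to reduce both estimates to tail bounds for the height of the branching-process tree from Lemma~\ref{L:diam_gen_BP}. By \eqref{eq:E_f(S)} and exchangeability of the vertex labels, for any nonnegative graph functional $g$ one has $\E[\bE_S[g(S_1)]] = \E[g(\cC(1))]$; applying this to $g(\cdot) = \diam(\cdot)\wedge(Bn^{1/3})$ and to its square, it suffices to bound $\E[\diam(\cC(1))\wedge(Bn^{1/3})]$ and $\E[(\diam(\cC(1))\wedge(Bn^{1/3}))^2]$. By Lemma~\ref{L:diam_gen_BP} we may couple $\cC(1)$ with a critical branching-process tree $T_{\BP}$ of offspring law $\text{Bin}(n,1/n)$ so that $\diam(\cC(1))\leq 2\height(T_{\BP})$ pointwise. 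Writing $Y\coloneqq\height(T_{\BP})$ and $M\coloneqq\lceil Bn^{1/3}\rceil$, and using $(2Y)\wedge M\leq 2(Y\wedge M)$, this yields $\diam(\cC(1))\wedge(Bn^{1/3})\leq 2\,(Y\wedge M)$, so it is enough to show $\E[Y\wedge M] = O(\log M)$ and $\E[(Y\wedge M)^2] = O(M)$.

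For these, I would use the elementary tail-sum identities for a nonnegative integer random variable, namely $\E[Y\wedge M] = \sum_{k=0}^{M-1}\P(Y>k)$ and $\E[(Y\wedge M)^2] = \sum_{k=1}^{M}(2k-1)\,\P(Y\geq k)$, together with the bound $\P(Y>k)\leq C_{\BP}/k$ from \eqref{eq:height_BP} for $k\geq 1$ (and the trivial bound $1$ for $k=0$). The first sum is then at most $1 + C_{\BP}\sum_{k=1}^{M-1}1/k = O(\log M)$, while in the second we estimate $(2k-1)\P(Y\geq k) = (2k-1)\P(Y>k-1)\leq 2kC_{\BP}/(k-1)\leq 4C_{\BP}$ for $k\geq 2$, so the second sum is at most $1 + 4C_{\BP}(M-1) = O(M)$.

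Finally, I would substitute $M = \lceil Bn^{1/3}\rceil \leq Bn^{1/3}+1$: since $\log M \leq \log(Bn^{1/3}+1) \leq 2\log(Bn)$ and $\log(Bn)\geq\log 2>0$ for $B\geq 1$ and $n\geq 2$, the additive constants and the factor $2$ from the coupling get absorbed into a universal constant, giving \eqref{eq:gen_first_moment}; and $M\leq 2Bn^{1/3}$ gives \eqref{eq:gen_second_moment}. All constants are universal because $C_{\BP}$ in \eqref{eq:height_BP} and the coupling of Lemma~\ref{L:diam_gen_BP} do not depend on $n$, and the argument is uniform over $B\geq 1$. There is no serious obstacle here; the only mild care needed is the rounding of $Bn^{1/3}$ to an integer — harmless since it costs at most an additive $O(1)$ after truncation — and merging the $\log B$ and $\tfrac13\log n$ contributions into a single $\log(Bn)$.
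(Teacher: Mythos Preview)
Your proposal is correct and follows essentially the same route as the paper: both reduce via \eqref{eq:E_f(S)} and exchangeability to $\E[\diam(\cC(1))\wedge(Bn^{1/3})]$ and its square, dominate the diameter by $2\height(T_{\BP})$ via Lemma~\ref{L:diam_gen_BP}, and then apply the tail-sum identities together with the bound \eqref{eq:height_BP}. Your treatment of the rounding of $Bn^{1/3}$ and of the elementary inequality $(2Y)\wedge M\leq 2(Y\wedge M)$ is slightly more explicit than the paper's, but there is no substantive difference in approach.
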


\noindent We will make use of the following two (well-known) identities which express moments of non-negative integer-valued random variables as sums of tail probabilities.
\begin{lemma} \label{L:moments_as_tails}
    If $X \geq 0$ is a random variable supported on $\N_0$, then
    \begin{align*}
        \E[X] &= \sum_{k=0}^\infty \P( X > k), \\
        \E[X^2] &= \sum_{k=0}^\infty (2k + 1) \P(X > k),
    \end{align*}
\end{lemma}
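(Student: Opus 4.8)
The plan is to prove both identities by the standard ``layer-cake'' argument, i.e.\ by writing $X$ (respectively $X^2$) as a telescoping sum of indicators and exchanging the order of summation, which is justified by Tonelli's theorem since all summands are non-negative.

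For the first identity, I would start from $\E[X] = \sum_{j=0}^{\infty} j \, \P(X=j)$ and use the elementary identity $j = \sum_{k=0}^{j-1} 1 = \sum_{k=0}^{\infty} \mathbbm{1}_{\{k < j\}}$. Substituting and interchanging the two sums (all terms being non-negative) yields
\begin{equation*}
    \E[X] = \sum_{j=0}^{\infty} \sum_{k=0}^{\infty} \mathbbm{1}_{\{k<j\}} \, \P(X=j) = \sum_{k=0}^{\infty} \sum_{j > k} \P(X=j) = \sum_{k=0}^{\infty} \P(X > k).
\end{equation*}

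For the second identity I would use the arithmetic-series identity $\sum_{k=0}^{j-1} (2k+1) = 2\cdot\frac{(j-1)j}{2} + j = j^2$, valid for every $j \in \N_0$ (the empty sum giving $0$ when $j=0$). Thus $j^2 = \sum_{k=0}^{\infty} (2k+1)\mathbbm{1}_{\{k<j\}}$, and plugging this into $\E[X^2] = \sum_{j=0}^{\infty} j^2 \P(X=j)$ and again swapping the order of summation (all terms non-negative) gives
\begin{equation*}
    \E[X^2] = \sum_{k=0}^{\infty} (2k+1) \sum_{j > k} \P(X=j) = \sum_{k=0}^{\infty} (2k+1)\, \P(X > k),
\end{equation*}
as claimed. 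There is essentially no obstacle here; the only point requiring a word of care is the justification of the interchange of the (possibly infinite) sums, which is immediate from Tonelli's theorem for series with non-negative terms, and the verification of the identity $\sum_{k=0}^{j-1}(2k+1) = j^2$, which is a one-line computation. Both equalities hold in $[0,\infty]$ regardless of whether the moments are finite.
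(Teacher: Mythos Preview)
Your proof is correct and is essentially the same as the paper's: the paper writes $X = \sum_{k=0}^{X-1} 1$ and $X^2 = \sum_{k=0}^{X-1}(2k+1)$ as sums of indicators and then takes expectations, which is exactly your layer-cake argument phrased at the random-variable level rather than at the distribution level.
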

\begin{proof}
    The equalities follow by writing
    \begin{align*}
        X &= \sum_{k=0}^{X-1} 1 = \sum_{k=0}^{\infty} \mathbbm{1}_{\{X > k\}}, \\
        X^2 &= \sum_{k=0}^{X-1} (2k + 1) = \sum_{k=0}^{\infty} (2k+1) \mathbbm{1}_{\{X > k\}},
    \end{align*}
    and taking expectations on both sides.
\end{proof}

\begin{proof}[Proof of Corollary~\ref{Cor:gen_moments}]
    Denote by $T_{\BP}$ the branching process tree from Lemma~\ref{L:diam_gen_BP}. Then by Lemma~\ref{L:diam_gen_BP} and the height bound of \eqref{eq:height_BP}, we have by Lemma~\ref{L:moments_as_tails}
    \begin{align*}
        \E \big[ \bE_S[\diam(S_i) \wedge (Bn^{1/3})] \big] &= \frac{1}{n} \sum_{x \in V} \E\big[ \diam(\cC(x)) \wedge (Bn^{1/3}) \big] \\
        &\leq 2 \E \big[ \height(T_{\BP}) \wedge (Bn^{1/3}) \big] \\
        &= 2 \sum_{k=0}^\infty \P \big( \height(T_{\BP}) \wedge (Bn^{1/3}) > k \big) \\
        &\leq 2 + 2 C_{\BP}  \sum_{k=1}^{Bn^{1/3}} \frac{1}{k},
    \end{align*}
    from which \eqref{eq:gen_first_moment} follows. Furthermore,
     \begin{align*}
        \E \Big[ \bE_S \big[ \big( \diam(S_i) \wedge (Bn^{1/3}) \big)^2 \big] \Big] &=  \frac{1}{n} \sum_{x \in V} \E \big[ \big( \diam(\cC(x)) \wedge (Bn^{1/3}) \big)^2 \big]\\
        &\leq 4 \E \big[ \big( \height(T_{\BP}) \wedge (B n^{1/3}) \big)^2 \big] \\
        &= 4\sum_{k=0}^\infty (2k+1) \P\big( \height(T_{\BP}) \wedge (B n^{1/3}) > k\big) \\
        &= 4 + 4 C_{\BP} \sum_{k=1}^{B n^{1/3}} \frac{2k+1}{k},
    \end{align*}
    giving \eqref{eq:gen_second_moment} whenever $C$ is large enough.
\end{proof}

\subsection{Proof of upper bound in Theorem~\ref{T:diam_extreme}}

Recall from Section~\ref{SS:proof_ideas} the rough outline of the proof strategy for the upper bound in Theorem~\ref{T:diam_extreme}.  By the same reasoning used in the proof of the lower bound of Theorem~\ref{T:diam_extreme}, we will work with the UST measure on $G'$ %$\bP_{\cT'}$
obtained by contracting all the components into single vertices as in \eqref{eq:Spatial_Markov_Tk}. By Lemma~\ref{L:UST_path_inside}, this yields at most an additive error of $n^{-1}$ in any $\bP$-probability bound (we implicitly hide this in an $o(1)$ term later). Further, for a component $\cC$, let $T_{\textrm{d-max}}(\cC)$ be a spanning tree with maximal diameter among all spanning trees of $\cC$. This means that for any realization of the trees $T_k$ in \eqref{eq:Spatial_Markov_Tk}, we have
\begin{equation*}
    \max_{x,y \in \cC} d_\cT(x,y) \leq \diam( T_{\textrm{d-max}} (\cC) )
\end{equation*}
for all components $\cC$. We let $k\coloneqq  \lceil (\log n)^3 \rceil$ and define
\begin{equation} \label{eq:union_largest}
    \mathcal{L} = \bigcup_{i=1}^k \cC_i
\end{equation}
as the union of the $k$ largest components. Here $k$ is chosen large enough such that $|\mathcal{L}| \geq n^{2/3} \log n$, which will imply that the set $\mathcal{L}$ is hit by a random walk within approximately $n^{1/3}$ steps. The logarithmic factor is crucial, since it allows us to control probabilities uniformly via a union bound.

\begin{lemma} \label{L:cT_exc_dia_C1}
    Let $\Exc_{\max}$ and $\diam_{\max}$ be the maximum excess and diameter of the connected components in the \ErdosRenyi random graph $\mathbb{G}(n,1/n)$, respectively. Then, with $\bP$-probability at least $1 - n^{-1}$, we have
    \begin{equation*}
    \diam(\cT) \leq 2 \max_{u \in V} d_\cT(u, \mathcal{L})
    + 2 \max_{2 \leq i \leq k}  d_\cT(\cC_i, \cC_1) + 12( \Exc_{\max} + 2) \diam_{\max}.
\end{equation*}
\end{lemma}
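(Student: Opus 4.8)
The plan is to prove the bound deterministically on the event $E$ supplied by Lemma~\ref{L:UST_path_inside}, namely the event that no path of $\cT$ joining two vertices of a common component $\cC(u)$ uses an edge of weight $1$; since $\gamma \geq 5$, Lemma~\ref{L:UST_path_inside} gives $\bP_\cT(E) \geq 1 - n^{4-\gamma} \geq 1 - n^{-1}$. On $E$, every $\cT$-path between two vertices of $\cC_i$ uses only edges of $\cC_i$, so $T_i \coloneqq \cT \cap \cC_i$ is a spanning tree of the graph $\cC_i$ and $d_\cT$ restricted to $\cC_i$ agrees with $d_{T_i}$; set $D_i \coloneqq \max_{a,b \in \cC_i} d_\cT(a,b) = \diam(T_i)$. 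Since $\Exc(\cC_i) \leq \Exc_{\max}$ and $\diam(\cC_i) \leq \diam_{\max}$, Lemma~\ref{L:excess_diam} applied to the spanning tree $T_i$ of $\cC_i$ gives
\begin{equation*}
    D_i \leq 2(\Exc_{\max}+2)\diam_{\max} + \Exc_{\max} + 1 \leq 4(\Exc_{\max}+2)\diam_{\max},
\end{equation*}
where the second step is a one-line case distinction on whether $\diam_{\max} = 0$ (which forces all components to be isolated vertices, hence $\Exc_{\max} = -1$) or $\diam_{\max} \geq 1$.

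Next I would route every tree-path through $\cC_1$ by way of $\mathcal{L}$. Fix $u, v \in V$ and let $w_u, w_v \in \mathcal{L}$ be vertices of $\mathcal{L}$ that are $d_\cT$-closest to $u$ and $v$, lying in components $\cC_{i(u)}$ and $\cC_{i(v)}$ with $i(u), i(v) \leq k$. Applying the triangle inequality in the tree metric through the $\cC_1$-nearest points of $u$ and $v$ gives $d_\cT(u,v) \leq d_\cT(u,\cC_1) + D_1 + d_\cT(v,\cC_1)$, and passing from $u$ to $w_u \in \cC_{i(u)}$ and then from $\cC_{i(u)}$ to $\cC_1$,
\begin{multline*}
    d_\cT(u,\cC_1) \leq d_\cT(u,\mathcal{L}) + D_{i(u)} + d_\cT(\cC_{i(u)},\cC_1) \\
    \leq d_\cT(u,\mathcal{L}) + \max_{i \leq k} D_i + \max_{2 \leq i \leq k} d_\cT(\cC_i,\cC_1),
\end{multline*}
using that $d_\cT(\cC_{i(u)},\cC_1) = 0$ when $i(u) = 1$; the analogous bound holds with $v$ in place of $u$. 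Adding these two estimates and taking the maximum over $u,v$ yields
\begin{equation*}
    \diam(\cT) \leq 2 \max_{u \in V} d_\cT(u,\mathcal{L}) + 2 \max_{2 \leq i \leq k} d_\cT(\cC_i,\cC_1) + 2 \max_{i \leq k} D_i + D_1 .
\end{equation*}

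Finally I would substitute $D_i \leq 4(\Exc_{\max}+2)\diam_{\max}$ from the first step, so that $2\max_{i \leq k} D_i + D_1 \leq 12(\Exc_{\max}+2)\diam_{\max}$, which is exactly the asserted estimate. Beyond Lemma~\ref{L:UST_path_inside} there is no probabilistic content here; the only points requiring attention are (i) checking that on $E$ each $T_i$ is genuinely a spanning tree of the component graph $\cC_i$, so that Lemma~\ref{L:excess_diam} is applicable and $D_i = \diam(T_i)$, and (ii) keeping track of which component of $\mathcal{L}$ a nearest-point path actually lands in, so that the term $d_\cT(\cC_i,\cC_1)$ enters only under the maximum over $2 \leq i \leq k$. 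I do not expect either to be a real obstacle, so the full proof should be short.
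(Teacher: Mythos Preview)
Your proposal is correct and follows essentially the same route as the paper: both arguments invoke Lemma~\ref{L:UST_path_inside} to ensure (with $\bP$-probability at least $1-n^{-1}$) that $\cT$ restricts to a spanning tree of each component, then route every $\cT$-path from $u$ to $v$ through $\mathcal{L}$ and onward through $\cC_1$, collecting three ``inside-component'' diameter terms that are controlled by Lemma~\ref{L:excess_diam}. The only cosmetic difference is that the paper phrases the restriction step via the contraction to $G'$ and bounds inside-component distances by $\diam(T_{\textrm{d-max}}(\cC_i))$, whereas you work directly with the actual trees $T_i = \cT \cap \cC_i$; the arithmetic leading to the constant $12$ is the same.
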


\begin{proof}
    Any path inside $\cT$ between any two vertices $u,v$ can be decomposed into the union of
\begin{enumerate}
    \item a path from $u$ to $\mathcal{L}$;
    \item a path inside some component of $\mathcal{L}$ and a path from said component to $\mathcal{C}_1$;
    \item a path inside $\mathcal{C}_1$;
    \item a path from $\mathcal{C}_1$ to some component of $\mathcal{L}$ and a path inside said component;
    \item a path from $\mathcal{L}$ to $v$. 
\end{enumerate}
This argument gives us that for the UST on $G'$, the graph with the components contracted as in Section~\ref{SS:contract}, we have
\begin{align*}
    \diam(\cT) &\leq  2 \max_{u \in V} d_\cT(u, \mathcal{L})
    + 2 \Big( \max_{2 \leq i \leq k} \big( d_\cT(\cC_i, \cC_1) + \max_{x,y \in \cC_i} d_\cT(x,y) \big) \Big) + \max_{x,y \in \cC_1} d_\cT(x,y) \\
    &\leq  2 \max_{u \in V} d_\cT(u, \mathcal{L})
    + 2 \max_{2 \leq i \leq k}  d_\cT(\cC_i, \cC_1) + 3 \max_{\cC_i} \diam( T_{\textrm{d-max}} (\cC_i) ).
\end{align*}
We note that by Lemma~\ref{L:UST_path_inside}, considering this contraction gives a $\bP$-probability error term of at most $n^{-1}$. Now applying Lemma~\ref{L:excess_diam} gives that
\begin{equation*}
    \diam( T_{\textrm{d-max}} (\cC_i) ) \leq 2(\Exc(\cC_i) + 2) \diam(\cC_i) + \Exc(\cC_i) + 1,
\end{equation*}
from which the lemma follows after some simple algebraic manipulations.
\end{proof}

We now proceed to finish the proof of Theorem~\ref{T:diam_extreme}, where the main contribution of the bound in the diameter will come from the term $d_\cT(\cC_i, \cC_1)$ appearing in Lemma~\ref{L:cT_exc_dia_C1}.

\begin{proof}[Proof of the upper bound in Theorem~\ref{T:diam_extreme}]
Fix $\varepsilon > 0$.  We implicitly work on the graph $G'$ where each component is contracted arbitrarily as in \eqref{eq:Spatial_Markov_Tk}. Consider the following two (good) events that depend only on the randomness of the \ErdosRenyi random graph. For some $B, L> 0$ depending on $\varepsilon$, define:
\begin{align}
        \mathcal{G}_1 &\coloneqq \big\{ \max_{\cC_i} \diam(\cC_i) \leq B n^{1/3} \big\}, \\
        \mathcal{G}_2 &\coloneqq \big\{ \max_{\cC_i} \Exc(\cC_i) \leq L \big\}. \label{eq:max_excess_bounded}
\end{align}
On $\mathcal{G}_1$ and $\mathcal{G}_2$, Lemma~\ref{L:cT_exc_dia_C1} gives that
\begin{equation}
     \diam(\cT) \leq 2 \max_{u \in V} d_\cT(u, \mathcal{L}) \\
    + 2 \max_{2 \leq i \leq k} d_\cT(C_i, \cC_1) + 12 (L+2) B n^{1/3} \label{eq:core_upper_bound}
\end{equation}
with $\bP$-probability at least $1 - n^{-1}$. Formally, we should be multiplying the previous quantities by the indicators of these good events but, for the sake of readability, we refrain from doing so here as well as in the rest of the proof. As a result of Theorem~\ref{T:Nachmias_diam}, the event $\mathcal{G}_1$ occurs with probability at least $1 - \varepsilon/6$ if $B$ is chosen large enough. Furthermore, Theorem~1 in \cite{LPW94} states that the maximum excess of any component is bounded in probability, so that $\mathcal{G}_2$ occurs with probability at least $1 - \varepsilon/6$ if $L$ is large enough. It remains to bound the first two terms in \eqref{eq:core_upper_bound}.

Let 
\begin{equation} \label{eq:upper_bound_goal}
    U \coloneqq A n^{1/3} \log n \log \log n
\end{equation}
for some large $A > 0$ to be chosen later. A simple union bound directly yields that %\eqref{eq:core_upper_bound} gives
\begin{align}
    \bP_\cT \Big(\max_{u \in V} d_\cT (u, \mathcal{L}) > U \Big) &\leq \sum_{u \in V} \bP_\cT( d_\cT(u, \mathcal{L}) > U), \label{eq:upper_ineq_union_bound_1}\\
     \bP_\cT \Big(\max_{2 \leq i \leq k} d_\cT(\cC_i, \cC_1) > U \Big) &\leq \sum_{i=2}^k \bP_\cT (d_\cT(\cC_i, \cC_1) > U). \label{eq:upper_ineq_union_bound_2}
\end{align}
To bound from above the distance between $u$ (resp.\ $\cC_i$) and $\mathcal{L}$ (resp.\ $\cC_1$) we can use the Aldous-Broder algorithm and run a random walk started at the vertex in $G'$ corresponding to $u$ (resp.\ $\cC_i$) and stopped when it hits the vertices corresponding to the components in the definition of $\mathcal{L}$ (resp.\ the vertex corresponding to $\cC_1$). As our choice of starting vertex is not uniform, we start the random walk at index $0$ (instead of at index equal to $1$) with $X_0$ equal to the component $\cC(u)$ of $u$ (resp.\ $\cC_i$). We then let $(X_i)_{i \geq 1}$ be the vertices visited by this random walk after the first step, which by Lemma~\ref{L:couple_X_S}, we may couple to components $(S_i)_{i \geq 1}$ in $G$ obtained by independently sampling components proportional to their size under the measure $\bP_S(\cdot)$ as in Section~\ref{S:size_biased_GNP}. Define $t^u_{\mathcal{L}}$ (resp.\ $t^i_{\cC_1}$) as the first hitting time of the set corresponding to the components in $\mathcal{L}$ (resp.\ $\cC_1$). 
Our construction then gives that
\begin{equation*}
     d_\cT (u, \mathcal{L}) \leq t^u_{\mathcal{L}} + \diam \big( T_{\textrm{d-max}}(\cC(u)) \big) + \sum_{i=1}^{t^u_{\mathcal{L}}-1} \diam\big(T_{\textrm{d-max}}(S_i)\big), \\
\end{equation*}
where $S_i$ is the component corresponding to $X_i$. Similarly, we also obtain
\begin{equation*}
     d_\cT (\cC_i, \cC_1) \leq t^i_{\cC_1} + \diam \big( T_{\textrm{d-max}}(\cC_i) \big)+ \sum_{i=1}^{t^i_{\cC_1}-1}  \diam\big(T_{\textrm{d-max}}(S_i)\big).
\end{equation*}
By Lemma~\ref{AP_T:C_1_size} and Lemma~\ref{L:union_of_largest}, we have that
\begin{equation*}
    \frac{n}{|\mathcal{L}|} \leq \frac{n^{1/3}}{\log n} \quad \text{ and }   \quad  \frac{n}{|\cC_1|} \leq  C n^{1/3}
\end{equation*}
with probability at least $1 - \varepsilon/6$ provided that $C = C(\varepsilon) > 0$ is large enough. In particular, we deduce that the collections $(t^u_{\mathcal{L}} - 1)_{u \in V}$ and $(t^i_{\cC_1} - 1)_{2 \leq i \leq k}$ are stochastically bounded by two families of independent geometric random variables with means $n^{1/3}/\log n$ and $C n^{1/3}$, respectively. As one can easily verify by examining the cumulative distribution function of a geometric random variable, this implies that for some large constant $r = r(C(\varepsilon))> 0$ (independent of $n$) 
\begin{align*}
    \bP_\cT \big(  t^u_{\mathcal{L}}   > r n^{1/3} \big) &= o(n^{-1}), \\
    \bP_\cT \big( t^i_{\cC_1} > r n^{1/3} \log \log n \big) &= o \big( (\log n)^{-3} \big) = o(k^{-1}).
\end{align*}
% whenever $$ some large constant $r > 0$ (independent of $n$) that 
% \begin{align*}
%     \max_{u \in V }t^u_{\mathcal{L}} & \leq r n^{1/3}, \\
%     \max_{2 \leq i \leq k} t^i_{\cC_1} & \leq r n^{1/3} \log \log n
% \end{align*}
% with probability at least $1 - o(1)$. Therefore, with high probability each term in the union bounds of \eqref{eq:upper_ineq_union_bound_1} and \eqref{eq:upper_ineq_union_bound_2} may be upper bounded by
In view of \eqref{eq:upper_ineq_union_bound_1} and \eqref{eq:upper_ineq_union_bound_2} it therefore suffices to upper bound
\begin{equation}
    \bP_S\Big ( \sum_{i=1}^{r n^{1/3} \log \log n}  \diam\big(T_{\textrm{d-max}}(S_i)\big) > U - r n^{1/3} \log \log n - 3(L+2) Bn^{1/3} \Big), \label{eq:sum_components_diam}
\end{equation}
where we bounded the maximum diameter of the trees $T_{\textrm{d-max}}(\cdot)$ using Lemma~\ref{L:excess_diam}. As $U \gg n^{1/3} \log \log n$, we may also ignore the terms in the right hand side of \eqref{eq:sum_components_diam} by increasing the value of $A$ in the definition of $U$ in \eqref{eq:upper_bound_goal}.

In the following, we consider two more (good) events, depending only on the randomness of the \ErdosRenyi random graph, which will imply that \eqref{eq:sum_components_diam} becomes small, provided that $U$ is large enough. For some $D = D(\varepsilon)> 0$, consider the events:
\begin{align}
        %\mathcal{G}_3 &=\{ \bE_S\Big[ \sum_{i=1}^{r n^{1/3} \log \log n} \diam(S_i) \wedge ( B n^{1/3})  \Big] \leq D r n^{1/3} \log \log n \log n\} \label{eq:1st_moment_small}\\
        \mathcal{G}_3 &\coloneqq  \big\{ \bE_S\Big[ \diam(S_1) \wedge ( B n^{1/3})  \Big] \leq D \log n  \big\}, \label{eq:1st_moment_small}\\
        \mathcal{G}_4 &\coloneqq  \big\{ \bE_S\Big[ \Big( \diam(S_1) \wedge ( B n^{1/3}) \Big)^2 \Big] \leq D  n^{1/3}  \big\}, \label{eq:2nd_moment_small}
        %\mathcal{G}_4 &=\{ \bE_S\Big[ \Big( \sum_{i=1}^{r n^{1/3} \log \log n} \diam(S_i) \wedge ( B n^{1/3}) \Big)^2 \Big] \leq D^2 r^2 n^{2/3} (\log \log n)^2 (\log n)^2\} \\
        %
        %&= \{ \bE_S[  \diam(S_i) \wedge  (B n^{1/3}) ] \geq D \log n \}
\end{align}
where we recall that $B = B(\varepsilon) > 0$ is some large constant which comes from the event $\mathcal{G}_1$. Corollary~\ref{Cor:gen_moments} together with Markov's inequality gives that events $\mathcal{G}_3$ and $\mathcal{G}_4$ hold with probability at least $1 - 2 \varepsilon/6$ provided that $D = D(B(\varepsilon)) >0$ is large enough.

Let $R := r n^{1/3} \log \log n$ and consider the sum
\begin{equation*}
    \sum_{i=1}^{R}  \diam\big(T_{\textrm{d-max}}(S_i)\big)
\end{equation*}
from \eqref{eq:sum_components_diam}, assuming that the events $\mathcal{G}_i$, $i=1,2,3,4$ hold.  As the excess is not too large (under the event $\mathcal{G}_2$), applying Lemma~\ref{L:excess_diam} shows that this term can be upper bounded by
\begin{equation*}
    2(L+2) \sum_{i=1}^{R}  \diam\big(S_i\big) + (L+1)R .
\end{equation*}
Since the diameter of each component is not larger than $Bn^{1/3}$ (under the event $\mathcal{G}_1$), we may also equivalently bound this term by
\begin{equation} \label{eq:Exc_diam_upper_bound}
     2(L+2) \sum_{i=1}^{R}  \big( \diam (S_i) \wedge (Bn^{1/3}) \big) + (L+1) R.
\end{equation}
Next we define
\begin{equation*}
    \mathcal{X} :=  \sum_{i=1}^{R} (\diam(S_i) \wedge ( B n^{1/3})).
\end{equation*}
and claim that this random variable concentrates rather well. Indeed, since each term involved in the sum is bounded by $Bn^{1/3}$, Bernstein's inequality (see \cite[Theorem~2.8.4]{Ver18}) shows that there exists a constant $c >0$ such for any $t\geq 0$ 
\begin{equation*}
    \bP_S \big( \mathcal{X} - \bE_S[\mathcal{X}] \geq t  \big)  \leq \exp\Big( -c \frac{t^2}{R \cdot \bE_S \big[( \diam(S_1) \wedge (B n^{1/3})^2) \big] + t B n^{1/3}}\Big)
\end{equation*}
Hence, on the events $\mathcal{G}_3$ and $\mathcal{G}_4$, applying the above inequality with $t = D R \log n$ we get 
\begin{align}
     \bP_S \big( \mathcal{X} &\geq  2  D R \log n \big)  \nonumber \\
     & \leq  \bP_S \big( \mathcal{X} - \bE_S[\mathcal{X}] \geq    D R \log n \big) \nonumber \\
    & \leq \exp \Big( -c \frac{( D R \log n )^2}{ D R n^{1/3} +   (D R \log n) \cdot B n^{1/3} }  \Big). \label{eq:Bernstein_bound}
\end{align}
Notice that the exponent in \eqref{eq:Bernstein_bound} is at least $c' (\log n)(\log \log n)$ for some constant $c ' > 0$, and therefore 
\begin{equation} \label{eq:poly_upper_sum_bound}
     \bP_S( \mathcal{X} \geq  2  D R \log n) = O(n^{-2}).
\end{equation}
We conclude the proof by summarizing and combining the bounds that we have obtained so far. Recall that $R = r n^{1/3} \log \log n$. Now let $A = 6D(L+2) r $ so that $U$ from \eqref{eq:upper_bound_goal} satisfies
\begin{equation*}
    \frac{U - (L+1)R}{2(L+2)} = \frac{6D(L+2) R \log n - (L+1)R}{2(L+2)}> 2DR \log n
\end{equation*}
for $n$ large enough. Under the events $\mathcal{G}_i$, $i=1,2,3,4$, 
the bounds from \eqref{eq:sum_components_diam}, \eqref{eq:Exc_diam_upper_bound} and \eqref{eq:poly_upper_sum_bound} yield
\begin{align*}
    \bP_\cT \big( d_\cT(u, \mathcal{L}) > U \big) &\leq \bP_S \Big( \sum_{i=1}^{R} \diam(S_i) \wedge ( B n^{1/3}) > \frac{U - (L+1)R }{2(L+2)} \Big) +o(n^{-1})\\
    &= o(n^{-1}).
\end{align*}
Similarly, the distances $d_{\cT}(\cC_i, \cC_1)$ are larger than $U$ with $\bP$-probability at most $o( k^{-1})$. The inequality in \eqref{eq:core_upper_bound}, and the union bounds in \eqref{eq:upper_ineq_union_bound_1} and \eqref{eq:upper_ineq_union_bound_2}, then give that with $\bP$-probability at least $1 - o(1)$
\begin{equation*}
    \diam(\cT) \leq 4 U + 12(L+2) Bn^{1/3} \leq 5 U
\end{equation*}
for large enough $n$. The proof is completed by noting that the intersection of the events $\mathcal{G}_i$, $i=1,2,3,4$, as well as the event $\{ \mathcal{L} \geq n^{1/3} \log n \}$ and other high probability events, % and those that lead up to the bound on the hitting times before \eqref{eq:sum_components_diam},
occur with probability at least $1 - 5\varepsilon/6 - o(1)$.
\end{proof}

\section{Small and intermediate \texorpdfstring{$\gamma$}{gamma}} \label{S:small_gamma}

In this final section, we show that when $\gamma < 0$ the diameter of the spanning tree is of the same order as that of the unweighted UST. Furthermore, we state a conjecture about the diameter in the case of small $\gamma \geq 0$. Namely, we expect the diameter to be a power law with an exponent decreasing as $\gamma$ increases. 

However, before treating the $\gamma < 0$ case, we give a brief summary of the tools developed for the UST in \cite{MNS21}, which form the basis of several arguments in \cite{MSS23, MSS24}. Theorem~1.1 of \cite{MNS21} gives 3 conditions on the graph and the corresponding random walk that, when satisfied, guarantee that the unweighted UST has a diameter of order $\sqrt{|V|}$ with probability $1-\varepsilon$ (the exact constants of the bound depend on $\varepsilon$). In \cite[Theorem~2.3]{MSS23}, the authors slightly generalized this result to weighted graphs, and applied it to some instances of a random spanning tree in random environment. More specifically, to prove that the diameter is of order $\sqrt{|V|}$, it suffices to show that the balanced, mixing, and escaping conditions (see \cite[(2.9)--(2.11)]{MSS23} or \cite[Section 1.1]{MNS21}) are verified for the (randomly) weighted graph $(G, \weight)$ with some constants independent of $n$, and with large enough probability. 

The balanced condition states that for some $D > 0$
    \begin{equation}
        \frac{\max_{u \in V} \pi(u)}{\min_{v \in V} \pi(v)} = \frac{\max_{u \in V} \sum_{x \neq u} \weight(u,x)}{\min_{v \in V} \sum_{v \neq u} \weight(v,x)} \leq D, \label{eq:bal}
    \end{equation}
    where $\pi(v) = \sum_{x \neq u} \weight(u,x) / 2\sum_{e \in E} \weight(e)$ is the stationary distribution of the random walk. For the mixing and escaping conditions, as demonstrated in \cite[Section 3.2]{MSS24}, it suffices to show that for some fixed $M > 0$ we have the following bound on the \textit{bottleneck ratio}:
    \begin{equation}
        \Phi_{(G,\weight)} = \min\limits_{0 < \pi(S) \leq 1/2 }\Phi(S) := \frac{\sum_{x \in S, y \in S^c} \pi(x) p(x,y)}{\pi(S)} = \frac{\sum_{e \in E(S,S^c)} \weight(e)}{2\sum_{e \in E(S,V)} \weight(e)} \geq M, \label{eq:bottle_neck}
    \end{equation}
    where $E(A,B)$ are the set of edges between $A$ and $B$, and $\pi(S) = \sum_{v \in S} \pi(v)$. We refer to \cite[Section 2]{MSS24} for more details about the bottleneck ratio. We shall apply this method to prove the following proposition, omitting some details in the argument.

\begin{proposition} \label{P:negative_gamma}
    Let $\varepsilon > 0$. There exists a constant $A = A(\varepsilon) > 0$ such that for any $\gamma < 0$ there is an $n_0 = n_0(\gamma, \varepsilon)$ with
    \begin{equation*}
        \E\Big[ \bP_\cT \big( A^{-1} \sqrt{n} \leq \diam(\cT) \leq A \sqrt{n} \big) \Big] \geq 1 - \varepsilon
    \end{equation*}
    for all $n \geq n_0$.
\end{proposition}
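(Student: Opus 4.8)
The plan is to apply the criterion of \cite[Theorem~1.1]{MNS21} in the weighted form of \cite[Theorem~2.3]{MSS23}: it suffices to exhibit a (deterministic) event on the random environment $(\weight(e))_{e}$, of probability at least $1-\varepsilon/2$ for $n\ge n_0(\gamma,\varepsilon)$, on which the balanced condition \eqref{eq:bal} holds with a constant $D$ and the bottleneck ratio satisfies $\Phi_{(G,\weight)}\ge M$ with a constant $M$, both independent of $n$. On that event the cited theorem yields $\bP_\cT\big(A^{-1}\sqrt n\le\diam(\cT)\le A\sqrt n\big)\ge 1-\varepsilon/2$ for a suitable $A=A(D,M,\varepsilon)$, and the claim follows by taking expectations.

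Couple the environment to $\mathbb{G}(n,1/n)$ as in Section~\ref{SS:ER-graphs}, retaining the edges of weight $n^{1+\gamma}$, and write $d^H_v$ for the degree of $v$ in this ``heavy'' graph and $h=h(n)$ for its number of edges. Then $\sum_{x\ne v}\weight(v,x)=(n-1)+d^H_v\,(n^{1+\gamma}-1)$, so that
\[
  \frac{\max_v\sum_{x}\weight(v,x)}{\min_v\sum_{x}\weight(v,x)}\ \le\ \frac{n-1+d^H_{\max}\,|n^{1+\gamma}-1|}{n-1}.
\]
Since $\mathbb{G}(n,1/n)$ has $d^H_{\max}=O(\log n/\log\log n)$ with high probability and $\gamma<0$ forces $n^{1+\gamma}\log n=o(n)$, the right-hand side is $1+o(1)$; in particular \eqref{eq:bal} holds with $D=2$ once $n\ge n_0(\gamma)$, and moreover $\sum_x\weight(v,x)=n(1+o(1))$ uniformly in $v$, so that $\pi$ is within a factor $1+o(1)$ of the uniform distribution and hence any $S$ with $\pi(S)\le 1/2$ satisfies $|S|\le(1/2+o(1))n$.

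For the bottleneck ratio, fix such an $S$ and write $\Phi(S)=w(\partial S)/\big(2w(S)+w(\partial S)\big)$, where $w(\partial S)$ is the total weight of the edges between $S$ and $S^c$ and $w(S)$ that of the edges inside $S$. Bounding the cut below by its light edges, $w(\partial S)\ge|S|(n-|S|)-h\ge\tfrac13|S|n$ for $n$ large, using $h=O(n)$ with high probability (and, for the finitely many $S$ with $|S|$ below an absolute constant, the trivial estimates $w(S)=o(n)$, $w(\partial S)=\Theta(n)$, which give $\Phi(S)\to 1$). For the interior, $w(S)\le\binom{|S|}{2}+n^{1+\gamma}\sum_i|E(\cC^H_i[S])|$, the sum being over the heavy components $\cC^H_i$. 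Passing to a spanning tree of $\cC_i^H$ plus its $\Exc(\cC_i^H)+1$ surplus edges, an induced subgraph $\cC^H_i[S]$ with $m_i\ge1$ components has $|E(\cC^H_i[S])|\le|\cC^H_i\cap S|-m_i+\Exc(\cC^H_i)+1\le|\cC^H_i\cap S|+\Exc(\cC^H_i)$; summing and using that the total number of surplus edges of $\mathbb{G}(n,1/n)$ is bounded in probability (a standard feature of the critical window; cf.\ the excess bounds of \cite{LPW94} used in Section~\ref{S:upper_bound}), one gets $\sum_i|E(\cC^H_i[S])|\le|S|+L'$ for a constant $L'=L'(\varepsilon)$, with probability at least $1-\varepsilon/4$. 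Hence $2w(S)\le|S|^2+2n^{1+\gamma}(|S|+L')=|S|^2+o(n|S|)\le(1+o(1))|S|n$, since $\gamma<0$, and therefore $\Phi(S)\ge\tfrac13|S|n\,/\,(\tfrac43+o(1))|S|n\ge\tfrac14$ for $n$ large; as this is uniform in $S$, \eqref{eq:bottle_neck} holds with $M=1/4$. Intersecting the finitely many high-probability environment events used ($d^H_{\max}=O(\log n/\log\log n)$, $h=O(n)$, bounded total surplus), each arranged to have probability at least $1-\varepsilon/8$ for $n\ge n_0(\gamma,\varepsilon)$, we obtain that \eqref{eq:bal} and \eqref{eq:bottle_neck} hold with $D=2$, $M=1/4$ with probability at least $1-\varepsilon/2$, which finishes the argument.

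The step requiring the most care is the interior estimate in the bottleneck ratio: one must exploit that the heavy graph is \emph{sparse} --- only $\Theta(n)$ edges, lying in components of bounded total surplus --- so that $\sum_i|E(\cC^H_i[S])|$ never exceeds $|S|+O(1)$, which is precisely what keeps the heavy contribution $n^{1+\gamma}(|S|+O(1))$ negligible against the light cut weight $\Theta(n|S|)$ when $\gamma<0$ (and it is this comparison that breaks for $\gamma\ge 0$).
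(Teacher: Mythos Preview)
Your proof is correct and follows essentially the same approach as the paper: verify the balanced and bottleneck conditions for \cite[Theorem~2.3]{MSS23} via the maximum-degree bound and an excess/surplus estimate on the heavy graph $\mathbb{G}(n,1/n)$. The only differences are cosmetic---you bound the heavy interior by the \emph{total} surplus rather than the \emph{maximum} excess (both $O_{\P}(1)$ at criticality), and your cut bound $w(\partial S)\ge |S|(n-|S|)-h$ is robust to $\gamma<-1$ where the paper's ``$\weight(e)\ge 1$'' shortcut would need a small patch; note also that $(1/3)/(4/3+o(1))$ falls just short of $1/4$, so write $M=1/5$ to be safe.
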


\begin{proof}
    Let $\varepsilon > 0$. The degree of a vertex in $\mathbb{G}(n,1/n)$ is distributed as a $\text{Bin}(n-1,1/n)$ random variable, so that combining a union bound together with a Chernoff-type estimate gives that the maximum degree in $\mathbb{G}(n,1/n)$ is bounded by, say, $10 \log n$ with probability $1 - O(n^{-2})$. The inequality in \eqref{eq:bal} is then, for $n$ large enough, satisfied with
    \begin{equation*}
        D = \frac{n-1 + 10 n^{1+\gamma} \log n}{n-1} \leq 2.
    \end{equation*}
    Consider $S$ with $0 < \pi(S) \leq 1/2$. Using that $\weight(e) \geq 1$ and that $x/(x+a)$ is a non-decreasing function in $x$ whenever $a \geq 0$, we obtain
    \begin{align}
        \Phi(S) &= \frac{\sum_{e \in E(S,S^c)} \weight(e)}{2\big( \sum_{e \in E(S,S^c)} \weight(e) + \sum_{e \in E(S,S)} \weight(e) \big)} \geq \frac{|E(S,S^c)|}{2 \big( |E(S,S^c)| + \sum_{e \in E(S,S)} \weight(e) \big)} \nonumber \\
        &\geq \frac{|E(S,S^c)|}{2 \big( |E(S,S^c)| + |E(S,S)| + n^{1+\gamma} \sum_{e \in E(S,S)} \mathbbm{1}_{\{ \weight(e) = n^{1+\gamma}\}} \big)} \nonumber \\
        &\geq\frac{|S|(n-|S|)}{2 \big( |S|(n-|S|) + |S|^2/2 + n^{1+\gamma} |E(S,S) \cap E(\mathbb{G}(n,1/n))|}. \label{eq:bound_bottle}
    \end{align}
    To further bound the bottleneck ratio of $S$, it therefore suffices to upper bound the number of edges in the subgraph (which we shall call $G|_S$) of $\mathbb{G}(n, 1/n)$ induced by the vertex set $S$. To this end, recall that $\Exc(H) = |E(H)| - |V(H)|$ was defined to be the \textit{excess} of a graph $H$, and let $S_1, \ldots, S_k$ be the connected components in $G|_S$. Then
    \begin{align*}
        |E(S,S) \cap E(\mathbb{G}(n,1/n))| &= |E(G|_S)| = \sum_{i=1}^k |E(S_i, S_i)| \\
        &= \sum_{i=1}^k \Exc(S_i) + |V(S_i)| \\
        &\leq \max\limits_{j=1,\ldots,k} ( \Exc(S_j) \vee 2) \sum_{i=1}^k |V(S_i)| \\
        &= \max\limits_{j=1,\ldots,k} ( \Exc(S_j) \vee 2) |S|,
    \end{align*}
    where we take the maximum of the excess with $2$ to ensure that the inequality $a + b \leq ab$ holds (the excess may be equal to $-1,0$ or $1$). Now note that 
    \begin{equation*}
        \max\limits_{j=1,\ldots,k} \Exc(S_j) \leq \max\limits_{i} \Exc(\cC_i),
    \end{equation*}
    as any (non-empty) induced subgraph of a connected graph cannot have a larger excess (adding a vertex to the induced subgraph would add at least one edge as well) than the original graph. By Theorem~1 of \cite{LPW94} (see also equation \eqref{eq:max_excess_bounded}), the maximum excess of a critical \ErdosRenyi random graph is bounded in probability, i.e., there exists a $B = B(\varepsilon)$ such that, with probability at least $1 - \varepsilon/3$, the maximum excess is bounded by $B$. From \eqref{eq:bound_bottle} we then obtain that
    \begin{equation}
        \Phi(S) \geq \frac{|S|(n-|S|)}{2 \big( |S|(n-|S|) + |S|^2/2 + B n^{1+\gamma} |S|\big)} \label{eq:bottle_exces}
    \end{equation}

    To finish the proof, it suffices to control the size of $S$. Since $\pi(S^c) \geq 1/2$ and
    \begin{equation*}
         |S^c| \geq \pi(S^c) \big(\max_{v \in V}\pi(v)\big)^{-1} \geq \frac{1}{2} \cdot \frac{2 |E(K_n)|}{n-1 + 10 n^{1+\gamma} \log n},
    \end{equation*}
    where we used the argument above about the maximum degree in $\mathbb{G}(n,p)$ that holds with large enough probability,  we have 
    \begin{equation*}
        |S| = n - |S^c| \leq \big( 1 - \frac{1}{2 + 20 n^\gamma \log n} \big) (n-1) \leq \frac{2n}{3}
    \end{equation*}
    whenever $n$ is large enough. With \eqref{eq:bottle_exces} this gives that
    \begin{align*}
        \Phi_{(G,\weight)} = \min\limits_{0 < \pi(S) \leq 1/2} \Phi(S) &\geq \min\limits_{0 < |S| \leq 2n/3} \frac{n - |S|}{2 \big( n-|S|/2 + B n^{1+\gamma}  \big)} \\
        &\geq \frac{1}{6\big( 2/3 + B n^{\gamma}\big)},
    \end{align*}
    where the minimum is achieved by a set with maximum size equal to $2n/3$. When $\gamma < 0$, the bottleneck ratio is thus bounded below by a constant as required in \eqref{eq:bottle_neck}. To conclude, there exists an event $\mathcal{A}$ %(corresponding to a bound on the maximum degree in $\mathbb{G}(n,p)$ and a bound on the maximum excess) 
    that occurs with probability at least $1 - \varepsilon/2$ such that the conditions required of Theorem~2.3 in \cite{MSS23} (with the bottleneck ratio argument stated before the proposition in mind) are satisfied. Applying Theorem~2.3 in \cite{MSS23} on the event $\mathcal{A}$ with $\varepsilon' = \varepsilon/2$ completes the proof.
\end{proof}

Theorem~\ref{T:diam_extreme} and Proposition~\ref{P:negative_gamma} cover the case $\gamma \not\in [0,5)$ of the weights introduced in \eqref{eq:weight_dist}. It therefore remains open what occurs in the intermediate regime of $\gamma \in [0,5)$, where we expect an interesting behavior to emerge when $\gamma$ is close to $0$. We conjecture the following. 

\begin{conjecture} \label{C:small_gamma}
    There exists constants $c_1, c_2, \gamma^*$ such that for any $\epsilon > 0$, $\gamma \leq \gamma^*$ and $n \geq n_0(\varepsilon)$, we have
    \begin{align}
        \E\Big[ \bP_\cT \big( n^{1/2 - c_1 \gamma} \leq \diam(\cT) \leq n^{1/2 - c_2 \gamma} \big) \Big] \geq 1 - \varepsilon.
    \end{align}
\end{conjecture}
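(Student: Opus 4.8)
We only outline a plausible strategy, since this is a conjecture. Two features distinguish this regime from $\gamma\ge 5$: Lemma~\ref{L:UST_path_inside} fails for $\gamma\le\gamma^*<4$, so $\cT$ is \emph{not} supported on spanning forests of the heavy graph, and the random walk on $(G,\weight)$ is no longer trapped inside \ErdosRenyi components. Kirchhoff's formula suggests the quantitative picture to aim for: a heavy edge fails to lie in $\cT$ with probability of order $n^{-\gamma}$ (the parallel light two-paths contribute conductance of order $n$ against conductance $n^{1+\gamma}$ for the heavy edge), so a typical component of size $m$ is broken by $\cT$ into of order $1+mn^{-\gamma}$ pieces of size of order $n^{\gamma}$, reconnected through light edges. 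The heavy edges thus partially rigidify the tree and shorten its longest paths, an effect increasing with $\gamma$: one expects the diameter to interpolate between order $n^{1/2}$ at $\gamma=0$ (the order predicted, in the spirit of Proposition~\ref{P:negative_gamma}) and the $n^{1/3}\log n$ of Theorem~\ref{T:diam_extreme}, reached at some $\gamma$ of order one. In particular $\gamma^*$ should be of order one, and in any case small enough that $1/2-c_1\gamma>1/3$.

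For the lower bound the plan is to follow Section~\ref{S:lower_bound}: conditionally on which heavy edges lie in $\cT$ --- a random forest $\mathcal F$ of the heavy graph --- use the Spatial Markov property to contract the trees of $\mathcal F$ and run a random walk on the resulting graph via Aldous--Broder. The components visited at the successive ``escape'' steps of the walk should still be the size-biased sequence $(S_i)$ of Section~\ref{S:size_biased_GNP} (an escape lands on an essentially uniform fresh vertex), so Theorem~\ref{T:repeat_times} and Lemma~\ref{L:repeat_time} would still give of order $n^{1/3}$ distinct visited components before a repeat, and $\cT$ would contain a path concatenating their traversals. The new point is the typical contribution of one such traversal: it involves the displacement of the component-restricted walk run for of order $n^{\gamma}$ steps --- a subdiffusive walk on a critical, tree-like component --- together with the effect of the break-up of the component by $\cT$. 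A truncated first/second-moment (Paley--Zygmund) argument, as in the proof of the lower bound of Theorem~\ref{T:diam_extreme}, should then yield $\diam(\cT)\ge n^{1/2-c_1\gamma}$, once this per-traversal contribution is shown to be of the appropriate polynomial order.

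The upper bound would reuse the scheme of Section~\ref{S:upper_bound}: decompose an arbitrary path in $\cT$ via Lemma~\ref{L:cT_exc_dia_C1} into two excursions reaching the set $\mathcal L$ of the $\lceil(\log n)^3\rceil$ largest components (see~\eqref{eq:union_largest}) and a path inside $\mathcal L$; the latter is $O(n^{1/3})$ by the bounded-excess bound of~\cite{LPW94} and Theorem~\ref{T:Nachmias_diam}, hence subleading precisely because $\gamma\le\gamma^*$. The excursions to $\mathcal L$ would be controlled by running the (now untrapped) coarse-grained walk, applying a Bernstein-type concentration inequality to the sum of the contributions of the visited components exactly as around~\eqref{eq:Bernstein_bound}, and union-bounding over starting vertices (again at the cost of a $\log\log n$-type factor). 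One must additionally check that the partial break-up of the large components does not inflate $\mathcal L$-to-vertex distances beyond $n^{1/2-c_2\gamma}$; this should reduce to the moment estimates of Corollary~\ref{Cor:gen_moments}, with the component diameter replaced by the displacement of the trapped walk.

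The main obstacle is making this heuristic rigorous: one needs quantitative control of the UST inside a fixed \ErdosRenyi component when heavy and light edges compete with weight ratio $n^{\gamma}$ --- effectively a local large-deviation statement for the number and location of the heavy edges of $\cC_i$ that $\cT$ omits, refining the Kirchhoff bound behind Lemma~\ref{L:UST_path_inside} --- together with a matching estimate for the displacement of the trapped random walk. Since these estimates are only expected to be tight up to $n^{O(\gamma)}$ factors, the resulting lower and upper exponents will not coincide, which is why the conjecture is phrased with two distinct constants $c_1\neq c_2$. A secondary difficulty is uniformity: $c_1,c_2$ and the hidden prefactors must be controlled uniformly in $\gamma\le\gamma^*$, forcing every step above to be carried out with $\gamma$-uniform constants up to the threshold.
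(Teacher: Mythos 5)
The statement you are addressing is a \emph{conjecture}; the paper does not prove it, and the authors' own remark following it indicates they regard the upper bound as genuinely open. Your proposal is therefore correctly framed as an outline, and you correctly identify the source of difficulty: Lemma~\ref{L:UST_path_inside} fails for small $\gamma$, so $\cT$ is no longer supported on spanning forests of the heavy graph and the component decomposition of Sections~\ref{S:lower_bound}--\ref{S:upper_bound} breaks down. Your Kirchhoff heuristic that a heavy edge is omitted with probability of order $n^{-\gamma}$ is also right (parallel light two-paths give conductance of order $n$ against $n^{1+\gamma}$).

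Where your proposal diverges from what the paper actually establishes is the lower bound. You propose replaying the Section~\ref{S:lower_bound} machinery: condition on the random forest $\mathcal F$ of heavy edges in $\cT$, contract $\mathcal F$ via the Spatial Markov property, and run Aldous--Broder on the coarse-grained graph, hoping that the ``escape'' components are again the size-biased sequence $(S_i)$. This is considerably heavier than necessary and has a real obstacle: after contracting only the \emph{random} subforest $\mathcal F$ (rather than spanning trees of full components, as in Section~\ref{SS:contract}), the heavy edges not in $\mathcal F$ remain, so the transition probabilities of the coarse-grained walk do not reduce to the simple size-biased form $|\cC_B|/n$ of Lemma~\ref{L:couple_X_S}, and Theorem~\ref{T:repeat_times} does not apply as stated. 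The paper's remark following the conjecture takes a different and much cheaper route: the bounds on $D$ and $\Phi_{(G,\weight)}$ extracted from the proof of Proposition~\ref{P:negative_gamma} (namely $D \le C n^{\gamma}\log n$ and $\Phi_{(G,\weight)} \ge C n^{-2\gamma}(\log n)^{-1}$) feed directly into Theorem~2.3 of \cite{MSS23} to give $\diam(\cT) \ge A(\varepsilon)\, n^{1/2-c\gamma}$ with high probability, bypassing Aldous--Broder and repeat times entirely. That is the route you should take for the lower bound; it also automatically has the $\gamma$-uniformity you flag as a worry.

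For the upper bound you correctly identify that it is the open direction, and your diagnosis of the core obstacle --- a quantitative, $\gamma$-uniform control on how the UST fragments a single critical component when heavy and light edges compete at ratio $n^{\gamma}$, together with displacement estimates for the trapped walk --- matches the authors' own assessment. Nothing in your outline resolves that obstacle, and you acknowledge as much, so there is no gap to point to beyond what is already inherent in the problem. The one concrete improvement to make is to drop the size-biased/contraction machinery for the lower bound in favor of the bottleneck-ratio argument the paper already supplies, and to concentrate the effort where it is actually needed: an upper-bound analogue of Corollary~\ref{Cor:gen_moments} that accounts for the partial break-up of components, which is what a Bernstein-type bound in the spirit of \eqref{eq:Bernstein_bound} would require.
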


\begin{remark}
    The proof of the Proposition~\ref{P:negative_gamma} shows that for $\gamma \geq 0$ there exists a constant $C > 0$ such that we have the bounds
 \begin{equation*}
     D \leq C n^{\gamma} \log n \qquad \text{and} \qquad  \Phi_{(G,\weight)} \geq C n^{-2 \gamma} (\log n)^{-1}.
 \end{equation*}
 In particular, applying Theorem~2.3 of \cite{MSS23} shows that there exists a constant $c > 0$ such that for small $\gamma \geq 0$ and any $\varepsilon > 0$, one has with probability at least $1 - \varepsilon$ that
    \begin{equation*}
        \diam(\cT) \geq A(\varepsilon) n^{1/2 - c \gamma },
    \end{equation*}
    for some constant $A(\varepsilon)$ depending only on $\varepsilon$. The difficulties in proving Conjecture~\ref{C:small_gamma} therefore reduces to showing a suitable upper bound for the diameter.
\end{remark}

\bibliographystyle{plain}
\bibliography{RSTRE}

\end{document}